\title[Second-order consensus models with a bonding force]{Collective behaviors of second-order nonlinear consensus models with a bonding force}
\author[Ahn]{Hyunjin Ahn}
\address[Hyunjin Ahn]{\newline Research institute of Mathematics\newline Seoul National University, Seoul 08826, Republic of Korea}
\email{yagamelaito@snu.ac.kr}
\author[Byeon]{Junhyeok Byeon}
\address[Junhyeok Byeon]{\newline Department of Mathematical Sciences\newline Seoul National University, Seoul 08826, Republic of Korea}
\email{giugi2486@snu.ac.kr}
\author[Ha]{Seung-Yeal Ha}
\address[Seung-Yeal Ha]{\newline Department of Mathematical Sciences and Research Institute of Mathematics \newline Seoul National University, Seoul 08826, Republic of Korea} 
\email{syha@snu.ac.kr}
\author[Yoon]{Jaeyoung Yoon}
\address[Jaeyoung Yoon]{\newline Department of Mathematical Sciences\newline Seoul National University, Seoul 08826, Republic of Korea}
\email{jyoung924@snu.ac.kr}
\newtheorem{theorem}{Theorem}[section]
\newtheorem{lemma}{Lemma}[section]
\newtheorem{corollary}{Corollary}[section]
\newtheorem{proposition}{Proposition}[section]
\newtheorem{remark}{Remark}[section]
\newtheorem{definition}{Definition}[section]
\newcommand{\bbr}{\mathbb R}
\newcommand{\bbs}{\mathbb S}
\newcommand{\bx}{\mbox{\boldmath $x$}}
\newcommand{\by}{\mbox{\boldmath $y$}}
\newcommand{\bv}{\mbox{\boldmath $v$}}
\newcommand{\bbf}{\mbox{\boldmath $f$}}
\newcommand{\bc}{\mbox{\boldmath $c$}}
\newcommand{\bn}{\mbox{\boldmath $n$}}
\newcommand{\br}{\bold r}
\def\charf {\mbox{{\text 1}\kern-.30em {\text l}}}
\newcommand{\newparallel}{\mathrel{\mathpalette\new@parallel\relax}}
\newcommand{\new@parallel}[2]{%
  \begingroup
  \sbox\z@{$#1T$}
  \resizebox{!}{\ht\z@}{\raisebox{\depth}{$\m@th#1/\mkern-5mu/$}}%
  \endgroup
}
\begin{document}
	\tikzstyle{block} = [rectangle, draw, 
	text width=15em, text centered, rounded corners, minimum height=3em]
	\tikzstyle{line} = [draw, -latex']

	\date{\today}
	
	\subjclass[2010]{34D06, 70F10, 70G60, 92D25}
	\keywords{Babalat's lemma, bonding control, complete synchronization, Cucker-Smale model, flocking, Kuramoto model}
	
	\thanks{Acknowledgment: The work of H. Ahn was supported by NRF-2022R1C12007321 and the work of S.-Y. Ha was supported by NRF-2020R1A2C3A01003881.}
	
	\begin{abstract}
		We study the collective behaviors of two second-order nonlinear consensus models with a bonding force, namely the Kuramoto model and the Cucker-Smale model with inter-particle bonding force. The proposed models contain feedback control terms which induce collision avoidance and emergent consensus dynamics in a suitable framework. Through the cooperative interplays between feedback controls, initial state configuration tends to an ordered configuration asymptotically under suitable frameworks which are formulated in terms of system parameters and initial configurations. For a two-particle system on the real line, we show that the relative state tends to the preassigned value asymptotically, and we also provide several numerical examples to analyze the possible nonlinear dynamics of the proposed models, and compare them with analytical results. 
 \end{abstract}
	
	\maketitle
	
	\centerline{\date}
	
\tableofcontents


	\section{Introduction} \label{sec:1}
	\setcounter{equation}{0}
	Collective behaviors of many-body systems are ubiquitous in nature, to name a few, aggregation of bacteria \cite{T-B}, flocking of birds \cite{C-Sm,T-T,VZ}, synchronization of pacemaker cells and fireflies \cite{A-B,B-C-M1,B-C-M,B-D-P,B-B,C-S,Ku2,Pe,P-R,St,Wi1} and swarming of fish \cite{D-M2,D-M3,D-M1}, etc. Among them, we are mainly interested in two collective behaviors ``{\it synchronization}" and ``{\it flocking}".  Synchronization denotes an adjustment of rhythms of weakly coupled limit-cycle oscillators, whereas flocking represents a collective behavior in which particles move with a common velocity by using limited environmental information and simple rules. These collective behaviors were extensively studied via the particle models, ``the Kuramoto model" and ``the Cucker-Smale model" in literature. Moreover, for one-dimensional case the aforementioned models can be integrated into the common first-order nonlinear consensus model with different coupling functions (see \cite{H-K-P-Z-0,H-L-R-S,H-P-Z}). In this work, we are interested in the second-order nonlinear consensus models incorporating bonding control. To set up the stage, we first begin with a brief description on the aforementioned models one by one.
	
Consider a finite ensemble of weakly coupled Kuramoto oscillators whose states are represented by the real-valued ``{\it phase function}". In fact, Kuramoto oscillators can be visualized as rotators moving around the unit circle $\bbs^1$. Let $\theta_i = \theta_i(t)$ and $\omega_i = \omega_i(t)$ be the phase and frequency (phase velocity) of the $i$-th Kuramoto oscillator, respectively. Then, the second-order Kuramoto model \cite{H-L-R-S} reads as: for any $t>0$ and $i\in[N]:=\{1,\cdots,N\}$,
\begin{equation} \label{Ku-second}
\begin{cases}
\displaystyle {\dot \theta}_i = \omega_i, \quad t > 0, \quad  i \in [N], \\
\displaystyle {\dot \omega}_i =  \frac{\kappa_0}{N} \sum_{j=1}^{N} \cos(\theta_j - \theta_i)(\omega_j - \omega_i),
\end{cases}
\end{equation}
where $\kappa_0$ is a nonnegative coupling strength. For the special set of constrained initial data, the system \eqref{Ku} is equivalent to the first-order Kuramoto model (see Proposition \ref{L2.1}):
\begin{equation} \label{Ku}
{\dot \theta}_i = \nu_i  + \frac{\kappa_0}{N} \sum_{j=1}^{N} \sin(\theta_j - \theta_i), \quad t > 0, \quad  i \in [N].
\end{equation}
The emergent dynamics of the second-order model \eqref{Ku-second} has been studied in \cite{H-L-R-S} only for a restricted class of initial data, whereas the emergent dynamics of the first-order model \eqref{Ku} has been extensively studied from diverse perspectives, e.g., complete synchronization \cite{B-D-P,C-H-J-K,C-S,D-X,H-K-R}, critical coupling strength \cite{D-B},  uniform mean field limit \cite{H-K-P-Z,La}, gradient flow formulation \cite{V-W}, discretized model \cite{H-S-Y,S}, kinetic Kuramoto model \cite{B-C-M1,B-C-M,C-C-H-K-K}, etc.  
	
Next, we consider a finite ensemble of Cucker-Smale flocking particles whose mechanical states are given by position and velocity. More precisely, let $\bx_i$ and $\bv_i$ be the position and velocity of the $i$-th Cucker-Smale particle in $\bbr^d$. Then, the Cucker-Smale (in short CS) model \cite{C-Sm} is governed by the following dynamical system:
\begin{equation}
\begin{cases} \label{CS}
\displaystyle {\dot \bx}_i =  \bv_i,  \quad t >0, \quad  i \in [N], \\
\displaystyle {\dot \bv}_i  = \frac{\kappa_0}{N}\sum_{j=1}^N \psi(\|\bx_j-\bx_i\|) ( \bv_j - \bv_i ),
\end{cases}
\end{equation}
where $\|\cdot\|$ is the standard $\ell^2$-norm in $\mathbb{R}^d$. Similar to the Kuramoto model \eqref{Ku}, the CS model \eqref{CS} was also extensively studied from diverse perspectives in a half century. We refer to \cite{A-B-F,C-D-H,C-F-R-T,C-H-L,H-Liu,H-T,M-T,Sh}  and references therein. In this paper, we address the following simple questions:
\begin{itemize}
		\item
		(${\mathcal Q}_A$):~Can we design a bonding force for \eqref{Ku-second} and \eqref{CS} which makes ensemble form organized spatial patterns?
		\vspace{.1cm}
		\item
		(${\mathcal Q}_B$):~If so, under what conditions on system parameters and initial configurations, do the Kuramoto and CS ensembles with inter-particle bonding force exhibit collective self-organized  behaviors?
	\end{itemize}
 For the CS model \eqref{CS}, the primitive version of the above questions was already discussed in \cite{P-K-H} for a restricted setting, but we further generalize the model in \cite{P-K-H} by varying parameters which measure asymptotic inter-particle distances. We also refer to \cite{L-W, L-W2, R-L-W} for the related questions on pattern formation.  In contrast, for the second-order Kuramoto model \eqref{Ku-second}, the above questions were not addressed in previous literature. Thus, the purpose of this paper is to study the above two questions in depth. \newline
 
 The main results of this paper is three-fold. First, we propose the second-order Kuramoto model with a bonding force:
	\begin{equation} \label{Ku-SB}
		\begin{cases}
			\displaystyle {\dot \theta}_i = \omega_i, \quad t > 0, \quad  i \in [N], \\
			\displaystyle {\dot \omega}_i =  \frac{1}{N} \sum_{j=1}^{N} \Big [ \kappa_0 \cos(\theta_j - \theta_i) + \kappa_1 \Big] (\omega_j - \omega_i) + \frac{\kappa_2}{N}\sum_{j=1}^{N} \Big[ |\theta_j - \theta_i|- \theta^{\infty}_{ij} \Big] \textup{sgn}(\theta_j - \theta_i), 
		\end{cases}
	\end{equation}
	where  $\kappa_1$ and $\kappa_2$ are nonnegative constants representing the intensities of bonding interactions, and $\theta^{\infty}_{ij}$ denotes with desirable asymptotic phase spacing between the $i$-th and the $j$-th oscilaltors. Here, we assume that the entries of the matrix $[\theta_{ij}^{\infty}]$ satisfy 
	\begin{equation} \label{A-0}
		\theta_{ii}^{\infty} = 0, \quad i \in [N], \qquad \theta_{ij}^{\infty} =  \theta_{ji}^{\infty}, \quad 1 \leq i \neq j \leq N.
	\end{equation}
Note that the R.H.S. of \eqref{Ku-SB} can be discontinuous at the instant in which $\theta_j = \theta_i$, i.e., when the oscillators collide, forcing terms become discontinuous. Thus, we may regard \eqref{Ku-SB} as the system of differential inclusions and use the Filippov theory \cite{Fi-1,Fi-2} for a global well-posedness of a generalized solution. However,  in this work, we stay in the realm of classical smooth solutions whose existence is guaranteed by the standard Cauchy-Lipschitz theory. In this case, whether a finite-time collision occurs or not is a crucial issue which is directly related with the well-posedness. Our first result yields that finite-time collision can be avoided under a suitable framework for initial configurations and system parameters such as coupling strengths and $[\theta^{\infty}_{ij}]$:
	\begin{equation}\label{A-1}
	\inf_{0 \leq t < \infty} \min_{1 \leq i \neq j \leq N} |\theta_j(t)-\theta_i(t)| > 0,\qquad \kappa_0\min_{i\ne j} \Big( \sup_{t\ge 0} \cos |\theta_j(t) - \theta_i(t)| \Big) + \kappa_1 > 0.
	\end{equation}
For the detailed description of framework, we refer to Theorem \ref{T3.1}.  Once the nonexistence of finite-time collisions is guaranteed, then complete synchronization can be directly followed from energy estimate (see Proposition \ref{KEE}). For a configuration $\{ (\theta_i, \omega_i) \}$, we define an energy functional ${\mathcal E}$:
	\begin{equation} \label{A-1-1}
	\mathcal{E}(t):= \frac{1}{2} \sum_{i=1}^{N}|\omega_i(t)|^2 + \frac{\kappa_2}{4N} \sum_{i,j =1}^{N} \Big(|\theta_j(t) -\theta_i(t)|-\theta^{\infty}_{ij} \Big)^2.
	\end{equation}
	Then, it follows from time-evolution estimate for  $\mathcal{E}$ that 
	\[ \int_{0}^{\infty} \sum_{i,j = 1}^{N}  |\omega_j(t) - \omega_i(t)|^2 dt \lesssim N \mathcal{E}(0). \]		
	Finally, we use Barbalat's lemma to derive complete synchronization (see Theorem \ref{T3.2}):
	\[ \lim_{t\to\infty} \max_{1 \leq i, j \leq N} |\dot{\theta}_j(t)-\dot{\theta}_i(t)|= 0. \]
	 Second, we propose the following Cucker-Smale model with a bonding force:
	 \begin{equation}
		\begin{cases} \label{CSB}
			\displaystyle {\dot{\bx}_i} =\bv_i,\quad t>0,\quad  i \in [N],\\
			\displaystyle {\dot{\bv}_i} =\frac{\kappa_0}{N}\sum_{j=1}^{N}\psi(\|\bx_j-\bx_i \|)\left(\bv_j- \bv_i\right) + \frac{\kappa_1}{N}\sum_{\substack{j=1 \\ j \neq  i }}^{N} 
		\Big \langle \bv_j-\bv_i, \frac{\bx_j-\bx_i}{\|\bx_i-\bx_j\|} \Big \rangle \frac{(\bx_j-\bx_i)}{\|\bx_j-\bx_i \|} \\
			\displaystyle \hspace{0.9cm}+ \frac{\kappa_2}{N}\sum_{\substack{j=1 \\ j \neq  i }}^{N} (\|\bx_j-\bx_i\|- d^{\infty}_{ij}) \frac{(\bx_j-\bx_i)}{ \|\bx_j-\bx_i \|},
		\end{cases}
	\end{equation}
	where $\kappa_1$ and $\kappa_2$ are nonnegative constants controlling the intensities of CS interactions and $\langle \cdot, \cdot \rangle$ is the standard inner product in $\bbr^d$. 
	The system parameter matrix ${\mathcal D}^{\infty} = [d_{ij}^{\infty}]$ is assumed to satisfy the symmetry conditions \eqref{A-0}, and the communication weight function $\psi:~{\mathbb R_+} \rightarrow \bbr_+$ is nonnegative, bounded, locally Lipschitz continuous and strictly positive in some neighborhood of $0$: 
	\begin{equation}\label{A-2}
		0\leq\psi(r)\leq \psi_M,\quad  r\geq 0, \quad  \psi_m:=\min_{r \in [0, U]} \psi(r)>0, 
	\end{equation}	
where $U$ is defined in \eqref{D-0-0}.  The case $d_{ij}^{\infty} = 2R$ in \eqref{CSB} was treated in \cite{P-K-H}. Hence our proposed model \eqref{CSB} is slightly different from the model proposed in \cite{P-K-H} (the distinction between two models will be discussed in Remark \ref{R4.1}). 

Since the R.H.S. of \eqref{CSB} contains $\| \bx_j - \bx_i \|$ in denominators, it can be singular at the instant in which $\bx_i = \bx_j$. Therefore, as in \eqref{Ku-SB}, finite-time collision avoidance will be a crucial matter for the global well-posedness of classical solutions. For this, parallel to \eqref{A-1-1}, we can define an energy functional $E$:
	\begin{equation} \label{A-2-1}
		E :=\sum_{i=1}^{N}\frac{\|\bv_i\|^2}{2} + \frac{\kappa_2}{4N}\sum_{i,j =1}^{N} \Big(\|\bx_j-\bx_i\|-d^{\infty}_{ij} \Big)^2.
	\end{equation}
	
	In fact, under the formal correspondence:
	
	\[ (\theta_i, \omega_i)  \quad \Longleftrightarrow \quad (\bx_i, \bv_i), \]
	
	the energy functionals defined in \eqref{A-1-1} and \eqref{A-2-1} take the same form. Now, we return to the collision avoidance issue for \eqref{CSB}. Suppose that initial data and system parameters satisfy 
	
	\[ \min_{1 \leq i,j \leq N} \| \bx_i^0 - \bx_j^0 \| > 0, \quad E(0)<\frac{\kappa_2}{N} \Big( \min_{1 \leq i, j \leq N} d^{\infty}_{ij} \Big)^2. \]
	
	Then, particles do not collide in any finite-time interval and relative distances are uniformly bounded (Theorem \ref{T4.1}):
	\begin{align*}
		0 &< \min_{1 \leq i \neq j \leq N}  \inf_{0 \leq t < \infty} \|\bx_i(t) - \bx_j(t) \| \leq \max_{1 \leq i \neq j \leq N}  \sup_{0 \leq t < \infty} \|\bx_i(t) - \bx_j(t) \| < \infty.
	\end{align*}
	On the other hand, under suitable conditions on $\psi$ in \eqref{CSB}, initial data and system parameters, the mono-cluster flocking emerges asymptotically (see Theorem \ref{T4.2}): 
	\begin{align*}
		\sup_{0 \leq t < \infty} \max_{1\leq i, j\leq N} \|\bx_i(t)-\bx_j(t)\| < \infty,\quad\lim_{t \to \infty}  \max_{1\leq i, j \leq N}  \|\bv_j(t)-\bv_i(t)\| =0.
	\end{align*}
	Third, we provide a global existence of Filippov solution for \eqref{CSB} with $N = 2$ on the real line with the desired convergence estimate:
	\[ \lim_{t \to \infty} |x_1(t) - x_2(t) | = d_{12}^{\infty}. \]
	See Section \ref{sec:4.3} for details. \newline
	
The rest of this paper is organized as follows. In Section \ref{sec:2}, we study several basic estimates for the system \eqref{Ku-SB} and \eqref{CSB}. In Section \ref{sec:3}, we present estimates on finite-time collision avoidance and asymptotic synchronization for the second-order Kuramoto model with a bonding control. In Section \ref{sec:4}, we provide similar estimates on the finite-time collision avoidance and asymptotic flocking estimate for the Cucker-Smale model with a bonding control. Furthermore, for the two-particle system on the real line, we show that spatial relative distance tends to the desired relative distance asymptotically.  In Section \ref{sec:5}, we provide several numerical examples for the proposed models and compare them with analytical results in previous sections. Finally, Section \ref{sec:6} is devoted to a brief summary of main results and some remaining issues for a future work. In Appendix A and B, we provivde heuristic derivations for the bonding feedback controls in the Cucker-Smale and the Kuramoto models, respectively. \newline
	
	\noindent {\bf Notation}: We denote $\Theta, W$ and $X$  by the set of state or state vector, respectively, i.e.,
	\begin{align*}
	\begin{aligned}
       & \Theta := \{ \theta_i \} \quad \mbox{or} \quad (\theta_1, \dots, \theta_N), \quad  W: = \{ \omega_i \} \quad \mbox{or} \quad (\omega_1, \dots, \omega_N), \\
        & X:= \{ \bx_i \} \quad \mbox{or} \quad (\bx_1, \dots, \bx_N), \quad  \quad V:= \{ \bv_i \} \quad \mbox{or} \quad (\bv_1, \dots, \bv_N).
        \end{aligned}
        \end{align*}
For $i,j\in[N]$, we also set 	
		\begin{align*}
		\begin{aligned}
		&\br_{ij} := \bx_i-\bx_j,\quad r_{ij} :=\| \br_{ij}\|, \quad \sum_{i \neq j}^N :=  \sum_{i=1}^{N} \sum_{\substack{j =1 \\ i \neq j}}^N,\quad  \max_{i, j} := \max_{i,j\in[N]},\\
		&\min_{i, j} := \min_{i,j\in[N]},\quad \max_{i \neq j} := \max_{i,j\in[N],i\ne j}, \quad \min_{i \neq j} := \min_{i,j\in[N],i\ne j}.
	\end{aligned}
	\end{align*}
	
	\section{Preliminaries} \label{sec:2}
	\setcounter{equation}{0}
	\hspace{.1cm} In this section, we study a relation between the second-order Kuramoto model and the first-order Kuramoto model, basic a priori estimates on the conservation law and energy estimate. We also discuss parallel issues for the Cucker-Smale model with a bonding control. 	
	\subsection{The second-order Kuramoto model} \label{sec:2.1}
	First, we study the relation between the Cauchy problems to \eqref{Ku-second} and  \eqref{Ku} in the following proposition. 
	\begin{proposition} \label{P2.1}
		Suppose that $ \{ (\theta_i, \omega_i) \}$ is a global smooth solution to \eqref{Ku} with the initial data $\{ (\theta_i^0, \omega_i^0) \}$. Then $\{ (\theta_i, \omega_i = {\dot \theta}_i) \}$ is a global smooth solution to \eqref{Ku-second} with the  constrained initial data $\{( \theta_i^0, \omega_i^0) \}$:
		\[  \omega_i^0 := \nu_i + \frac{\kappa_0}{N} \sum_{j=1}^{N} \sin(\theta^0_j - \theta^0_i), \quad i \in [N]. \]
		Conversely, let $\{ (\theta_i, \omega_i) \}$ be a global smooth solution to \eqref{Ku-second} with the constrained initial data $\{ (\theta_i^0, \omega_i^0) \}$:
		\[ \omega_i^0 = \nu_i + \frac{\kappa_0}{N} \sum_{j=1}^{N} \sin(\theta^0_j - \theta^0_i), \quad i \in [N]. \]
		Then, $\{ \theta_i \}$ is a solution of \eqref{Ku} with the initial data $\{ \theta_i^0 \}$.
	\end{proposition}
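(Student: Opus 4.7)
The plan is to verify both directions by direct differentiation, using the fact that the first-order Kuramoto model is the unique first integral of the second-order model on the constrained manifold.

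For the forward direction, I would start from a smooth solution $\{\theta_i\}$ of \eqref{Ku} and simply differentiate both sides of the first-order equation in time. Since $\sin(\theta_j-\theta_i)$ is smooth, the chain rule gives
\[
\ddot\theta_i = \frac{\kappa_0}{N}\sum_{j=1}^N \cos(\theta_j-\theta_i)(\dot\theta_j-\dot\theta_i).
\]
Defining $\omega_i := \dot\theta_i$, the pair $(\theta_i,\omega_i)$ then trivially satisfies $\dot\theta_i=\omega_i$, and the identity above is exactly the second equation of \eqref{Ku-second}. Evaluating $\omega_i=\dot\theta_i$ at $t=0$ via \eqref{Ku} yields the asserted initial condition. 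Nothing subtle happens here; it is purely the observation that \eqref{Ku-second} is obtained by differentiating \eqref{Ku}.

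For the converse, the key idea is to show that the constraint
\[
\omega_i(t) = \nu_i + \frac{\kappa_0}{N}\sum_{j=1}^N \sin(\theta_j(t)-\theta_i(t)), \qquad i \in [N],
\]
which holds at $t=0$ by assumption, is propagated in time. I would introduce the defect functional
\[
f_i(t) := \omega_i(t) - \nu_i - \frac{\kappa_0}{N}\sum_{j=1}^N \sin(\theta_j(t)-\theta_i(t)),
\]
and compute $\dot f_i$ using the second-order dynamics. Since $\dot\theta_i=\omega_i$, one obtains
\[
\dot f_i = \dot\omega_i - \frac{\kappa_0}{N}\sum_{j=1}^N \cos(\theta_j-\theta_i)(\omega_j-\omega_i) = 0
\]
by the second equation of \eqref{Ku-second}. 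Thus each $f_i$ is constant in time, and since $f_i(0)=0$ by the constrained initial data, the identity $\omega_i(t)\equiv \nu_i + \frac{\kappa_0}{N}\sum_j \sin(\theta_j-\theta_i)$ holds for all $t$. Substituting this into $\dot\theta_i=\omega_i$ recovers \eqref{Ku}.

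I do not anticipate a real obstacle: the assertion amounts to the statement that the first-order Kuramoto flow is an invariant submanifold of the second-order flow cut out by the algebraic constraint $f_i=0$. The only thing to keep track of is the smoothness required to differentiate, which is granted by the hypothesis of a global smooth solution. No compactness, stability, or asymptotic machinery is needed; the argument is a one-line time derivative in each direction together with an initial-data identification.
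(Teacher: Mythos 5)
Your proposal is correct and follows essentially the same route as the paper, which simply notes that $\eqref{Ku-second}_1$ is obtained by differentiating $\eqref{Ku}_1$ in time and leaves the converse as straightforward. Your defect-functional argument $f_i(t)\equiv f_i(0)=0$ is precisely the natural completion of that omitted converse step.
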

	\begin{proof} Note that $\eqref{Ku-second}_1$ can be obtained by diffrentiating $\eqref{Ku}_1$ with respect to $t$. The rest of arguments can be followed in a straightforward manner.
		 \end{proof}
	Next, we study a translation invariance and conserved quantities associated with $\eqref{Ku-SB}$. 
	\begin{lemma} \label{L2.1} Let $\{ (\theta_i, \omega_i) \}$ be a global smooth solution to \eqref{Ku-SB}. Then the following assertions hold.
		\begin{enumerate}
			\item
			System \eqref{Ku-SB} is Galilean invariant, i.e., it is invariant under the Galilean transformation:
			\[ (\theta_i, \omega_i)~ \mapsto ~ (\theta_i + \alpha t, \omega_i + \alpha), \quad \mbox{for}~~ \alpha \in {\mathbb R}, ~~i \in [N]. \] 
			\item
			The total sum of frequencies is preserved:
			\[ \sum_{i=1}^{N} \omega_i(t) =  \sum_{i=1}^{N} \omega_i^0, \quad t > 0. \]
		\end{enumerate}
	\end{lemma}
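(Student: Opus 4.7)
The plan is to verify both assertions by direct substitution into \eqref{Ku-SB} and by exploiting symmetries of the coupling kernels in the index pair $(i,j)$, together with the symmetry assumption \eqref{A-0} on $[\theta^{\infty}_{ij}]$.

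For the Galilean invariance in (1), I would define $\tilde{\theta}_i(t) := \theta_i(t) + \alpha t$ and $\tilde{\omega}_i(t) := \omega_i(t) + \alpha$, and verify each of the two equations in \eqref{Ku-SB} for $(\tilde{\theta}_i, \tilde{\omega}_i)$. The first one reduces to $\dot{\tilde{\theta}}_i = \dot{\theta}_i + \alpha = \omega_i + \alpha = \tilde{\omega}_i$. For the second equation, the crucial observation is that every term on the right-hand side depends only on the differences $\theta_j - \theta_i$ and $\omega_j - \omega_i$, both of which are left unchanged by the shift; meanwhile the left-hand side $\dot{\tilde{\omega}}_i = \dot{\omega}_i$ is also unaffected since $\alpha$ is a constant. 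Thus the transformed profile satisfies the same system.

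For the conservation law in (2), I would sum the second equation of \eqref{Ku-SB} over $i\in[N]$ and argue that each double sum on the right-hand side vanishes by antisymmetry under the swap $i \leftrightarrow j$. Setting $a_{ij} := \kappa_0 \cos(\theta_j - \theta_i) + \kappa_1$, one has $a_{ij} = a_{ji}$ because cosine is even, so $\sum_{i,j} a_{ij}(\omega_j - \omega_i)$ is antisymmetric in $(i,j)$ and therefore vanishes. For the bonding term, defining $b_{ij} := (|\theta_j - \theta_i| - \theta^{\infty}_{ij})\,\textup{sgn}(\theta_j - \theta_i)$ and using $\theta^{\infty}_{ij} = \theta^{\infty}_{ji}$ from \eqref{A-0}, one obtains $b_{ji} = -b_{ij}$, so $\sum_{i,j} b_{ij} = 0$. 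Hence $\frac{d}{dt}\sum_i \omega_i \equiv 0$, and integrating in $t$ yields the claimed identity.

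Both parts reduce to elementary algebra, so I do not anticipate a substantive obstacle. The only subtle point is to use the symmetry assumption \eqref{A-0} carefully when handling the bonding term, because without $\theta^{\infty}_{ij} = \theta^{\infty}_{ji}$ the $i \leftrightarrow j$ cancellation in that double sum would fail and the total frequency would no longer be conserved.
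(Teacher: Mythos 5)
Your proposal is correct and follows essentially the same route as the paper: part (1) by direct substitution using that the right-hand side depends only on the differences $\theta_j-\theta_i$ and $\omega_j-\omega_i$, and part (2) by the antisymmetry of the summands under the index swap $i\leftrightarrow j$. Your explicit check that the bonding term is antisymmetric via $\theta^{\infty}_{ij}=\theta^{\infty}_{ji}$ merely spells out what the paper's phrase ``skew-symmetric with respect to index exchange'' leaves implicit.
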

	\begin{proof}
		(i) For some $\alpha \in {\mathbb R}$, we set 
		\[ \tilde{\theta}_i := \theta_i + \alpha t, \quad \tilde{\omega}_i :=  \omega_i + \alpha. \]
		Then, it is easy to see
		\begin{equation} \label{B-0-0}
			\dot{\tilde{\theta}}_i = \frac{d}{dt} \Big( \theta_i   + \alpha t  \Big) = {\dot \theta}_i   + \alpha = \omega_i + \alpha = {\tilde \omega}_i. 
		\end{equation}
		On the other hand, the R.H.S. of $\eqref{Ku-SB}_2$ is expressed in terms of $\theta_j - \theta_i$ and $\omega_j - \omega_i$. Thus, one has 
		\begin{align} \label{B-0-1}
			\begin{aligned}
				\dot{\tilde \omega}_i &=\frac{\kappa_0}{N} \sum_{j=1}^{N} \cos(\tilde{\theta}_j - \tilde{\theta}_i)({\tilde{\omega}}_j - {\tilde{\omega}}_i)\\
				&+\frac{\kappa_1}{N}\sum_{j=1}^{N}(\tilde{\omega}_j - \tilde{\omega}_i)+ \frac{\kappa_2}{N}\sum_{j=1}^{N}(| \tilde{\theta}_j - \tilde{\theta}_i|-\theta^{\infty}_{ij})\textup{sgn}(\tilde{\theta}_j - \tilde{\theta}_i).
			\end{aligned}
		\end{align}
		Finally, we combine \eqref{B-0-0} and \eqref{B-0-1} to derive the first assertion.  \newline

		\noindent (ii)~ The R.H.S. of \eqref{Ku-SB} is skew-symmetric with respect to index exchange $(i, j) \leftrightarrow (j, i)$. Thus, the total sum $\sum_{i} \omega_i$ satisfies 
		\[ \frac{d}{dt} \sum_{i=1}^{N} \omega(t) = 0, \quad t > 0. \]
		This yields the desired estimate. 
		 \end{proof}
	 Now, we introduce an energy functional associated with \eqref{Ku-SB}. For a given configuration $\{ (\theta_i, \omega_i) \}$, we set 
	\begin{align} \label{B-0-5}
		\begin{aligned}
			& \mathcal{E}:=\mathcal{E}_k+\mathcal{E}_p, \quad  \mathcal{E}_k:=\frac{1}{2} \sum_{i=1}^{N}|\omega_i|^2, \\
			& \mathcal{E}_p:=\frac{\kappa_2}{4N} \sum_{i,j =1}^{N} \Big(|\theta_j-\theta_i|-\theta^{\infty}_{ij} \Big)^2 = \frac{\kappa_2}{2N} \sum_{1 \leq i < j \leq N} \Big(|\theta_j-\theta_i|-\theta^{\infty}_{ij} \Big)^2, 
	\end{aligned}
	\end{align}
	where ${\mathcal E}_k, {\mathcal E}_p$ and ${\mathcal E}$ denote the kinetic energy, the potential energy and the total energy, respectively. 
	\begin{proposition}\label{KEE}
		\emph{(Energy estimate)}
		For $\tau\in(0,\infty]$, let $\{(\theta_i, \omega_i)\}$ be a smooth solution to \eqref{Ku-SB} in the time-interval $t\in[0,\tau)$. Then, the total energy $\mathcal{E}$ satisfies
		\begin{equation} \label{B-0-6}
			\mathcal{E}(t)+\int_{0}^{t} {\mathcal P}(s)ds=\mathcal{E}(0) \quad \mbox{for}\quad t\in[0,\tau),
		\end{equation}
		where the production rate functional ${\mathcal P}(t)$  is given as follows:
		\[ {\mathcal P}(t):=\frac{1}{2N}\sum_{i,j =1}^{N} \Big(\kappa_0 \cos(\theta_j-\theta_i) + \kappa_1 \Big) |\omega_j - \omega_i|^2. \]
		\end{proposition}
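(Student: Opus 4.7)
The plan is to compute $\dot{\mathcal{E}}_k$ and $\dot{\mathcal{E}}_p$ separately and show that the bonding-force contribution in the former cancels with the latter, leaving only the dissipative production term $-\mathcal{P}(t)$. Throughout, I work on the interval $[0,\tau)$ where the solution is smooth, which in particular means no finite-time collisions occur, so the quantities $|\theta_j-\theta_i|$ and $\mathrm{sgn}(\theta_j-\theta_i)$ are smooth and constant-signed along trajectories; this justifies pointwise differentiation.

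First I would differentiate $\mathcal{E}_k = \tfrac{1}{2}\sum_i \omega_i^2$ along the flow and substitute the equation $\eqref{Ku-SB}_2$ for $\dot\omega_i$ to obtain
\[
\dot{\mathcal{E}}_k = \frac{1}{N}\sum_{i,j=1}^N \omega_i\bigl(\kappa_0\cos(\theta_j-\theta_i)+\kappa_1\bigr)(\omega_j-\omega_i) + \frac{\kappa_2}{N}\sum_{i,j=1}^N \omega_i\bigl(|\theta_j-\theta_i|-\theta_{ij}^\infty\bigr)\mathrm{sgn}(\theta_j-\theta_i).
\]
For the first (Cucker–Smale type) sum, the coefficient $a_{ij}:=\kappa_0\cos(\theta_j-\theta_i)+\kappa_1$ is symmetric in $(i,j)$, so the standard symmetrization $\sum_{i,j}a_{ij}\omega_i(\omega_j-\omega_i) = -\tfrac{1}{2}\sum_{i,j}a_{ij}(\omega_j-\omega_i)^2$ produces precisely $-\mathcal{P}(t)$.

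Next I would differentiate $\mathcal{E}_p$. Using $\tfrac{d}{dt}|\theta_j-\theta_i| = \mathrm{sgn}(\theta_j-\theta_i)(\omega_j-\omega_i)$ and the chain rule,
\[
\dot{\mathcal{E}}_p = \frac{\kappa_2}{2N}\sum_{i,j=1}^N\bigl(|\theta_j-\theta_i|-\theta_{ij}^\infty\bigr)\mathrm{sgn}(\theta_j-\theta_i)(\omega_j-\omega_i).
\]
Let $b_{ij}:=\bigl(|\theta_j-\theta_i|-\theta_{ij}^\infty\bigr)\mathrm{sgn}(\theta_j-\theta_i)$. By the symmetry $\theta_{ij}^\infty=\theta_{ji}^\infty$ and the antisymmetry of $\mathrm{sgn}(\theta_j-\theta_i)$, we have $b_{ij}=-b_{ji}$. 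The bonding contribution in $\dot{\mathcal{E}}_k$ equals $\tfrac{\kappa_2}{N}\sum_{i,j}\omega_i b_{ij}$, and swapping indices in that sum (using $b_{ij}=-b_{ji}$) yields $\tfrac{\kappa_2}{N}\sum_{i,j}\omega_i b_{ij} = -\tfrac{\kappa_2}{2N}\sum_{i,j}b_{ij}(\omega_j-\omega_i) = -\dot{\mathcal{E}}_p$. Adding the two contributions gives $\dot{\mathcal{E}} = -\mathcal{P}(t)$.

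Finally, integrating this identity over $[0,t]\subset[0,\tau)$ produces \eqref{B-0-6}. The only subtlety — and what I expect to be the main issue to state carefully — is justifying the differentiation of $|\theta_j-\theta_i|$, but this is immediate because smoothness of the solution on $[0,\tau)$ (together with the $\mathrm{sgn}$-term appearing in the vector field) forces $\theta_i(t)\neq\theta_j(t)$ for $i\neq j$ on that interval, so each $|\theta_j-\theta_i|$ is $C^1$ there. No structural obstacle remains; the proof reduces to the two symmetrization identities described above.
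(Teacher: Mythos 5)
Your proof is correct and follows essentially the same route as the paper's: differentiate $\mathcal{E}_k$ along the flow, symmetrize the $\kappa_0,\kappa_1$ terms to produce $-\mathcal{P}$, and identify the bonding contribution with $-\dot{\mathcal{E}}_p$ via $\tfrac{d}{dt}|\theta_j-\theta_i|=\mathrm{sgn}(\theta_j-\theta_i)(\omega_j-\omega_i)$. The only cosmetic difference is that the paper justifies the differentiation of $|\theta_j-\theta_i|$ by smooth approximation of $\mathrm{sgn}(\cdot)$ rather than by invoking collision-freeness of smooth solutions, but both treatments are adequate here.
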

	\begin{proof}
		We multiply $\omega_i$ to $\eqref{Ku-SB}_2$ to obtain
		\begin{align}
			\begin{aligned} \label{B-0-7}
				\frac{d}{dt} \frac{|\omega_i|^2}{2} &= \frac{\kappa_0}{N} \sum_{j=1}^{N} \cos(\theta_j - \theta_i)(\omega_j - \omega_i)\omega_i\\
				&+\frac{\kappa_1}{N}\sum_{j=1}^{N}(\omega_j - \omega_i)\omega_i+ \frac{\kappa_2}{N}\sum_{j=1}^{N}(|\theta_j - \theta_i|- \theta^{\infty}_{ij})\textup{sgn}(\theta_j - \theta_i)\omega_i.
			\end{aligned}
		\end{align}
		Then, we sum up \eqref{B-0-7} over all $i \in [N]$, and then use the index interchange trick $i\leftrightarrow~j$ to obtain		
		\begin{align}\label{B-0-8}
			\begin{aligned}
				\frac{d{\mathcal E}_k}{dt} &= \frac{d}{dt}\left(\frac{1}{2}\sum_{i=1}^{N}|\omega_i|^2\right)=-\frac{\kappa_0}{2N}\sum_{i,j =1}^{N}\cos(\theta_j-\theta_i)|\omega_j - \omega_i|^2\\
				&-\frac{\kappa_1}{2N}\sum_{i,j =1}^{N}|\omega_j - \omega_i|^2-\frac{\kappa_2}{2N}\sum_{i,j= 1}^N(|\theta_j - \theta_i|-\theta^{\infty}_{ij})\textup{sgn}(\theta_j - \theta_i)(\omega_j-\omega_i)\\
				&=: -\frac{1}{2N}\sum_{i,j =1}^{N} \Big(\kappa_0 \cos(\theta_j-\theta_i) + \kappa_1 \Big) |\omega_j - \omega_i|^2  + {\mathcal I}_1.
			\end{aligned}
		\end{align}
		Now, we estimate the term ${\mathcal I}_1$ as follows. Since \[ |\theta_j - \theta_i| =  \textup{sgn}(\theta_j - \theta_i) (\theta_j - \theta_i),\]  one has 
		\[ \textup{sgn}(\theta_j - \theta_i)(\omega_j-\omega_i) =  \textup{sgn}(\theta_j - \theta_i)(\dot{\theta}_j-\dot{\theta}_i) = \frac{d}{dt} |\theta_j - \theta_i|. \]
	In fact, the last relation can be made rigorously by approximating $\mbox{sgn}(\cdot)$ by its smooth approximation. We omit its details here. Thus, the term ${\mathcal I}_1$ can be estimated as follows.
		\begin{align}
			\begin{aligned} \label{B-0-9}
				{\mathcal I}_1& = -\frac{\kappa_2}{2N}\sum_{i,j= 1}^N(|\theta_j - \theta_i|-\theta^{\infty}_{ij})\frac{d}{dt}|\theta_j-\theta_i|=-\frac{\kappa_2}{4N}\frac{d}{dt} \Big( \sum_{i,j= 1}^N(|\theta_j - \theta_i|-\theta^{\infty}_{ij})^2 \Big) = -\frac{d{\mathcal E}_p}{dt}.
			\end{aligned}
		\end{align}
		Finally, we combine \eqref{B-0-8} and \eqref{B-0-9} to get the desired energy estimate.
		 \end{proof}
	\subsection{The Cucker-Smale model} \label{sec:2.2}
	\hspace{.1cm} As in Proposition \ref{KEE}, we study the basic structure of \eqref{CSB}.  First, we begin with the invariance and conservation of total momentum to \eqref{CSB}.
	\begin{lemma} \label{L2.2} Let $\{ (\bx_i, \bv_i) \}$ be a solution to \eqref{CSB}. Then the following assertions hold.
		\begin{enumerate}
			\item
			The system \eqref{CSB} is Galilean invariant, i.e., it is invariant under the Galilean transformation:
			\[ (\bx_i, \bv_i)~ \mapsto ~ (\bx_i + \bc t, \bv_i + \bc), \quad \mbox{for $\bc \in {\mathbb R}^d$}, ~i \in [N]. \] 
			\item
			The total sum of velocities is conserved:
			\[ \sum_{i=1}^{N} \bv_i(t) =  \sum_{i=1}^{N} \bv_i^0, \quad t \geq 0. \]
		\end{enumerate}
	\end{lemma}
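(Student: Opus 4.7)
The plan is to mirror the proof of Lemma \ref{L2.1}, since \eqref{CSB} shares with \eqref{Ku-SB} the two structural features that drive both assertions: every interaction term depends on states only through the pairwise differences $\bx_j - \bx_i$ and $\bv_j - \bv_i$, and every such term is antisymmetric under the index swap $i \leftrightarrow j$. Once these two structural facts are isolated, (i) and (ii) drop out immediately.

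For part (i), I would set $\tilde{\bx}_i := \bx_i + \bc t$ and $\tilde{\bv}_i := \bv_i + \bc$, and differentiate to get $\dot{\tilde{\bx}}_i = \bv_i + \bc = \tilde{\bv}_i$, matching $\eqref{CSB}_1$ in the transformed variables. For $\eqref{CSB}_2$, the pairwise differences $\tilde{\bx}_j - \tilde{\bx}_i = \bx_j - \bx_i$ and $\tilde{\bv}_j - \tilde{\bv}_i = \bv_j - \bv_i$ are invariant, and $\dot{\tilde{\bv}}_i = \dot{\bv}_i$ since $\bc$ is constant. Substituting shows verbatim that $\{(\tilde{\bx}_i, \tilde{\bv}_i)\}$ satisfies the same system, establishing Galilean invariance.

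For part (ii), I would sum $\eqref{CSB}_2$ over $i \in [N]$ and verify that each of the three interaction terms is antisymmetric under $(i,j) \leftrightarrow (j,i)$, so that the resulting double sum vanishes by the standard index-exchange trick (exactly as in the proof of Lemma \ref{L2.1}(ii)). The Cucker-Smale term pairs the symmetric weight $\psi(\|\bx_j - \bx_i\|)$ with the antisymmetric velocity difference. The $\kappa_1$ alignment-bonding term has a symmetric scalar coefficient (the inner product $\langle \bv_j - \bv_i, (\bx_j - \bx_i)/\|\bx_j-\bx_i\|\rangle$ picks up two simultaneous sign flips and is thus invariant) times an antisymmetric outer unit vector. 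The $\kappa_2$ spring-type term uses the symmetry $d_{ij}^{\infty} = d_{ji}^{\infty}$ from \eqref{A-0} to make its scalar coefficient $\|\bx_j-\bx_i\| - d_{ij}^{\infty}$ symmetric, again paired with the antisymmetric unit vector. Hence $\frac{d}{dt}\sum_{i=1}^N \bv_i \equiv 0$, giving conservation of total momentum.

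I do not anticipate any substantive obstacle: both statements are algebraic consequences of the structure of \eqref{CSB} and follow line-by-line the argument used for Lemma \ref{L2.1}. The only mild addition relative to the Kuramoto setting is the explicit sign-bookkeeping for the two bonding terms, in particular the need to invoke the symmetry assumption \eqref{A-0} on $[d_{ij}^{\infty}]$ so that the spring-type term is genuinely antisymmetric.
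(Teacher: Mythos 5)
Your proposal is correct and follows essentially the same route as the paper: part (i) is verbatim the paper's computation with the shifted variables, and part (ii) is the same skew-symmetry/index-exchange argument the paper uses (stated explicitly for Lemma \ref{L2.1}(ii) and left implicit here). Your explicit sign-bookkeeping for the three interaction terms, including the use of $d_{ij}^{\infty}=d_{ji}^{\infty}$ from \eqref{A-0}, is a faithful and slightly more detailed rendering of what the paper intends.
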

	\begin{proof} We set 
		\[ {\tilde \bv}_i := \bv_i + \bc, \quad {\tilde \bx}_i := \bx_i + \bc t. \]
		Then, one has 
		\begin{equation} \label{B-0}
			\dot{\tilde \bx} = \dot{\bx}_i + \bc = \bv_i + \bc = {\tilde \bv}_i, \quad i \in [N]. 
		\end{equation}
		On the other hand, note that
	      \[ \bx_j - \bx_i = \tilde{\bx}_j - \tilde{\bx}_i, \quad  \bv_j - \bv_i = \tilde{\bv}_j - \tilde{\bv}_i, \quad i, j \in [N],  \]
		and the R.H.S. of \eqref{CSB} is expressed in terms of relative differences $\bx_j - \bx_i$ and $\bv_j - \bv_i$, and 
		\begin{align}
			\begin{aligned} \label{B-1}
				\dot{\tilde \bv}_i &=\frac{\kappa_0}{N}\sum_{j=1}^{N}\psi({\tilde r}_{ij})\left(\tilde{\bv}_j- \tilde{\bv}_i\right)\\
				&+ \frac{\kappa_1}{N}\sum_{j \neq i}^{N}\frac{\langle \tilde{\bv}_j- \tilde{\bv}_i, \tilde{\bx}_j- \tilde{\bx}_i\rangle}{{\tilde r}^2_{ij}}(\tilde{\bx}_j- \tilde{\bx}_i)+\displaystyle\frac{\kappa_2}{N}\sum_{j \neq i}^{N}\frac{(\tilde{r}_{ij}-d^{\infty}_{ij})}{\tilde{r}_{ij}}(\tilde{\bx}_j- \tilde{\bx}_i),
			\end{aligned}
		\end{align}
		where we defined ${\tilde r}_{ij}:=\| \tilde{\bx}_i- \tilde{\bx}_j\|$. We combine \eqref{B-0} and \eqref{B-1} to get the desired estimate. 
		 \end{proof}
	As in Section \ref{sec:2.1}, we introduce an energy for \eqref{CSB} as follows.
	\begin{align}
	\begin{aligned}  \label{D-1}
		& E:= E_k+ E_p, \quad E_k:=\frac{1}{2}\sum_{i=1}^{N}\|\bv_i\|^2, \\
		& E_p:=\frac{\kappa_2}{4N}\sum_{i,j =1}^{N}(\|\bx_j-\bx_i\|-d^{\infty}_{ij})^2 = \frac{\kappa_2}{2N}\sum_{1 \leq i < j \leq N}(\|\bx_j-\bx_i\|-d^{\infty}_{ij})^2,
		\end{aligned}
	\end{align}
	where $E_k, E_p$ and $E$ correspond to kinetic energy, potential energy and total energy, respectively. 
	\begin{proposition}\label{CSEE}
		\emph{(Energy estimate)}
		For $\tau\in(0,\infty]$, let $\{(\bx_i,\bv_i)\}_{i=1}^N$ be a solution to \eqref{CSB} on $t\in[0,\tau)$. Then, the total energy $\mathcal{E}$ satisfies 
		\[ E(t)+\int_{0}^{t} P(s)ds= E(0) \quad \mbox{for}\quad t\in[0,\tau),\]
		where $P$ is the total energy production functional given by
		\[ P := \frac{\kappa_0}{2N} \sum_{i,j =1}^{N}\psi(\|\bx_j-\bx_i\|)\| \bv_j- \bv_i\|^2+ \frac{\kappa_1}{2N} \sum_{i\ne j}^{N} \Big \langle \bv_j-\bv_i, \frac{\br_{ji}}{r_{ji}}  \Big \rangle^2, \quad t \in (0, \tau).
	\]
	\end{proposition}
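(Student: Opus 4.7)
The plan is to mimic the proof of Proposition \ref{KEE} for the Kuramoto case: take the inner product of $\eqref{CSB}_2$ with $\bv_i$, sum over $i \in [N]$, and use the index-swap trick $i \leftrightarrow j$ on each of the three coupling terms separately, matching the first two to the production $P$ and the third one to $-\frac{d}{dt}E_p$.

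For the $\kappa_0$ term, applying the standard Cucker--Smale symmetrization (noting that $\psi(\|\bx_j-\bx_i\|)$ and $r_{ij}$ are symmetric in $(i,j)$) converts $\sum_{i,j}\psi(r_{ij})\langle \bv_j - \bv_i, \bv_i \rangle$ into $-\tfrac{1}{2}\sum_{i,j}\psi(r_{ij})\|\bv_j - \bv_i\|^2$, reproducing the $\kappa_0$-component of $P$. For the $\kappa_1$ term, set
\[
A_{ij} := \Big\langle \bv_j - \bv_i, \tfrac{\bx_j-\bx_i}{r_{ij}} \Big\rangle \Big\langle \tfrac{\bx_j-\bx_i}{r_{ij}}, \bv_i \Big\rangle.
\]
Replacing $i \leftrightarrow j$, the ratio $(\bx_j-\bx_i)/r_{ij}$ acquires a single sign flip while $\bv_j - \bv_i$ flips twice in the first factor, giving $A_{ji} = -\langle \bv_j - \bv_i, (\bx_j-\bx_i)/r_{ij}\rangle \langle (\bx_j-\bx_i)/r_{ij}, \bv_j\rangle$. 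Consequently $A_{ij}+A_{ji} = -\langle \bv_j - \bv_i, (\bx_j-\bx_i)/r_{ij}\rangle^2$, and since $\sum_{i \ne j}A_{ij} = \tfrac12 \sum_{i \ne j}(A_{ij}+A_{ji})$, this contribution equals $-\tfrac{\kappa_1}{2N}\sum_{i \ne j}\langle \bv_j - \bv_i, \br_{ji}/r_{ji}\rangle^2$, i.e., the $\kappa_1$-component of $-P$.

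For the $\kappa_2$ term, the key identity is $\tfrac{d}{dt}r_{ij} = \langle (\bx_j-\bx_i)/r_{ij}, \bv_j - \bv_i\rangle$, which is valid whenever $r_{ij} > 0$. Symmetrizing $(r_{ij}-d^\infty_{ij})\langle (\bx_j-\bx_i)/r_{ij}, \bv_i\rangle$ by the swap $i \leftrightarrow j$ produces $-\tfrac12 (r_{ij}-d^\infty_{ij})\langle (\bx_j-\bx_i)/r_{ij}, \bv_j - \bv_i\rangle = -\tfrac12 (r_{ij}-d^\infty_{ij})\tfrac{d}{dt}r_{ij}$; multiplied by $\kappa_2/N$ and summed, this becomes $-\tfrac{\kappa_2}{4N}\tfrac{d}{dt}\sum_{i,j}(r_{ij}-d^\infty_{ij})^2 = -\tfrac{d}{dt}E_p$ (the diagonal $i=j$ terms contribute zero thanks to the symmetry $d^\infty_{ii}=0$ from \eqref{A-0}).

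Combining the three contributions yields $\tfrac{d}{dt}E_k = -P - \tfrac{d}{dt}E_p$, hence $\tfrac{d}{dt}E = -P$, and integrating over $[0,t] \subset [0,\tau)$ gives the stated identity. The only non-routine step is the careful sign accounting in the $\kappa_1$ symmetrization, since one must distinguish between the one sign flip that affects the projector $(\bx_j-\bx_i)/r_{ij}$ and the two sign flips that leave the leading velocity-difference scalar invariant; the overall negativity of $P$ (in particular nonnegativity of the $\kappa_1$ piece) depends on this.
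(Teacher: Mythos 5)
Your proof is correct and follows essentially the same route as the paper's: pair $\eqref{CSB}_2$ with $\bv_i$, symmetrize each of the three coupling terms via the swap $i\leftrightarrow j$, match the $\kappa_0$ and $\kappa_1$ contributions to $-P$, and identify the $\kappa_2$ contribution with $-\frac{d}{dt}E_p$ through the identity $\frac{d}{dt}r_{ij} = \langle \br_{ji}, \bv_j-\bv_i\rangle/r_{ji}$. The sign bookkeeping in your $\kappa_1$ symmetrization is accurate and matches the computation in \eqref{D-3-0}--\eqref{D-2}.
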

	\begin{proof} Below, we estimate the time-derivatives of $E_k$ and $E_p$ one by one.  \newline
		
		\noindent $\bullet$~Case A (Time-derivative of $E_k$):  we use $\eqref{CSB}_2$ and an index interchange trick $i \leftrightarrow j$ to obtain
		\begin{align}
			\begin{aligned} \label{D-3-0}
				\frac{dE_k}{dt} &= \sum_{i=1}^N \langle \bv_i , \dot{\bv}_i \rangle \\
				&= \frac{\kappa_0}{N}\sum_{i,j=1}^{N}\psi(r_{ji})\big\langle  \bv_j- \bv_i,\bv_i\big\rangle +\displaystyle\frac{\kappa_1}{N}\sum_{i \neq j}^{N}\frac{1}{r_{ji}^2}\langle \bv_j-\bv_i,\br_{ji} \rangle\langle\br_{ji},\bv_i\rangle+ \frac{\kappa_2}{N}\sum_{i \neq j}^{N}\frac{r_{ji}-d^{\infty}_{ij}}{r_{ji}}\langle \br_{ji} ,\bv_i\rangle \\
                                &= -\frac{\kappa_0}{2N}\sum_{i,j =1}^{N}\psi(r_{ji}) \| \bv_j- \bv_i \|^2 -\frac{\kappa_1}{2N}\sum_{i,j =1}^{N}\frac{1}{r_{ji}^2}\langle \bv_j-\bv_i,\br_{ji} \rangle^2 - \frac{\kappa_2}{2N}\sum_{i\ne j}^{N}\frac{r_{ji}-d^{\infty}_{ij}}{r_{ji}}\langle \br_{ji} ,\bv_j-\bv_i \rangle.
			\end{aligned}
		\end{align}

\vspace{0.2cm}
		
		\noindent $\bullet$~Case B (Time-derivative of $E_p$): We use $\eqref{CSB}_2$ and  \eqref{D-1} to have
		\begin{equation} \label{D-2}
				\frac{dE_p}{dt} = \frac{\kappa_2}{2N}\sum_{i\ne j}^{N}(r_{ij} -d_{ij}^\infty) \frac{dr_{ij}}{dt}=  \frac{\kappa_2}{2N} \sum_{i\ne j}^{N}\frac{r_{ji}-d_{ji}^\infty}{r_{ji}}\langle \br_{ji},\bv_j-\bv_i\rangle.
		\end{equation}
				Finally, we combine \eqref{D-2} and \eqref{D-3-0} to induce
		\[
				\frac{dE}{dt} =-\frac{\kappa_0}{2N}\sum_{i,j =1}^{N}\psi(r_{ji})\| \bv_j- \bv_i\|^2-\displaystyle\frac{\kappa_1}{2N} \sum_{i\ne j}^{N} \Big \langle \bv_j-\bv_i, \frac{\br_{ji}}{r_{ji}} \Big \rangle^2
				=-P.
	       \]
		This leads to the desired estimate. 
		 \end{proof}

	\section{The Second-order Kruamoto model with a bonding force} \label{sec:3}
	\setcounter{equation}{0}
	\hspace{.1cm} In this section, we provide a sufficient framework which leads to the nonexistence of finite-time collision to \eqref{Ku-SB}, and then we derive the complete synchronization using energy estimate in Proposition \ref{KEE}.  \newline
	
Note that the forcing terms in \eqref{Ku-SB} can be decomposed as the sum of synchronizing force and bonding force:
	\begin{align}
		\begin{aligned} \label{C-1}
			&\frac{1}{N} \sum_{j=1}^{N} \Big [ \kappa_0 \cos(\theta_j - \theta_i) + \kappa_1 \Big] (\omega_j - \omega_i)+ \frac{\kappa_2}{N}\sum_{j=1}^{N} \Big[ |\theta_j - \theta_i|-\theta^{\infty}_{ij} \Big] \textup{sgn}(\theta_j - \theta_i) \\
			&\hspace{0.5cm}=  \underbrace{\frac{\kappa_0}{N} \sum_{j=1}^{N} \cos(\theta_j - \theta_i)(\omega_j - \omega_i)}_{\mbox{synchronization force}}+ \underbrace{\frac{\kappa_1}{N}\sum_{j=1}^{N}(\omega_j - \omega_i)  + \frac{\kappa_2}{N}\sum_{j=1}^{N}(|\theta_j - \theta_i|- \theta^{\infty}_{ij})\textup{sgn}(\theta_j - \theta_i)}_{\mbox{bonding force}}.
		\end{aligned}
	\end{align}
As mentioned in Introduction, the term $\textup{sgn}(\theta_j - \theta_i)$ in the R.H.S. of $\eqref{C-1}$ is discontinuous at the instant when $\theta_i = \theta_j$. Thus, as long as there are no finite-time collisions, the R.H.S of $\eqref{Ku-second}_1$  is still Lipschitz continuous and sublinear with respect to state variable. Therefore, a global well-posedness of classical solutions can be made in a classical framework based on the Cauchy-Lipschitz theory. In what follows,  we are interested in the following two issues:  \newline

	\begin{itemize}
		\item
		Issue A.1:~(Nonexistence of finite-time collisions):~we provide a sufficient framework leading to the nonexistence of finite-time collisions in terms of initial data and system parameters.  
		\vspace{0.1cm}
		\item
		Issue A.2:~(Emergence of complete synchronization):~we present a sufficient framework leading to complete synchronization which represents zero convergence of relative frequencies. 
	\end{itemize}
	
	\vspace{0.1cm}
	
	\subsection{Nonexistence of finite-time collisions} \label{sec:3.1}
	In this subsection, we study a framework leading to the nonexistence of finite-time collisions. For this, we set real numbers ${\mathcal U},~{\mathcal L}$ and a set ${\mathcal S}$:
	\begin{align}
		\begin{aligned} \label{C-1-1-1}
			{\mathcal U} &:= \max_{i \neq j} \theta^{\infty}_{ij}+\sqrt{\frac{2N\mathcal{E}(0)}{\kappa_2}},\quad{\mathcal  L} :=\min_{i \neq j} \theta^{\infty}_{ij}-\sqrt{\frac{2N\mathcal{E}(0)}{\kappa_2}}, \\
			{\mathcal S} &:= \{ (\Theta, W) \in \bbr^{2N}:~|\theta_i - \theta_j| < {\mathcal U} < \pi  \}.  
		\end{aligned}
	\end{align}
	Note that ${\mathcal U}$ and ${\mathcal L}$ in \eqref{C-1-1-1} depend only on system parameters and initial data. First, we show that the set ${\mathcal S}$ is positively invariant along the dynamics \eqref{Ku-SB}.
	\begin{lemma}\label{L3.1}
		\emph{(Positively invariant set)}
		Suppose that initial data and system parameters satisfy
		\begin{equation} \label{C-1-2}
			(\Theta^0, W^0) \in {\mathcal S}, \quad  \kappa_0 \cos {\mathcal U} + \kappa_1 > 0, \quad \kappa_2 > 0,
		\end{equation}
		and for some $\tau\in(0,\infty]$, let $\{(\theta_i, \omega_i) \}$ be a solution to \eqref{Ku-SB} in the time-interval $[0,\tau)$. Then, one has
		\[ (\Theta(t), W(t)) \in {\mathcal S}, \quad \forall~t \in [0, \tau). \]
	\end{lemma}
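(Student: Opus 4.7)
The plan is a continuity/bootstrap argument on the ``exit time''
\[
T^* := \sup\{ t \in [0, \tau) :~(\Theta(s), W(s)) \in {\mathcal S}~\text{for all}~s \in [0, t] \}.
\]
Openness of ${\mathcal S}$, the hypothesis $(\Theta^0, W^0) \in {\mathcal S}$, and continuity of the solution give $T^* > 0$. I want to show $T^* = \tau$, so I argue by contradiction and suppose $T^* < \tau$. Then by continuity, at least one pair $(i_*, j_*)$ must saturate the bound at the exit time, namely $|\theta_{j_*}(T^*) - \theta_{i_*}(T^*)| = {\mathcal U}$, while on $[0, T^*)$ every pair obeys $|\theta_j - \theta_i| < {\mathcal U} < \pi$.

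On $[0, T^*)$, monotonicity of $\cos$ on $[0, \pi]$ together with the hypothesis $\kappa_0 \cos {\mathcal U} + \kappa_1 > 0$ yields the uniform lower bound
\[
\kappa_0 \cos(\theta_j(t) - \theta_i(t)) + \kappa_1 \geq \kappa_0 \cos {\mathcal U} + \kappa_1 > 0, \quad t \in [0, T^*),
\]
so the production rate ${\mathcal P}$ of Proposition \ref{KEE} is nonnegative and ${\mathcal E}(t) \leq {\mathcal E}(0)$ there, extended to $T^*$ by continuity. Isolating the contribution of the saturating pair to ${\mathcal E}_p(T^*)$ then gives
\[
\frac{\kappa_2}{2N}\bigl({\mathcal U} - \theta^{\infty}_{i_* j_*}\bigr)^2 \leq {\mathcal E}_p(T^*) \leq {\mathcal E}(T^*) \leq {\mathcal E}(0),
\]
which rearranges to
\[
{\mathcal U} \leq \theta^{\infty}_{i_* j_*} + \sqrt{\frac{2N {\mathcal E}(0)}{\kappa_2}} \leq \max_{k \neq l} \theta^{\infty}_{kl} + \sqrt{\frac{2N {\mathcal E}(0)}{\kappa_2}} = {\mathcal U}.
\]

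Every inequality in this chain must therefore be an equality, so ${\mathcal E}_p(T^*) = {\mathcal E}(T^*) = {\mathcal E}(0)$, forcing ${\mathcal E}_k(T^*) = 0$ and $\int_0^{T^*} {\mathcal P}(s)\,ds = 0$. Since the coupling weight $\kappa_0 \cos(\theta_j - \theta_i) + \kappa_1$ is bounded below by a strictly positive constant on $[0, T^*]$, the vanishing of $\int {\mathcal P}$ forces $\omega_j(s) = \omega_i(s)$ for all $i, j \in [N]$ and $s \in [0, T^*]$. Combining this with the conservation $\sum_i \omega_i(t) \equiv \sum_i \omega_i^0$ from Lemma \ref{L2.1}(ii) and with ${\mathcal E}_k(T^*) = 0$, I obtain $\omega_i \equiv 0$ on $[0, T^*]$; then $\theta_j(t) - \theta_i(t)$ is constant in $t$, so $|\theta_{j_*}^0 - \theta_{i_*}^0| = {\mathcal U}$, contradicting $(\Theta^0, W^0) \in {\mathcal S}$. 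I expect the most delicate step to be this final rigidity argument: extracting from the single saturation $|\theta_{j_*}(T^*) - \theta_{i_*}(T^*)| = {\mathcal U}$ enough structure (vanishing kinetic energy at $T^*$ and vanishing production on the whole interval $[0,T^*]$) to propagate backwards and contradict the strict initial containment.
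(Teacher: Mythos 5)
Your argument is correct and follows essentially the same route as the paper's proof: an exit-time/continuity argument combined with the energy dissipation of Proposition \ref{KEE}, concluding by contradiction at the first saturation time. The only cosmetic difference is that you fold the paper's explicit case split (either $\frac{d\mathcal{E}}{dt}\equiv 0$, handled by a rigidity argument, or a strict decrease somewhere, handled by the potential-energy bound) into a single chain of inequalities forced to be equalities, which then yields the same rigidity contradiction.
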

	\begin{proof} The positive invariance of ${\mathcal S}$ will be verified by the continuous induction and dissipation estimate for ${\mathcal E}$ in Proposition \ref{KEE}. It follows from \eqref{C-1} that 
		\begin{align}
			\begin{aligned} \label{C-1-2-1}
				\frac{d\mathcal{E}}{dt}&= -\frac{\kappa_0}{2N}\sum_{i,j=1}^N \cos(\theta_j - \theta_i)  |\dot{\theta}_j-\dot{\theta}_i|^2-\frac{\kappa_1}{2N}\sum_{i,j=1}^N|\dot{\theta}_j-\dot{\theta}_i|^2\\
				&=-\frac{1}{2N}\sum_{i,j=1}^N \Big[ \kappa_0 \cos|\theta_j - \theta_i| + \kappa_1 \Big] |\dot{\theta}_j-\dot{\theta}_i|^2  \le0.
			\end{aligned}
		\end{align}
		Suppose that $\Theta^0$ lies in the set ${\mathcal S}$:
		\begin{equation} \label{C-1-2-2}
			|\theta^0_i - \theta^0_j| < {\mathcal U}  < \pi. 
		\end{equation}
		Now, we consider a subset ${\mathcal T}$ of $[0, \tau)$:
		\[ {\mathcal T}:=\{0< T <\tau:~~\Theta(t) \in {\mathcal S}, \quad t\in[0, T)\}. \]
		By the continuity of $\Theta = \Theta(t)$ and \eqref{C-1-2-2}, there exists $\delta > 0$ such that 
		\[ \Theta(t) \in {\mathcal S}, \quad t \in [0, \delta).  \]
		Thus, $\delta \in {\mathcal T}$, i.e., it is nonempty. Now, we define the supremum of the set ${\mathcal T}$ by $T^*$:
		\[T^*:=\sup {\mathcal T}.\]
		Then, one has for $t\in[0,T^*)$,
			\begin{equation} \label{C-1-2-3}
			|\theta_j(t) - \theta_i(t)| < {\mathcal U} < \pi \quad \mbox{and} \quad\kappa_0 \cos|\theta_j - \theta_i| + \kappa_1 \geq  \kappa_0 \cos {\mathcal U} + \kappa_1  > 0.
		\end{equation}
		Now, we claim:
		\[  T^*=\tau. \]
		{\it Proof of claim}: Suppose not, i.e., $T^*<\tau$. Then, there exist two indices $i^*,j^* \in [N]$ such that
		\begin{equation} \label{C-1-2-4}
			|\theta_{j^*}(T^*)-\theta_{i^*}(T^*)|=  {\mathcal U}.
		\end{equation}
		To derive a contradiction, since $\frac{d{\mathcal E}}{dt} \leq 0$, we consider the following two cases:
		\[
			\mbox{either}\quad\frac{d{\mathcal E}}{dt} = 0~~\mbox{for all $t \in [0, T^*)$} \quad \mbox{or}\quad\exists~T \in [0, T^*)\quad\mbox{such that} \quad \frac{d{\mathcal E}}{dt} \Big|_{t = T} < 0. 
		\]
		Below, we consider the following two cases. \newline
		
		\noindent $\bullet$~Case A: Suppose that 
		\[ \frac{d\mathcal{E}(t)}{dt} \equiv 0, \quad  \forall~t \in [0, T^*). \]
		Then, it follows from \eqref{C-1-2-1} and $\eqref{C-1-2-3}_2$ that 
		\[ \dot{\theta}_i(t)=\dot{\theta}_j(t),~\forall~i, j \in [N] \quad \mbox{and} \quad \forall~t\in[0,T^*). \]
		In this case, one obtains
		 \[ |\dot{\theta}_{j^*}(t)-\dot{\theta}_{i^*}(t)|=0,\mbox{ for all }t\in[0,T^*).\]
		 This implies
		\[|\theta_{j^*}(T^*)-\theta_{i^*}(T^*)|=|\theta_{j^*}^0 -\theta_{i^*}^0|< {\mathcal U},\]
		which gives a contradiction to \eqref{C-1-2-4}.\\
		
		\noindent $\bullet$~Case B:~Suppose there exists $T\in[0,T^*)$ such that 
		\[ \frac{d\mathcal{E}}{dt} \Big|_{t = T} <0. \]
		Since $ \frac{d\mathcal{E}}{dt} \Big|_{t = T}<0$, one has 
		\[ \mathcal{E}(T^*)<\mathcal{E}(0). \]
		On the other hand, it follows from \eqref{B-0-5} that 
		\begin{align*}
		\begin{aligned}
		\mathcal{E}_p(T^*) &=\frac{\kappa_2}{4N}\sum_{i,j =1}^{N}(|\theta_j(T^*)-\theta_i(T^*)|-\theta^{\infty}_{ij})^2  \\
		&= \frac{\kappa_2}{2N}\sum_{1\leq i < j \leq N}(|\theta_j(T^*)-\theta_i(T^*)|-\theta^{\infty}_{ij})^2 
		\leq \mathcal{E}(T^*)<\mathcal{E}(0).
		\end{aligned}
		\end{align*}
		Thus, we have
			\[ \frac{\kappa_2}{2N}(|\theta_j(T^*)-\theta_i(T^*)|-\theta^{\infty}_{ij})^2<\mathcal{E}(0), \quad i \neq j. \]
		Therefore, we have
		\[
			{\mathcal L} \leq \theta_{ij}^\infty - \sqrt{\frac{2N\mathcal{E}(0)}{\kappa_2}} \leq |\theta_j(T^*)-\theta_i(T^*)|< \theta_{ij}^\infty+\sqrt{\frac{2N\mathcal{E}(0)}{\kappa_2}}\leq {\mathcal U}.
		\]
		Again, this gives a contradiction to \eqref{C-1-2-4}, and one has $T^*=\tau$. Thus, we get the desired result.  
		 \end{proof}
	Now we are ready to provide our first main result regarding to the nonexistence of finite-time collisions. 
	\begin{theorem}\label{T3.1}
		\emph{(Nonexistence of finite-time collisions)} Suppose that initial data and system parameters satisfy
		\begin{align} \label{C-1-3}
			\begin{aligned}
				(\Theta^0, W^0) \in {\mathcal S},  \quad \mathcal{E}(0)<\frac{\kappa_2(\min_{i \ne j} \theta^{\infty}_{ij})^2}{2N}, \quad \kappa_0 \cos{\mathcal U} + \kappa_1 > 0, \quad \kappa_2 > 0,
			\end{aligned}
		\end{align}
		and for $\tau\in(0,\infty]$, let $\{ \theta_i \}_{i=1}^N$ be a smooth solution to \eqref{Ku-SB} in the time-interval $[0,\tau)$.  Then, there is no collision between each pair of all inter-particles along \eqref{Ku-SB} in the time-interval $[0, \tau]$. 
	\end{theorem}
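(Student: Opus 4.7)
The plan is to leverage Lemma \ref{L3.1} together with the energy identity of Proposition \ref{KEE} to bootstrap an upper bound on $|\theta_j(t)-\theta_i(t)|$ into a two-sided bound, the lower side of which is exactly what rules out finite-time collisions. The hypothesis \eqref{C-1-3} contains all assumptions of Lemma \ref{L3.1}, plus the crucial quantitative smallness condition $\mathcal{E}(0) < \frac{\kappa_2}{2N}(\min_{i\ne j}\theta^\infty_{ij})^2$, which is precisely what turns the lower bound $\mathcal{L}$ defined in \eqref{C-1-1-1} into a strictly positive quantity. So the theorem should follow quite quickly once these two inputs are combined.

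First, I would invoke Lemma \ref{L3.1} to conclude that $(\Theta(t),W(t))\in\mathcal{S}$ for every $t\in[0,\tau)$. This yields the pointwise bound $|\theta_j(t)-\theta_i(t)|<\mathcal{U}<\pi$ for all $i\ne j$, and by monotonicity of cosine on $[0,\pi]$ one has $\cos|\theta_j(t)-\theta_i(t)|\ge \cos\mathcal{U}$, so the coefficient $\kappa_0\cos|\theta_j-\theta_i|+\kappa_1\ge \kappa_0\cos\mathcal{U}+\kappa_1>0$ throughout $[0,\tau)$. Consequently, the production rate $\mathcal{P}(s)$ from Proposition \ref{KEE} is nonnegative on $[0,\tau)$, which by the energy identity \eqref{B-0-6} gives $\mathcal{E}(t)\le \mathcal{E}(0)$ for all $t\in[0,\tau)$.

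Next, since $\mathcal{E}_p\le\mathcal{E}$, the potential part of the energy satisfies, for each fixed pair $i\ne j$,
\[
\frac{\kappa_2}{2N}\bigl(|\theta_j(t)-\theta_i(t)|-\theta^\infty_{ij}\bigr)^{2}\ \le\ \mathcal{E}_p(t)\ \le\ \mathcal{E}(0).
\]
Rearranging, one extracts the two-sided bound
\[
\theta^\infty_{ij}-\sqrt{\tfrac{2N\mathcal{E}(0)}{\kappa_2}}\ \le\ |\theta_j(t)-\theta_i(t)|\ \le\ \theta^\infty_{ij}+\sqrt{\tfrac{2N\mathcal{E}(0)}{\kappa_2}},
\]
whose left-hand side is at least $\mathcal{L}=\min_{i\ne j}\theta^\infty_{ij}-\sqrt{2N\mathcal{E}(0)/\kappa_2}$. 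The hypothesis $\mathcal{E}(0)<\frac{\kappa_2}{2N}(\min_{i\ne j}\theta^\infty_{ij})^{2}$ is exactly equivalent to $\mathcal{L}>0$, so $|\theta_j(t)-\theta_i(t)|\ge \mathcal{L}>0$ uniformly on $[0,\tau)$, which rules out any finite-time collision and proves the theorem.

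There is no single hard step here; the substantive work was done in Lemma \ref{L3.1}. The only subtlety I anticipate is the circular-looking dependence between needing positivity of the dissipation coefficient (to ensure $\mathcal{E}$ is nonincreasing) and needing the upper bound on $|\theta_j-\theta_i|$ (to guarantee that coefficient is positive). This loop is broken cleanly by the continuous-induction argument inside Lemma \ref{L3.1}; once that is invoked, the remainder of the proof is a direct energy comparison and an arithmetic check that \eqref{C-1-3} makes $\mathcal{L}$ positive.
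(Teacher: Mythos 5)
Your proposal is correct and rests on the same two ingredients as the paper's proof: the positive invariance of ${\mathcal S}$ from Lemma \ref{L3.1} guaranteeing a nonnegative production rate, hence $\mathcal{E}(t)\le\mathcal{E}(0)$, combined with the pointwise bound $\frac{\kappa_2}{2N}\bigl(|\theta_j-\theta_i|-\theta^\infty_{ij}\bigr)^2\le\mathcal{E}_p(t)$. The only difference is presentational: the paper argues by contradiction at a hypothetical collision time $\tau-$, whereas you derive the uniform lower bound $|\theta_j(t)-\theta_i(t)|\ge{\mathcal L}>0$ directly --- which is exactly the argument the paper defers to Corollary \ref{C3.1}, so your version proves the theorem and that corollary in one pass.
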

	\begin{proof}
		Recall the potential energy:
		\begin{equation} \label{C-1-4}
			\mathcal{E}_p(t):=\frac{\kappa_2}{2N}\sum_{1 \leq i < j \leq N} \Big(|\theta_i(t)-\theta_j(t)|-\theta^{\infty}_{ij} \Big)^2. 
		\end{equation}
		Suppose there exists a collision between the pair $(i_*,j_*)$ with $i_*\ne j_*$ at time $t = \tau-$:
		\[  \theta_{i^*}(\tau-) = \theta_{j^*}(\tau-). \]
		Then, it follows from \eqref{B-0-5} that
		\begin{equation} \label{C-1-5}
				\mathcal{E}_p(\tau-) \geq \frac{\kappa_2}{2N} \Big(|\theta_{i_*}(\tau-)-\theta_{j_*}(\tau-)|-\theta^{\infty}_{i_*j_*} \Big)^2= \frac{\kappa_2 (\theta^{\infty}_{i_{*} j_{*}})^2}{2N}\geq \frac{\kappa_2(\min_{i \ne j}\theta^{\infty}_{ij})^2}{2N}.
		\end{equation}
		Now, we use Proposition \ref{KEE} and $\eqref{C-1-3}$ to find 
		\begin{equation} \label{C-1-6}
			\mathcal{E}(t)\leq \mathcal{E}(0)<\frac{\kappa_2(\min_{i \ne j} \theta^{\infty}
				_{ij})^2}{2N}, \quad t \in [0, \tau). 
		\end{equation}
		Finally, we combine \eqref{C-1-5} and \eqref{C-1-6} to find 
		\begin{align*}
			\frac{\kappa_2(\min_{i \ne j} \theta^{\infty}_{ij})^2}{2N}& \leq \mathcal{E}_p(\tau-) \leq \mathcal{E}(\tau-)\leq \mathcal{E}(0) <\frac{\kappa_2(\min_{i \ne j} \theta^{\infty}_{ij})^2}{2N},
		\end{align*} 
		which is contradictory. Hence, we obtain the desired result.
		 \end{proof}\vspace{-.4cm}
	\begin{remark} 
	\begin{enumerate}
	\item
		By combining Theorem \ref{T3.1} and continuous induction argument, one can show that under the framework \eqref{C-1-3}, finite-time collisions will not occur and then, we have a global existence of solution to \eqref{Ku-SB}. On the other hand, note that $\eqref{C-1-3}_2$ is equivalent to 
		\[   \min_{i \neq j} \theta_{ij}^{\infty}  -\sqrt{\frac{2N\mathcal{E}(0)}{\kappa_2}}  > 0. \]
		Hence, one has the strict positivity of ${\mathcal L}$ in \eqref{C-1-1-1}.
		
		\vspace{0.2cm}
		
\item 
Note that  in \eqref{C-1-2} and \eqref{C-1-3}, we required that the initial phase configuration $\{ \theta_i^0 \}$ satisfy implicit relations:
\begin{equation} \label{New-Eq-0}
|\theta^0_i - \theta^0_j| <  {\mathcal U} < \pi \quad \mbox{and} \quad \mathcal{E}(0)<\frac{\kappa_2(\min_{i \ne j} \theta^{\infty}_{ij})^2}{2N}.
\end{equation} 
Now we use the explicit representations for ${\mathcal E}(0)$ and ${\mathcal U}$:
\[
\mathcal{E}(0) =\frac{1}{2}\sum_{i=1}^{N} |\omega_i|^2+\frac{\kappa_2}{2N}\sum_{1 \leq i < j \leq N} \big(|\theta_j^0-\theta_i^0|-\theta_{ij}^\infty\big)^2, \quad {\mathcal U} =\max_{i\ne j}\theta_{ij}^\infty+\sqrt{\frac{2N\mathcal{E}(0)}{\kappa_2}}, \]
to make \eqref{New-Eq-0} to be explicit: \newline

\noindent $\diamond$~(The condition $\ref{New-Eq-0}_1$):~We use $\displaystyle {\mathcal E}(0) \geq \frac{\kappa_2}{2N}\sum_{1 \leq i < j \leq N} \big(|\theta_j^0-\theta_i^0|-\theta_{ij}^\infty\big)^2$ to find
\[
0<\pi-{\mathcal U} = \pi - \Big[   \max_{i\ne j}\theta_{ij}^\infty+\sqrt{\frac{2N\mathcal{E}(0)}{\kappa_2}}  \Big] \leq \pi-\max_{i\ne j}\theta_{ij}^\infty-\sqrt{ \sum_{1 \leq i < j \leq N} \big(|\theta_j^0-\theta_i^0 |-\theta_{ij}^\infty\big)^2}.
\]
This implies 
\begin{equation} \label{New-Eq-0-1}
\big(\pi-\max_{i \neq j}\theta_{ij}^\infty\big)^2\ge\sum_{1 \leq i <j \leq N} \Big(|\theta_j^0-\theta_i^0 |-\theta_{ij}^\infty \Big)^2.
\end{equation}

\noindent (The condition $\ref{New-Eq-0}_2$):~Note that 
\[ 0<\frac{\kappa_2\min_{i,j}(\theta_{ij}^\infty)^2}{2N}-\mathcal{E}(0)<\frac{\kappa_2}{2N}\Big(\min_{i\ne j}(\theta_{ij}^\infty)^2- \sum_{1 \leq i <j \leq N}\big(|\theta_j^0-\theta_i^0|-\theta_{ij}^\infty\big)^2\Big).
\]
This implies 
\begin{equation} \label{New-Eq-0-2}
\min_{i\ne j}(\theta_{ij}^\infty)^2\ge\sum_{1 \leq i < j \leq N}\big(|\theta_j^0-\theta_i^0|-\theta_{ij}^\infty\big)^2.
\end{equation}
Finally, we combine \eqref{New-Eq-0-1} and \eqref{New-Eq-0-2} to obtain 
	\begin{equation}\label{New-Eq-0-3}
	\sum_{1 \leq i <j \leq N}^N\big(|\theta_j^0 -\theta_i^0|-\theta_{ij}^\infty\big)^2 \leq	\min \Big \{ \min_{\i\ne j}(\theta_{ij}^\infty)^2,~(\pi-\max_{i \neq j}\theta_{ij}^\infty)^2\Big \}.
	\end{equation}
\end{enumerate}		
	\end{remark}
As a direct corollary of Theorem \ref{T3.1}, we have a uniform positive lower bound for $|\theta_i - \theta_j|$.
	\begin{corollary}\label{C3.1}
		Suppose that initial data and coupling strengths satisfy \eqref{C-1-3}, and let $\{ \theta_i \}$ be a global smooth solution to \eqref{Ku-SB}. Then, one has 
		\[  \inf_{0 \leq t < \infty} \min_{i \neq j} |\theta_j(t) -\theta_i(t)| \geq {\mathcal  L}
		> 0. \]
	\end{corollary}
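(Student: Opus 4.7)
The plan is to deduce the uniform lower bound directly from the energy dissipation estimate of Proposition \ref{KEE} together with Theorem \ref{T3.1}. Under the framework \eqref{C-1-3}, Theorem \ref{T3.1} guarantees that no finite-time collisions occur, so the classical solution extends globally and the quantities $|\theta_j(t) - \theta_i(t)|$ are well-defined and strictly positive for all $t \in [0,\infty)$ and each $i \neq j$. It therefore remains to quantify how small $|\theta_j(t) - \theta_i(t)|$ can become, and this is where the total energy bound will be the decisive tool.

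First I would observe that Proposition \ref{KEE} yields the monotone dissipation $\mathcal{E}(t) \leq \mathcal{E}(0)$ on $[0,\infty)$, since the production functional $\mathcal{P}$ is nonnegative by virtue of $\kappa_0 \cos {\mathcal U} + \kappa_1 > 0$ and Lemma \ref{L3.1} (which ensures $\cos|\theta_j - \theta_i| \geq \cos {\mathcal U}$ along the flow). Next, from the explicit form of $\mathcal{E}_p$ in \eqref{B-0-5} and the non-negativity of $\mathcal{E}_k$, one can isolate any single pair $(i,j)$ with $i \neq j$ to obtain
\[
\frac{\kappa_2}{2N} \bigl(|\theta_j(t) - \theta_i(t)| - \theta^{\infty}_{ij}\bigr)^2 \;\leq\; \mathcal{E}_p(t) \;\leq\; \mathcal{E}(t) \;\leq\; \mathcal{E}(0).
\]
Taking square roots and applying the reverse triangle inequality then gives
\[
|\theta_j(t) - \theta_i(t)| \;\geq\; \theta^{\infty}_{ij} - \sqrt{\tfrac{2N\mathcal{E}(0)}{\kappa_2}} \;\geq\; \min_{i \neq j} \theta^{\infty}_{ij} - \sqrt{\tfrac{2N\mathcal{E}(0)}{\kappa_2}} \;=\; {\mathcal L}.
\]

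Finally, taking the minimum over all pairs $i \neq j$ and the infimum over $t \in [0, \infty)$ delivers the desired inequality. Strict positivity ${\mathcal L} > 0$ is immediate from the hypothesis $\mathcal{E}(0) < \frac{\kappa_2 (\min_{i \neq j} \theta^{\infty}_{ij})^2}{2N}$ in \eqref{C-1-3}, which rearranges precisely to $\min_{i \neq j}\theta_{ij}^\infty > \sqrt{2N\mathcal{E}(0)/\kappa_2}$. There is no real obstacle here: the argument is a one-line consequence of the energy inequality once Theorem \ref{T3.1} is in hand, since all the delicate work (continuous-induction and the invariance of ${\mathcal S}$) has already been absorbed into Lemma \ref{L3.1} and Theorem \ref{T3.1}.
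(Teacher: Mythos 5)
Your proposal is correct and follows essentially the same route as the paper: monotone dissipation of $\mathcal{E}$ from Proposition \ref{KEE}, isolating a single pair in $\mathcal{E}_p$ to get $\frac{\kappa_2}{2N}\bigl(|\theta_j(t)-\theta_i(t)|-\theta^{\infty}_{ij}\bigr)^2 \leq \mathcal{E}(0)$, and then reading off the lower bound ${\mathcal L}$ from its definition. The only (harmless) difference is that you spell out the nonnegativity of the production functional via Lemma \ref{L3.1}, which the paper leaves implicit.
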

	\begin{proof}  Since $|\theta_j-\theta_i|-\theta^{\infty}_{ij}$ is invariant under the index exchange transformation $(i, j)\leftrightarrow(j, i)$, the potential energy satisfies
			\[  \mathcal{E}_p =\frac{\kappa_2}{2N} \sum_{1 \leq i < j \leq N} \Big(|\theta_j-\theta_i|- \theta^{\infty}_{ij} \Big)^2. \] 
		Now, we use the above relation and Proposition \ref{KEE} to have the following relation: 
		\[  {\mathcal E}(0) \geq {\mathcal E}(t) \geq {\mathcal E}_p(t) \geq  \frac{\kappa_2}{2N} \Big(|\theta_j(t)-\theta_i(t)|- \theta^{\infty}_{ij} \Big)^2,\]
		for $i\ne j$. This and the definition of ${\mathcal L}$ yield 
		\[
			{\mathcal L} = \min_{k \neq l} \theta^{\infty}_{kl}  -\sqrt{\frac{2N {\mathcal E}(0)}{\kappa_2}}  \leq  \theta^{\infty}_{ij}  -\sqrt{\frac{2N {\mathcal E}(0)}{\kappa_2}} \leq   |\theta_j(t) -\theta_i(t)|,\quad\mbox{for}\quad t\in(0,\infty). 
		\]
	This implies the desired estimate.
		 \end{proof}
	\subsection{Complete synchronization} \label{sec:3.2}
	In this subsection, we present complete synchronization to \eqref{Ku-SB} by using Babalat's lemma and energy estimate in Proposition \ref{KEE}. First, we recall various concepts for synchronization.
	\begin{definition}  \label{D3.1}
		Let $\Theta = \{ \theta_i \}$ be a time-dependent phase ensemble. 
		\begin{enumerate}
			\item
			$\Theta = \Theta(t)$ is a phase-locked state if all relative phase differences are constant:
			\[ | \theta_i(t) - \theta_j(t) |= |\theta_i^{0} - \theta_j^{0}|,  \quad\forall~t \geq 0, ~~\forall~i, j \in [N]. \] 
			\item
			$\Theta = \Theta(t)$ exhibits (asymptotic) complete phase-locking, if the relative phase differences converge as $t \to \infty$:
			\[ \exists \lim_{t \to \infty} (\theta_i(t) - \theta_j(t)), \quad \forall~i, j \in [N]. \]
			\item
			$\Theta = \Theta(t)$ exhibits complete synchronization, if the relative frequency differences converge to zero as $t \to \infty$:
			\[  \lim_{t \to \infty} \max_{i,j}  |{\dot \theta}_i(t) - {\dot \theta}_j(t)| = 0. \]
	\end{enumerate}
	\end{definition}
Now, we recall the Babalat lemma in \cite{B} to be used in Theorem \ref{T3.2} and Theorem \ref{T4.2}.
	\begin{lemma}\emph{\cite{B}}\label{Barbalat}~\emph{(Barbalat's lemma)}
		Let $f:(0,\infty)\to\mathbb{R}$ be a uniformly continuous function such that 
		\[ \int_0^{\infty} f(t) dt < \infty. \]
		Then, one has
		\[ \lim_{t \to \infty} f(t) = 0. \]
	\end{lemma}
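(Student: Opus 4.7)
The plan is to prove this classical result by contradiction, exploiting the interplay between uniform continuity (which prevents $f$ from having isolated spikes) and the Cauchy criterion for the improper integral (which says the tail integrals are small).

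First, I would suppose that the conclusion fails, that is, $f(t)\not\to 0$ as $t\to\infty$. Negating the definition of the limit produces some $\varepsilon>0$ and an unbounded strictly increasing sequence $\{t_n\}\subset(0,\infty)$ with $|f(t_n)|\ge \varepsilon$ for every $n$. Passing to a subsequence if needed, I may also arrange that $t_{n+1}>t_n+1$, so the spike times are well separated.

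Next, I would invoke the uniform continuity of $f$ to choose $\delta\in(0,1)$ such that $|f(t)-f(s)|<\varepsilon/2$ whenever $|t-s|<\delta$. On each interval $[t_n,t_n+\delta]$ this forces $|f(t)|\ge \varepsilon/2$, and since $f$ is continuous but bounded away from zero on that interval, it must keep a constant sign there. Therefore
\[
\left|\int_{t_n}^{t_n+\delta} f(t)\,dt\right|\;\ge\;\frac{\varepsilon\delta}{2}.
\]
Finally, I would use the Cauchy criterion for the convergent improper integral $\int_0^\infty f(t)\,dt$: for any $\eta>0$ there exists $T>0$ such that $\bigl|\int_a^b f(t)\,dt\bigr|<\eta$ whenever $b\ge a\ge T$. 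Choosing $\eta:=\varepsilon\delta/4$ and taking $n$ large enough that $t_n\ge T$ yields a direct contradiction with the lower bound above, completing the proof.

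The main (and essentially only) subtlety lies in the second step: the pointwise bound $|f(t)|\ge\varepsilon/2$ on $[t_n,t_n+\delta]$ is not by itself sufficient to estimate the integral from below, because cancellation could in principle shrink $|\int f|$. The rescue comes from upgrading this to a definite-sign statement using continuity, which is the step that makes the Cauchy contradiction decisive. Everything else is essentially bookkeeping with sequences and the $\varepsilon$–$\delta$ definition of uniform continuity.
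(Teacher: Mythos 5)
Your proof is correct, and it is the standard textbook argument for Barbalat's lemma; the paper itself gives no proof, citing Barbalat's original article, so there is nothing to compare against. You correctly identify and handle the one real subtlety (ruling out sign cancellation on $[t_n,t_n+\delta]$ via continuity, so that $\bigl|\int_{t_n}^{t_n+\delta} f\bigr|=\int_{t_n}^{t_n+\delta}|f|\ge \varepsilon\delta/2$). The only caveat worth noting is that your Cauchy-criterion step requires the improper integral $\int_0^\infty f(t)\,dt$ to \emph{converge} (i.e., $\lim_{T\to\infty}\int_0^T f$ exists), which is slightly stronger than the literal hypothesis ``$<\infty$'' as written; in the paper's applications $f$ is a sum of squares, hence nonnegative, so the two readings coincide and your argument applies verbatim.
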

	\begin{lemma}\label{L3.3}
		Suppose that initial data and system parameters satisfy \eqref{C-1-3}, and let $\{ \theta_i \}$ be a global smooth solution to \eqref{Ku-SB}. Then, for $i \neq j$,  one has
		\[  \Big \| \frac{d}{dt}\left|\omega_j(t)-\omega_i(t)\right|^2 \Big \|_{L^{\infty}(\bbr_+)} <\infty. \]
		Thus, the map $t\mapsto\sum_{i,j}|w_j(t)-w_i(t)|^2$ is uniformly continuous.
	\end{lemma}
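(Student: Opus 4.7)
The plan is to bound $\frac{d}{dt}|\omega_j-\omega_i|^2 = 2(\omega_j-\omega_i)(\dot{\omega}_j-\dot{\omega}_i)$ in $L^\infty(\mathbb{R}_+)$ by controlling the two factors separately using the a priori estimates already established.

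First I would control the frequency factor. By Proposition \ref{KEE}, the total energy is nonincreasing, so in particular the kinetic component $\mathcal{E}_k(t)=\tfrac{1}{2}\sum_{i=1}^N|\omega_i(t)|^2 \le \mathcal{E}(0)$ for all $t\ge 0$. Hence $|\omega_i(t)|\le \sqrt{2\mathcal{E}(0)}$ for every $i$ and $t$, and consequently
\[
\sup_{t\ge 0}\max_{i,j}|\omega_j(t)-\omega_i(t)| \le 2\sqrt{2\mathcal{E}(0)}.
\]

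Next I would bound $\dot{\omega}_i$ uniformly in time by inspecting the right-hand side of \eqref{Ku-SB}. The synchronization part is bounded in absolute value by $\tfrac{\kappa_0+\kappa_1}{N}\sum_j|\omega_j-\omega_i|\le 2(\kappa_0+\kappa_1)\sqrt{2\mathcal{E}(0)}$ by the previous paragraph. For the bonding part, Lemma \ref{L3.1} together with the hypothesis \eqref{C-1-3} gives $(\Theta(t),W(t))\in\mathcal{S}$ for all $t\ge 0$, i.e.\ $|\theta_j(t)-\theta_i(t)|<\mathcal{U}$ uniformly in $t$. Therefore $\bigl||\theta_j-\theta_i|-\theta_{ij}^\infty\bigr|\le \mathcal{U}+\max_{k\neq l}\theta_{kl}^\infty$, and the bonding contribution is bounded by $\kappa_2(\mathcal{U}+\max_{k\neq l}\theta_{kl}^\infty)$. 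Combining the two estimates yields a constant $C=C(\kappa_0,\kappa_1,\kappa_2,\mathcal{E}(0),\{\theta_{ij}^\infty\})$ with
\[
\sup_{t\ge 0}\max_i |\dot{\omega}_i(t)|\le C,\qquad\text{and hence}\qquad \sup_{t\ge 0}\max_{i,j}|\dot{\omega}_j(t)-\dot{\omega}_i(t)|\le 2C.
\]

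Multiplying the two bounds gives $\bigl\|\tfrac{d}{dt}|\omega_j-\omega_i|^2\bigr\|_{L^\infty(\mathbb{R}_+)} \le 8C\sqrt{2\mathcal{E}(0)}<\infty$, which is the first claim. Since each map $t\mapsto|\omega_j(t)-\omega_i(t)|^2$ is then Lipschitz on $\mathbb{R}_+$, and uniform continuity is preserved by finite sums, the finite sum $t\mapsto\sum_{i,j}|\omega_j(t)-\omega_i(t)|^2$ is uniformly continuous. No real obstacle is expected here: the only nontrivial inputs are the energy dissipation bound of Proposition \ref{KEE} and the invariant-region estimate of Lemma \ref{L3.1}, both already in hand.
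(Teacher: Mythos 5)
Your proof is correct and follows essentially the same route as the paper: express $\tfrac{d}{dt}|\omega_j-\omega_i|^2$ via the ODE, then bound every term uniformly in time using the energy dissipation of Proposition \ref{KEE}. The only cosmetic difference is that you control the bonding term $\bigl||\theta_j-\theta_i|-\theta_{ij}^\infty\bigr|$ through the invariant region of Lemma \ref{L3.1}, whereas the paper reads the same bound off the uniform boundedness of the potential energy $\mathcal{E}_p$; both are immediate consequences of the same a priori estimates.
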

	\begin{proof} 
		Note that $\omega_i$ and $\omega_j$ satisfy 
		\begin{align*}
			\begin{aligned}
				{\dot \omega}_i  &=   \frac{1}{N} \sum_{k=1}^{N} \Big [ \kappa_0 \cos(\theta_k - \theta_i) + \kappa_1 \Big] (\omega_k - \omega_i)+ \frac{\kappa_2}{N}\sum_{k=1}^{N} \Big[ |\theta_k - \theta_i|- \theta^{\infty}_{ik} \Big] \textup{sgn}(\theta_k - \theta_i),  \\
				{\dot \omega}_j &=   \frac{1}{N} \sum_{k=1}^{N} \Big [ \kappa_0 \cos(\theta_k - \theta_j) + \kappa_1 \Big] (\omega_k - \omega_j) + \frac{\kappa_2}{N}\sum_{k=1}^{N} \Big[ |\theta_k - \theta_j|- \theta^{\infty}_{jk} \Big] \textup{sgn}(\theta_k - \theta_j).
			\end{aligned}
		\end{align*}		
		These and the Cauchy-Schwarz inequality imply
		\begin{align}
		\begin{aligned} \label{C-1-9}
				&\Big| \frac{d}{dt}\left|\omega_j(t)-\omega_i(t)\right|^2 \Big| =\Big|(\dot{\omega}_j-\dot{\omega}_i) \cdot(\omega_j-\omega_i)\Big|\\
				&\hspace{1cm} \leq \frac{(\kappa_0+\kappa_1)}{N}\sum_{k=1}^N \Big(|\omega_k-\omega_j| +|\omega_k-\omega_i| \Big) \cdot |\omega_j-\omega_i|    \\
				&\hspace{1.6cm}+\frac{\kappa_2|\omega_j-\omega_i|}{N}\left(\sum_{k=1}^N\Big||\theta_k-\theta_j|-\theta_{jk}^\infty\Big|+\sum_{k=1}^N\Big||\theta_k-\theta_i|-\theta_{ik}^\infty\Big|\right).
			\end{aligned}
		\end{align}
		Due to Lemma \ref{L3.1}, we have
		\[ \sup_{0 \leq t < \infty} |\theta_i(t) - \theta_j(t)| \leq {\mathcal U} < \pi. \]
		This yields
		\[ \kappa_0 \cos(\theta_j-\theta_i) + \kappa_1 \geq \kappa_0 \cos {\mathcal U} + \kappa_1 > 0. \]
		Since the energy production rate $\mathcal{P}$ is nonnegative, Proposition \ref{KEE} leads to
		\[\mathcal{E}(t)=\mathcal{E}_k(t)+\mathcal{E}_p(t)\le\mathcal{E}(0),\quad\forall ~t\ge0.\]
		With the nonnegativity of  $\mathcal{E}_k$ and $\mathcal{E}_p$, the uniform boundedness of them can be obtained:
		\begin{equation} \label{C-1-10} 
		\sup_{0\le t\le\infty}\frac{1}{2}\sum_{i=1}^N|w_i(t)|^2<\infty\quad\mbox{and}\quad\sup_{0\le t\le\infty}\frac{\kappa_2}{4N}\sum_{i,j=1}^N\Big(|\theta_j(t)-\theta_i(t)|-\theta_{ij}^\infty\Big)^2<\infty.
		\end{equation}
		Finally, we combine \eqref{C-1-9} and \eqref{C-1-10} to get the desired estimate.
		 \end{proof}
	Now, we present the proof of complete synchronization of \eqref{Ku-SB}.
	\begin{theorem}\label{T3.2}~\emph{(Complete synchronization)}
		Suppose that initial data and coupling strengths satisfy \eqref{C-1-3} together with, and let $\{ \theta_i \}$ be a global smooth solution to \eqref{Ku-SB}. Then, the following assertions hold.
		\begin{enumerate}
		\item
			Complete synchronization emerges asymptotically:
			\[
			\lim_{t\to\infty} \max_{i,j} |{\dot \theta}_j(t)- {\dot \theta}_i(t)|= 0.
			\]
			\item	
			If $\displaystyle \sum_{i=1}^{N} \omega_i^0 = 0$, then we have
			 \[ \lim_{t \to \infty} \max_{1 \leq i \leq N} |\omega_i(t)| = 0. \]
	\end{enumerate}
	\end{theorem}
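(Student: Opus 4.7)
The plan is to combine the dissipation identity of Proposition \ref{KEE} with Barbalat's lemma (Lemma \ref{Barbalat}) to extract the zero-limit of pairwise frequency differences. The crucial input is the uniform positive lower bound $\kappa_0\cos\mathcal{U}+\kappa_1>0$ assumed in \eqref{C-1-3}, together with the fact that the phase spread stays below $\mathcal{U}<\pi$ for all time, as guaranteed by the positive invariance of $\mathcal{S}$ proved in Lemma \ref{L3.1}.

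First, I would integrate the dissipation identity $\mathcal{E}(t)+\int_0^t\mathcal{P}(s)\,ds=\mathcal{E}(0)$ up to $t=\infty$ to obtain $\int_0^\infty\mathcal{P}(s)\,ds\le\mathcal{E}(0)<\infty$. By Lemma \ref{L3.1}, the weight $\kappa_0\cos(\theta_j-\theta_i)+\kappa_1$ appearing in $\mathcal{P}$ is bounded below by the strictly positive constant $\alpha:=\kappa_0\cos\mathcal{U}+\kappa_1$ along the trajectory, so
\[
\int_0^\infty\sum_{i,j=1}^N|\omega_j(t)-\omega_i(t)|^2\,dt \le \frac{2N\mathcal{E}(0)}{\alpha}<\infty.
\]
The map $t\mapsto\sum_{i,j}|\omega_j(t)-\omega_i(t)|^2$ is uniformly continuous by Lemma \ref{L3.3}, so Barbalat's lemma yields $\sum_{i,j}|\omega_j(t)-\omega_i(t)|^2\to 0$, which is equivalent to assertion (1). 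For (2), I would combine (1) with the conservation law $\sum_i\omega_i(t)\equiv\sum_i\omega_i^0=0$ from Lemma \ref{L2.1}(ii) to write $N\omega_i(t)=\sum_j(\omega_i(t)-\omega_j(t))$, so that $\max_i|\omega_i(t)|\le\max_{j,k}|\omega_j(t)-\omega_k(t)|\to 0$.

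There is no serious obstacle: Proposition \ref{KEE}, Lemma \ref{L3.1}, and Lemma \ref{L3.3} furnish all the ingredients. The only point requiring care is to confirm that hypothesis \eqref{C-1-3} delivers the strict positivity $\alpha>0$ uniformly in time so that integrability of $\mathcal{P}$ transfers to integrability of $\sum_{i,j}|\omega_j-\omega_i|^2$, and this is precisely what the invariance of $\mathcal{S}$ supplies.
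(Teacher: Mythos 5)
Your proposal is correct and follows essentially the same route as the paper: Proposition \ref{KEE} plus the uniform lower bound $\kappa_0\cos\mathcal{U}+\kappa_1>0$ from Lemma \ref{L3.1} gives the $L^2$-in-time integrability of $\sum_{i,j}|\omega_j-\omega_i|^2$, Lemma \ref{L3.3} gives uniform continuity, and Barbalat's lemma concludes part (1). For part (2) the paper uses the identity $\sum_{i,j}|\omega_i-\omega_j|^2=2N\sum_i|\omega_i|^2$ under zero mean rather than your pointwise bound $|\omega_i|\le\max_{j,k}|\omega_j-\omega_k|$, but both are immediate consequences of the conservation law in Lemma \ref{L2.1} and the difference is cosmetic.
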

	\begin{proof}
		\noindent (i)~It follows from Proposition \ref{KEE} that 
		\[
		\int_{0}^{\infty} \sum_{i,j =1 }^{N}  |\omega_j(t) -\omega_i(t)|^2 dt \leq \frac{2N \mathcal{E}(0)}{\kappa_0 \cos {\mathcal U} + \kappa_1} < \infty,
		\]
		for $i, j \in [N]$.
		We apply Lemma \ref{L3.3} to show that the map $t \mapsto  \sum_{i,j} |\omega_j(t) - \omega_i(t)|^2$ is uniformly continuous. Thus, we can employ Lemma \ref{Barbalat} to prove the desired estimate:
		\begin{equation} \label{C-1-7}
			\lim_{t \to \infty} \sum_{i,j=1}^{N}  |\omega_j(t) - \omega_i(t)|^2 = 0.
		\end{equation}
		\noindent (ii)~We use Lemma \ref{L2.1} to get 
		\[  \sum_{i=1}^{N} \omega_i(t) =  \sum_{i=1}^{N} \omega_i^0=0, \quad t \geq 0. \]
		This implies 
		\begin{equation} \label{C-1-8}
			\sum_{i,j=1}^N |\omega_i - \omega_j|^2 =  2 N \sum_{i=1}^{N} |\omega_i|^2 - 2\Big| \sum_{i=1}^{N} \omega_i \Big|^2 =  2 N \sum_{i=1}^{N} |\omega_i|^2.
		\end{equation}
		Consequently, we combine \eqref{C-1-7} and \eqref{C-1-8} to obtain the desired estimate. 
		 \end{proof}
		 As a directly corollary of Theorem \ref{T3.2}, we can infer asymptotic behaviors of kinetic, potential and total energies. 
		 \begin{corollary} \label{C3.2}
		Suppose that initial data and coupling strengths satisfy \eqref{C-1-3} and $\displaystyle \sum_{i=1}^{N} \omega_i^0 = 0$, and  let $\{ (\theta_i, \omega_i) \}$ be a global smooth solution to \eqref{Ku-SB}. Then, there exists an nonnegative constant ${\mathcal E}^{\infty}$ such that 
		\[ \lim_{t \to \infty} {\mathcal E}_k(t) = 0, \quad  \lim_{t \to \infty} {\mathcal E}_p(t) = {\mathcal E}^{\infty}, \quad  \lim_{t \to \infty} {\mathcal E}(t) = {\mathcal E}^{\infty}.
		\]
		  \end{corollary}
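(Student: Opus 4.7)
The plan is to reduce the corollary to two already-available ingredients: (a) the pointwise decay of the frequency differences from Theorem \ref{T3.2}(ii), and (b) the monotone decay of the total energy from Proposition \ref{KEE}. These two facts, together with the nonnegative splitting $\mathcal{E}=\mathcal{E}_k+\mathcal{E}_p$, will give all three limit assertions almost automatically.

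First, I would handle the kinetic energy. By Theorem \ref{T3.2}(ii), the hypothesis $\sum_i \omega_i^0 = 0$ gives $\lim_{t\to\infty}\max_{1\le i\le N}|\omega_i(t)|=0$. Since $\mathcal{E}_k(t)=\tfrac{1}{2}\sum_{i=1}^N|\omega_i(t)|^2$, a direct application of the bound $\mathcal{E}_k(t) \le \tfrac{N}{2}\max_i |\omega_i(t)|^2$ yields $\lim_{t\to\infty}\mathcal{E}_k(t)=0$.

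Next, I would establish convergence of the total energy. Under the framework \eqref{C-1-3}, Lemma \ref{L3.1} guarantees $\kappa_0\cos(\theta_j-\theta_i)+\kappa_1\ge \kappa_0\cos\mathcal{U}+\kappa_1>0$ for all $t\ge 0$, so the production rate $\mathcal{P}(t)$ in Proposition \ref{KEE} is nonnegative. Therefore the map $t\mapsto \mathcal{E}(t)$ is monotonically nonincreasing on $[0,\infty)$. Since $\mathcal{E}_k,\mathcal{E}_p\ge 0$ implies $\mathcal{E}(t)\ge 0$, the monotone convergence theorem for real sequences yields the existence of a limit $\mathcal{E}^\infty := \lim_{t\to\infty}\mathcal{E}(t)\ge 0$.

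Finally, for the potential energy, I would simply subtract: from $\mathcal{E}_p(t)=\mathcal{E}(t)-\mathcal{E}_k(t)$ together with the two limits above,
\[
\lim_{t\to\infty}\mathcal{E}_p(t)=\lim_{t\to\infty}\mathcal{E}(t)-\lim_{t\to\infty}\mathcal{E}_k(t)=\mathcal{E}^\infty-0=\mathcal{E}^\infty.
\]
There is essentially no obstacle here: every step reuses a result already proved. The only point requiring minor care is verifying that the production rate is genuinely nonnegative along the solution, which is exactly where Lemma \ref{L3.1} (positive invariance of $\mathcal{S}$) is used to keep $\cos(\theta_j-\theta_i)$ bounded away from $-\kappa_1/\kappa_0$; this has already been done in the proof of Theorem \ref{T3.1}.
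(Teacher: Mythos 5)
Your proposal is correct and follows essentially the same route as the paper's own proof: kinetic energy decay from Theorem \ref{T3.2}(ii), monotone convergence of the nonnegative, nonincreasing total energy via Proposition \ref{KEE}, and subtraction to obtain the limit of $\mathcal{E}_p$. Your extra remark on checking nonnegativity of the production rate through Lemma \ref{L3.1} is a sensible, if implicit in the paper, justification of the monotonicity step.
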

		 \begin{proof} (i) It follows from the second assertion in Theorem \ref{T3.2} that 
		   \[ \lim_{t \to \infty} |\omega_i(t)| = 0, \quad \mbox{for all $i \in [N]$}. \]
		Thus, the total kinetic energy tends to zero asymptotically:
		\begin{equation} \label{New-Eq-1}
		 \lim_{t \to \infty} {\mathcal E}_k(t) = \frac{1}{2} \sum_{i=1}^{N}  \lim_{t \to \infty} |\omega_i(t)|^2 = 0.
		 \end{equation}
		(ii) It follows from \eqref{C-1-3} and Proposition \ref{KEE} that a nonnegative energy ${\mathcal E}$ is nonincreasing in time $t$. Thus, there exists ${\mathcal E}^{\infty} \in [0, {\mathcal E}(0))$ such that 
		\begin{equation} \label{New-Eq-2}
		 \lim_{t \to \infty} {\mathcal E}(t) = {\mathcal E}^{\infty}.
		 \end{equation}
		Finally, we combine \eqref{New-Eq-1} and \eqref{New-Eq-2}, and use ${\mathcal E} = {\mathcal E}_k + {\mathcal E}_p$ to see
		\[  \lim_{t \to \infty} {\mathcal E}_p(t) = {\mathcal E}^{\infty}. \]
		 
		 \end{proof}
%
%
        \section{The Cucker-Smale model with a bonding force} \label{sec:4}
	\setcounter{equation}{0}
	 In this section, we study the nonexistence of finite-time collisions and emergent dynamics of the Cucker-Smale model with a bonding control. Thanks to Lemma \ref{L2.2}, without loss of generality, we may assume
	\[ \sum_{i=1}^N \bv_i(t) = 0,  \quad  t \geq 0. \]
	\subsection{Nonexistence of finite-time collisions} Recall that 
	\[ \br_{ij} := \bx_i-\bx_j, \quad  r_{ij} :=\| \br_{ij}\|, \quad i, j \in [N], \]
	and consider velocity dynamics in \eqref{CSB}:
	\begin{align} \label{D-0}
		\begin{aligned}
			\displaystyle {\dot{\bv}_i} =&\frac{\kappa_0}{N}\sum_{j=1}^{N}\psi(r_{ij})\left(\bv_j- \bv_i\right) + \frac{\kappa_1}{N}\sum_{j=1}^{N}\frac{\langle \bv_j-\bv_i,\br_{ji}\rangle}{r^2_{ij}}\br_{ji} +\frac{\kappa_2}{N}\sum_{j=1}^{N}\frac{r_{ij}-d^{\infty}_{ij}}{r_{ij}}\br_{ji}.
		\end{aligned}
	\end{align}
	Note that $r_{ij}$ appears in the denominator in the R.H.S. of \eqref{D-0} and the continuity of $\frac{\br_{ji}}{r_{ji}}$ breaks down when $r_{ji}=0$, i.e. at the instant when two particles collide. Therefore, we have to make sure that $r_{ij} > 0$ in any finite-time interval to get the global existence of classical solutions in the framework of the Cauchy-Lipschitz theory. Similar to \eqref{C-1-1-1}, we set 
	\begin{align}
		\begin{aligned} \label{D-0-0}
			U &:= \max_{i \neq j}d^{\infty}_{ij}+\sqrt{\frac{2N E(0)}{\kappa_2}}, \quad L :=\min_{i \neq j} d^{\infty}_{ij}-\sqrt{\frac{2N E(0)}{\kappa_2}}, \\
			S &:= \Big \{ (X, V)  \in \bbr^{2dN}:~\max_{i \neq j} \| \bx_i - \bx_j \| \leq U \Big \}.  
		\end{aligned}
	\end{align}
	Note that $L$ and $U$ are slightly different from ${\mathcal L}$ and ${\mathcal U}$ in \eqref{C-1-1-1}. 
	\begin{theorem}\label{T4.1}
		\emph{(Nonexistence of finite-time collisions)}
		Suppose initial data and system parameters satisfy
		\begin{align} \label{D-3-1}
			\begin{aligned}
				\min_{i \neq j} \|\bx_i^0  - \bx_j^0 \| > 0, \quad  \min_{i \neq j} d^{\infty}_{ij} > \sqrt{\frac{2N E(0)}{\kappa_2}}, \\
				(X^0, V^0) \in S,\quad\kappa_0 > 0, \quad  \kappa_1 > 0, \quad \kappa_2 > 0,
			\end{aligned}
		\end{align}
		and for $\tau\in(0,\infty]$, let $\{(\bx_i,\bv_i)\}_{i=1}^N$ be a smooth solution to \eqref{CSB} -\eqref{A-2} in the time-interval $[0,\tau)$. Then we have
		\begin{equation}\label{D-4}
			0 < L \leq \| \bx_i(t) - \bx_j(t) \| 
			\leq  U, \quad \forall~ t \in [0, \tau), 
		\end{equation}
		for $i,j\in[N]$. In particular, this implies the nonexistence of finite-time collisions. 
	\end{theorem}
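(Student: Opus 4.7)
The plan is to mimic the strategy used for Theorem \ref{T3.1} in the Kuramoto setting, but the argument is cleaner here because the production rate $P$ in Proposition \ref{CSEE} is manifestly nonnegative: the first summand carries the nonnegative weight $\psi \geq 0$ and is a sum of squares $\|\bv_j - \bv_i\|^2$, while the second summand is a sum of squared inner products. Hence no preliminary invariance lemma of the type \ref{L3.1} (which was needed in the Kuramoto case to guarantee $\kappa_0\cos(\theta_j - \theta_i) + \kappa_1 \geq 0$) is required.

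Concretely, I would first invoke Proposition \ref{CSEE} on the existence interval $[0,\tau)$, where a smooth solution is assumed, to obtain the monotonicity
\[
E(t) = E(0) - \int_0^t P(s)\, ds \leq E(0), \qquad t \in [0,\tau).
\]
Since $E_k \geq 0$, this gives $E_p(t) \leq E(0)$. Because every term in the definition \eqref{D-1} of $E_p$ is nonnegative, for any fixed pair $i \neq j$ one can isolate a single summand,
\[
\frac{\kappa_2}{2N}\bigl(\|\bx_j(t) - \bx_i(t)\| - d^{\infty}_{ij}\bigr)^2 \leq E_p(t) \leq E(0),
\]
so that
\[
\bigl|\|\bx_j(t) - \bx_i(t)\| - d^{\infty}_{ij}\bigr| \leq \sqrt{\frac{2NE(0)}{\kappa_2}}.
\]

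Taking minima and maxima in $i \neq j$ and using the definitions of $L$ and $U$ in \eqref{D-0-0} yields the two-sided bound \eqref{D-4}. The hypothesis $\min_{i \neq j} d^{\infty}_{ij} > \sqrt{2NE(0)/\kappa_2}$ in \eqref{D-3-1} is precisely what guarantees $L > 0$, hence a uniform positive lower bound on inter-particle distances on $[0,\tau)$. A finite-time collision would force some $\|\bx_i(t) - \bx_j(t)\| \to 0$ as $t$ approaches the collision time, contradicting this lower bound; this rules out any finite-time singularity and, combined with the Cauchy–Lipschitz theory applied to the smooth (and now nonsingular) right-hand side of \eqref{D-0}, extends the solution globally in time.

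The main potential pitfall is subtle rather than computational: one must ensure that the energy identity of Proposition \ref{CSEE} is actually available throughout $[0,\tau)$ before using it to exclude collisions, which looks circular. This is resolved by the hypothesis that a smooth solution exists on $[0,\tau)$, since smoothness forces $r_{ij}(t) > 0$ throughout and thereby legitimizes the computation leading to the energy identity. The only minor care needed is to note that the bound \eqref{D-4} holds for all $i \neq j$ (for $i = j$ the claim is vacuous), and then to run the standard continuation/bootstrap argument to promote the a priori estimate on $[0,\tau)$ to a global-in-time estimate, giving the uniform upper bound by $U$ and the strict positivity of $L$ for all $t \geq 0$.
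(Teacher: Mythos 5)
Your proposal is correct and follows essentially the same route as the paper: apply the energy dissipation identity of Proposition \ref{CSEE} to get $E_p(t)\leq E(0)$, isolate a single summand of $E_p$ to bound $\bigl|\,\|\bx_i(t)-\bx_j(t)\|-d^{\infty}_{ij}\bigr|$ by $\sqrt{2NE(0)/\kappa_2}$, and conclude $0<L\leq\|\bx_i(t)-\bx_j(t)\|\leq U$ from hypothesis $\eqref{D-3-1}_2$. Your added remarks on why no invariance lemma analogous to Lemma \ref{L3.1} is needed and on the non-circularity of using the energy identity on the assumed existence interval are accurate but not part of the paper's written argument.
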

	\begin{proof}
		We use Proposition \ref{CSEE} to bound the potential energy by
		\begin{align} \label{D-5}
			\begin{aligned}
				E_p(t)&\leq E_k(t)+\frac{\kappa_2}{2N}\sum_{1 \leq i < j \leq N}( \| \bx_i(t) - \bx_j(t) \| -d^{\infty}_{ij})^2\leq E(0).
			\end{aligned}
		\end{align}
		For $i, j \in [N]$, we use \eqref{D-5} to find 
		\begin{align*}\label{L4.2.3}
			\frac{\kappa_2}{2N}(  &\| \bx_i(t) - \bx_j(t) \|  -d^{\infty}_{ij})^2\leq \frac{\kappa_2}{2N}\sum_{1 \leq i < j \leq N} \Big ( \| \bx_i(t) - \bx_j(t) \|  -d^{\infty}_{ij} \Big )^2 \leq E(0).
		\end{align*}
		This results in
		\[  \Big|  \| \bx_i(t) - \bx_j(t) \|  -d^{\infty}_{ij} \Big|  \leq  \sqrt{\frac{2N E(0)}{\kappa_2}}, \]
		or equivalently, 
		\begin{align*}
		d^{\infty}_{ij}-\sqrt{\frac{2N E(0)}{\kappa_2}}\leq \|x_i(t)-x_j(t)\|\leq d^{\infty}_{ij}+\sqrt{\frac{2N E(0)}{\kappa_2}}, \quad i, j \in [N]. 
		\end{align*}
		This leads to
		\begin{equation} \label{D-5-1}
			L \leq  \| \bx_i(t) - \bx_j(t) \|  \leq U, ~~ t \in [0, \tau), ~~ i, j \in [N].
		\end{equation}
		On the other hand, the condition $\eqref{D-3-1}_2$ implies
		\begin{equation} \label{D-5-2}
			L > 0.
		\end{equation}
		Finally, we combine \eqref{D-5-1} and \eqref{D-5-2} to derive \eqref{D-4}. 
		 \end{proof}
	\begin{remark}\label{R4.1}
		Although the overall approach in the current subsection is similar to that of \cite{P-K-H}, it turns out that the control of individual $d^{\infty}_{ij}$ leads to the stability of the system, represented by Theorem \ref{T4.1}. We recall that energy is minimized if $\|\bx_i-\bx_j\| \approx d^{\infty}_{ij}$ for each $i,j$, but this is fundamentally impossible under the constraint of identical parameter $d^{\infty}_{ij} \equiv 2R~(i \neq j)$ unless the number of particle is small. This is because there is no feasible configuration to place $N$ particles in a way that every $N(N-1)/2$ distances are identical, in physical space of small dimension. Hence, if $d^{\infty}_{ij} \equiv 2R$ as in \cite{P-K-H}, then although the system will lead to state corresponding to the local minimizer of the energy functional, the potential energy $\mathcal{E}_p$ is forced to have certain lower bound, reflecting the infeasibility of the configuration satisfying $d^{\infty}_{ij} \equiv 2R$. To summarize, the stability and the well-posedness of \eqref{CSB} is influenced by the feasibility of the configuration parameter $\{ d^{\infty}_{ij}\}$, which is realized by Theorem \ref{T4.1}.
	\end{remark}
	
	\subsection{Asymptotic flocking} \label{sec:4.2}
	In this subsection, we present the asymptotic flocking estimate of \eqref{CSB}. For this, we first recall the concept of asymptotic flocking as follows.
	\begin{definition}\label{D4.1}
		Let $Z=\{(\bx_i,\bv_i)\}$ be a global smooth solution to \eqref{CSB}-\eqref{A-2}. The configuration $Z$ exhibits (asymptotic) flocking if the following two conditions hold.
		\begin{enumerate}
			\item
			(Spatial cohesion): the relative distances are uniformly bounded in time: 
			\[ \sup_{0 \leq t < \infty} \max_{i,j}  \|\bx_i(t) - \bx_j(t) \| <\infty. \]
			\item
			(Asymptotic velocity alignment): the relative velocities tend to zero asymptotically: 
			\[
			\lim_{t \to \infty} \max_{i,j} \|\bv_j(t)-\bv_i(t)\|=0.
			\]	
		\end{enumerate}
	\end{definition}
%
%
	First, we begin with the following elementary lemma.
	\begin{lemma} \label{L4.1}
		Let $\{(\bx_i,\bv_i)\}$ be a global smooth solution to \eqref{CSB} - \eqref{A-2}. Then, one has 		
		\[ \left \|\frac{d}{dt}\|\bv_i(t) -\bv_j(t) \|^2\right \|_{L^{\infty}(\bbr_+)} <\infty, \quad \mbox{for}~ i, j \in [N]. \] 
		In particular,  the map $t \mapsto \|\bv_i-\bv_j\|^2$ is uniformly continuous.
	\end{lemma}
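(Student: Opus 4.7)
The plan is to follow exactly the same strategy used in Lemma \ref{L3.3} for the Kuramoto setting, transplanted to the Cucker–Smale system \eqref{CSB}. I would begin with the identity
\[
\frac{d}{dt}\|\bv_i(t)-\bv_j(t)\|^2 = 2\bigl\langle \bv_i(t)-\bv_j(t),\ \dot{\bv}_i(t)-\dot{\bv}_j(t)\bigr\rangle,
\]
so by Cauchy–Schwarz it suffices to produce a uniform-in-$t$ bound on $\|\bv_i-\bv_j\|$ and on $\|\dot{\bv}_i-\dot{\bv}_j\|$.

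The uniform bound on the velocity differences is immediate from the energy estimate in Proposition \ref{CSEE}: since $P\ge 0$, the kinetic energy satisfies $E_k(t)\le E(t)\le E(0)$, hence $\|\bv_i(t)\|\le\sqrt{2E(0)}$ for every $i\in[N]$ and all $t\ge 0$, which yields $\|\bv_i-\bv_j\|\le 2\sqrt{2E(0)}$.

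The uniform bound on $\dot{\bv}_i$ will be extracted term-by-term from $\eqref{CSB}_2$. For the Cucker–Smale term I use the boundedness $\psi\le\psi_M$ from \eqref{A-2} together with the velocity bound above. For the $\kappa_1$-term, the key observation is that the vector $\br_{ji}/r_{ji}$ is a unit vector, so
\[
\left\|\frac{\langle \bv_j-\bv_i,\br_{ji}\rangle}{r_{ji}^2}\br_{ji}\right\|
=\frac{|\langle \bv_j-\bv_i,\br_{ji}\rangle|}{r_{ji}}\le\|\bv_j-\bv_i\|,
\]
so no lower bound on $r_{ji}$ is needed for this term. For the bonding $\kappa_2$-term, the norm of the summand is $|r_{ij}-d^{\infty}_{ij}|$, and Theorem \ref{T4.1} (invoked under \eqref{D-3-1}) gives $L\le r_{ij}(t)\le U$ uniformly, so $|r_{ij}-d^{\infty}_{ij}|$ is uniformly bounded. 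Summing over $j$ yields a finite constant $C=C(N,\kappa_0,\kappa_1,\kappa_2,\psi_M,E(0),L,U,\{d^{\infty}_{ij}\})$ with $\|\dot{\bv}_i(t)\|\le C$ for all $t$ and $i$, and hence $\|\dot{\bv}_i-\dot{\bv}_j\|\le 2C$.

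Combining these bounds via Cauchy–Schwarz gives $\bigl|\frac{d}{dt}\|\bv_i-\bv_j\|^2\bigr|\le 8C\sqrt{2E(0)}$, which is the desired $L^\infty(\bbr_+)$ estimate; uniform continuity of $t\mapsto\|\bv_i-\bv_j\|^2$ then follows from the mean value theorem applied to this Lipschitz bound. The only genuinely nontrivial input is Theorem \ref{T4.1}, which both prevents collisions (keeping the $\kappa_1$-denominators away from singular behavior once rewritten) and produces the upper bound $U$ ensuring $|r_{ij}-d^{\infty}_{ij}|$ is controlled; everything else is routine algebra, so I expect no real obstacle beyond carefully organizing the constants.
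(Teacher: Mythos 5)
Your proposal follows essentially the same route as the paper: the derivative identity, the bound $\|\bv_i\|\le\sqrt{2E(0)}$ from $E_k\le E(0)$, the observation that the $\kappa_1$-summand is controlled by $\|\bv_j-\bv_i\|$ because $\br_{ji}/r_{ji}$ is a unit vector, and term-by-term estimation of $\dot{\bv}_i$. The one place you diverge is the $\kappa_2$-term: you invoke Theorem \ref{T4.1} under the hypothesis \eqref{D-3-1} to get $L\le r_{ij}\le U$, but the lemma as stated assumes only that $\{(\bx_i,\bv_i)\}$ is a global smooth solution, not \eqref{D-3-1}. The paper avoids this by bounding $|r_{ij}-d^{\infty}_{ij}|$ directly from the potential energy: Proposition \ref{CSEE} gives $E_p(t)\le E(0)$, hence $\bigl(\|\bx_j-\bx_i\|-d^{\infty}_{ij}\bigr)^2\le 2NE(0)/\kappa_2$ uniformly in $t$, with no extra assumption on initial data. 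Your argument is correct in the setting where the lemma is actually applied (Theorem \ref{T4.2} assumes \eqref{D-3-1}), but as a proof of the lemma as stated it smuggles in a hypothesis that is not needed; replacing the appeal to Theorem \ref{T4.1} by the energy bound on $E_p$ removes the gap and recovers the paper's proof exactly. Your remark that Theorem \ref{T4.1} is needed to keep the $\kappa_1$-denominators nonsingular is also superfluous: as you yourself note, that term needs no lower bound on $r_{ji}$, and the existence of a global smooth solution is already part of the hypothesis.
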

	\begin{proof}
	We use the same arguments in Lemma \ref{L3.3}. Note that 
		\begin{align}\label{D-5-3}
		\begin{aligned}
			&\Big|(\dot{{\bv}}_j-\dot{{\bv}}_i)\cdot({\bv}_j-{\bv}_i)\Big| \\
			&\le\Bigg[\frac{\kappa_0\psi_M}{N}\sum_{k=1}^N\Big(\|{\bv}_k-{\bv}_j\|+\|{\bv}_k-{\bv}_i\|\Big)+\frac{\kappa_1}{N}\sum_{k=1}^N\Big(\|{\bv}_k-{\bv}_j\|+\|{\bv}_k-{\bv}_i\|\Big)\\
			&\hspace{5.9cm}+\frac{\kappa_2}{N}\sum_{k=1}^N\Big(\Big|r_{kj}-d_{kj}^\infty\Big|+\Big|r_{ki}-d_{ki}^\infty\Big|\Big)\Bigg]\|{\bv}_j-{\bv}_i\|\\
			& \le \Bigg[\frac{\kappa_0\psi_M}{N} \sum_{k=1}^N\Big(\|{\bv}_k-{\bv}_j\|+\|{\bv}_k-{\bv}_i\|\Big)+\frac{\kappa_1}{N}\sum_{k=1}^N\Big(\|{\bv}_k-{\bv}_j\|+\|{\bv}_k-{\bv}_i\|\Big)\Bigg]\|{\bv}_j-{\bv}_i\|\\
			&\hspace{0.5cm}+\frac{\kappa_2}{N}\left(\left(\sum_{k=1}^N\Big|r_{kj}-d_{kj}^\infty\Big|^2\right)^{1/2}+\left(\sum_{k=1}^N\Big|r_{ki}-d_{ki}^\infty\Big|^2\right)^{1/2}\right)\left(\sum_{k=1}^N\|{\bv}_j-{\bv}_i\|^2\right)^{1/2},
		\end{aligned}
		\end{align}
		where $\psi_M$ is a finite upper bound of $\psi$. Due to Proposition \ref{CSEE}, we can get uniform boundedness of $E_k$ and $E_p$:
		\begin{align} \label{D-5-4}
			\begin{aligned}
				&\sup_{0\le t<\infty}\sum_{k=1}^N\|{\bv}_k\|^2<\infty\quad\mbox{and}\quad\sup_{0\le t<\infty}\Big(\|x_j-x_i\|-d_{ij}^\infty\Big)^2<\infty.
			\end{aligned}
		\end{align}
		Consequently, we combine \eqref{D-5-3} with \eqref{D-5-4} to obtain the desired estimate. 
		 \end{proof}
	\hspace{.1cm} Now, we present our fourth main result on the emergence of asymptotic flocking. 
	\begin{theorem}\label{T4.2} 
		\emph{(Global flocking)}
		Suppose that initial data, system parameters and communication weight function satisfy \eqref{D-3-1}, and let $\{(\bx_i,\bv_i)\}$ be a global smooth solution to \eqref{CSB}. Then, the following assertions hold.
		\begin{enumerate}
			\item
			Asymptotic flocking occurs:
			\begin{align*}
				&\sup_{0 \leq t < \infty} \max_{i,j} \|\bx_i(t) - \bx_j(t) \| <\infty,\quad\lim_{t \to \infty} \max_{i,j} \|\bv_j(t)-\bv_i(t)\|=0. 
			\end{align*}
			\item
			If the initial total momentum is zero, that is, $\sum_{i=1}^N\bv_i^0=0$, then we have
			\[ \lim_{t\to\infty} \max_{1 \leq i \leq N} \|\bv_i(t) \|=0. \]
		\end{enumerate}
	\end{theorem}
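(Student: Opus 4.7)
The plan is to mirror the structure of Theorem \ref{T3.2} from the Kuramoto setting, exploiting the fact that the Cucker--Smale bonding model already has all the necessary ingredients: the energy estimate from Proposition \ref{CSEE}, the uniform spatial bounds from Theorem \ref{T4.1}, and the uniform continuity lemma (Lemma \ref{L4.1}) that sets up a Barbalat-type argument.

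First, for the spatial cohesion part of (1), I would simply invoke Theorem \ref{T4.1}: under \eqref{D-3-1} we already have $\|\bx_i(t)-\bx_j(t)\|\le U<\infty$ for all $t\ge0$ and all $i,j\in[N]$, which gives the uniform boundedness $\sup_{t\ge0}\max_{i,j}\|\bx_i(t)-\bx_j(t)\|\le U$. So the nontrivial content of (1) is the velocity alignment.

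For the velocity alignment, the idea is to integrate the energy identity in Proposition \ref{CSEE} and bound the production $P$ from below. By Theorem \ref{T4.1}, the pairwise distances $r_{ij}(t)$ lie in $[L,U]\subset[0,U]$ for all $t$, so by the assumption \eqref{A-2} on the communication weight, $\psi(r_{ij}(t))\ge\psi_m>0$. Therefore $P(t)\ge\frac{\kappa_0\psi_m}{2N}\sum_{i,j}\|\bv_j(t)-\bv_i(t)\|^2$, and integrating in $t$ and using $E(t)\ge0$ gives
\[
\int_0^\infty \sum_{i,j=1}^N \|\bv_j(t)-\bv_i(t)\|^2\, dt \le \frac{2NE(0)}{\kappa_0\psi_m}<\infty.
\]
By Lemma \ref{L4.1}, the map $t\mapsto\sum_{i,j}\|\bv_j(t)-\bv_i(t)\|^2$ is uniformly continuous, so Barbalat's lemma (Lemma \ref{Barbalat}) applies and yields $\lim_{t\to\infty}\sum_{i,j}\|\bv_j(t)-\bv_i(t)\|^2=0$, which is equivalent to $\lim_{t\to\infty}\max_{i,j}\|\bv_j(t)-\bv_i(t)\|=0$.

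For (2), when $\sum_i \bv_i^0=0$, Lemma \ref{L2.2} gives $\sum_i\bv_i(t)\equiv 0$, so the polarization identity
\[
\sum_{i,j=1}^N\|\bv_j-\bv_i\|^2 \;=\; 2N\sum_{i=1}^N\|\bv_i\|^2 - 2\Bigl\|\sum_{i=1}^N\bv_i\Bigr\|^2 \;=\; 2N\sum_{i=1}^N\|\bv_i\|^2
\]
reduces the claim to a direct consequence of (1). I do not anticipate a real obstacle here, since every nontrivial analytic step (collision avoidance, uniform distance control, uniform continuity of the velocity-squared differences, and the energy dissipation identity) has already been established in earlier statements; the main thing to be careful about is invoking Theorem \ref{T4.1} to legitimately replace $\psi(r_{ij})$ by a positive constant $\psi_m$ under the assumption \eqref{A-2}, which is exactly why the range $[0,U]$ was built into the definition of $\psi_m$.
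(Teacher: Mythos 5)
Your proposal is correct and follows essentially the same route as the paper: spatial cohesion from Theorem \ref{T4.1}, the lower bound $\psi(r_{ij})\ge\psi_m$ on $[0,U]$ to control the production term in Proposition \ref{CSEE}, integrability of $\sum_{i,j}\|\bv_j-\bv_i\|^2$, Barbalat's lemma via Lemma \ref{L4.1}, and the polarization identity with Lemma \ref{L2.2} for part (2). No gaps.
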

	\begin{proof}
		\noindent (i) Since initial data and system parameters satisfy the same conditions \eqref{D-3-1} as in Theorem \ref{T4.1}, the spatial cohesion can be followed from \eqref{D-4}:
		\begin{equation} \label{D-6}
			\sup_{0 \leq t < \infty} \max_{i,j} \| \bx_i(t) - \bx_j(t) \| \leq U.
		\end{equation}
		For the velocity alignment, we use Proposition \ref{CSEE} to obtain 
		\begin{align*}
		\frac{dE}{dt}&\leq -\frac{\kappa_0}{2N}\sum_{i,j =1}^{N}\psi(r_{ji})\| \bv_j- \bv_i\|^2\leq -\frac{\kappa_0 \psi_m}{2N}\sum_{i,j =1}^{N}\|\bv_j- \bv_i\|^2. 
		\end{align*}
		Then, one can show that
		\[
		\int_{0}^{\infty}\sum_{i=1}^{N}\|\bv_j(t) -\bv_i(t) \|^2dt \leq \frac{2N E(0)}{ \kappa_0 \psi_m}< \infty.
		\]
		Thanks to Lemma \ref{L4.1}, the mapping $t\mapsto\sum_{i=1}^{N}\|\bv_j(t) -\bv_i(t) \|^2$ is uniformly continuous. Thus, we use Barbalat's Lemma to get
		
		\begin{equation} \label{D-7}
			\lim_{t \to \infty} \sum_{i,j=1}^{N} \|\bv_i(t) -\bv_j(t) \|^2 = 0.
		\end{equation}
		This yields the desired velocity alignment. Accordingly, we combine \eqref{D-6} and \eqref{D-7} to get the desired flocking estimates. \newline
		
		\noindent (ii)~Suppose initial total momentum $\sum_{i=1}^{N} \bv_i^0$ is zero. Then, by Lemma \ref{L2.2}, one has 
		\[ \sum_{i=1}^{N} \bv_i(t) = 0, \quad \forall~t >0. \]
		This leads to
		\begin{align*}
			\sum_{i,j=1}^{N} \|\bv_i(t) -\bv_j(t) \|^2 &= 2N  \sum_{i=1}^{N} \| \bv_i(t) \|^2 - 2 \Big|  \sum_{i=1}^{N} \bv_i(t) \Big|^2=  2N  \sum_{i=1}^{N} \| \bv_i(t) \|^2.
		\end{align*}
		Letting $t \to \infty$, we use \eqref{D-7} and the above relation to find
		\[ \lim_{t \to \infty}  \sum_{i=1}^{N} \| \bv_i(t) \|^2 = 0.\]
		 \end{proof}
	\begin{remark} By the conservation of momentum and velocity alignment estimate, one can see that the velocities tend to the average initial momentum asymptotically:
	\[ \bv_i(t) \to \frac{1}{N} \sum_{i=1}^{N} \bv_i^0, \quad \mbox{as $t \to \infty$ for all $i \in [N]$}.	
	\]

	\end{remark}

	\subsection{Convergence property of two-particle system} \label{sec:4.3}
	In this subsection, we study the convergence of relative distances for the Cauchy problem to the two-particle system on the real line:
	 \begin{equation}
		\begin{cases} \label{New-D-1}
			\displaystyle \dot{x}_1 = v_1, \quad  \dot{x}_2 = v_2, \quad t>0,\\
			\displaystyle \dot{v}_1 =\frac{\kappa_0}{2} \psi(|x_2- x_1|)\left(v_2- v_1 \right) + \frac{\kappa_1}{2} 
			(v_2-v_1) \Big( \frac{(x_2-x_1)}{|x_2-x_1|} \Big)^2 \\
			\displaystyle \hspace{0.7cm} + \frac{\kappa_2}{2} (|x_2-x_1|- d^{\infty}_{12}) \frac{(x_2-x_1)}{|x_2-x_1|}, \\
			\displaystyle \dot{v}_2 =\frac{\kappa_0}{2} \psi(|x_1- x_2|)\left(v_1- v_2 \right) + \frac{\kappa_1}{2} 
			(v_1-v_2) \Big( \frac{(x_1-x_2)}{|x_1-x_2|} \Big)^2 \\
			\displaystyle \hspace{0.7cm} + \frac{\kappa_2}{2} (|x_1-x_2|- d^{\infty}_{12}) \frac{(x_1-x_2)}{|x_1-x_2|}, \\
			\displaystyle (x_1, v_1)(0)= (x_1^0, v_1^0), \quad (x_2, v_2)(0)= (x_2^0, v_2^0).
	\end{cases}
	\end{equation}
	So far, we focused on the collision avoidance so that we can use the Cauchy-Lipchitz theory to guarantee well-posedness under some sufficient framework in terms of system parameters and initial data. However, collisions between particles can occur some situations. In this situation, the R.H.S. of \eqref{New-D-1} becomes discontinuous at the position $x_1 = x_2$. Thus, we reinterpret \eqref{New-D-1} as the system of \emph{differential inclusion} and using the Filippov theory, we can still construct absolutely continuous solution to \eqref{New-D-1}. In what follows, we will show that this Filippov solution exhibits the convergence property of relative distance $|x_2 - x_1|$ as $t \to \infty$.  First, we set
	\[
		\psi \equiv 1, \quad x(t):=x_1(t)-x_2(t), \quad v(t):=v_1(t)-v_2(t), \quad  d^{\infty} := d^{\infty}_{12}.
	\]
	Then, it follows from \eqref{New-D-1} that $(x, v)$ satisfies 
	\begin{equation}
		\begin{cases} \label{CSB1}
			\displaystyle \dot{x}  =v,\quad t>0,\\
			\displaystyle \dot{v} = -{\kappa_0}\left(\frac{x}{|x|}\right)^2v
			-\kappa_1 \left(\frac{x}{|x|}\right)^2 v
			-\kappa_2 (|x| - d^{\infty}) \frac{x}{|x|}, \\
			\displaystyle (x, v)(0) = (x^0, v^0).
		\end{cases}
	\end{equation}
	Equivalently, \eqref{CSB1} can be rewritten as 
	\begin{equation} \label{CSB2}
	\ddot{x}
			= \begin{cases}
			-(\kappa_0 + \kappa_1) \dot{x} -\kappa_2 x + \kappa_2 d^{\infty}  \quad  & \text{if} ~ x > 0,\\
			-(\kappa_0 + \kappa_1) \dot{x} - \kappa_2 x -\kappa_2 d^{\infty} \quad  & \text{if} ~ x < 0.
			\end{cases}
	\end{equation}
	Once we have $(x, v)$, we use the balance laws:
	\[ v_1(t) + v_2(t) = v_1^0 + v_2^0, \quad x_1(t) + x_2(t) = x_1^0 + x_2^0 + (v_1^0 + v_2^0) t, \quad t \geq 0, \]
	then we can determine $(x_1, v_1)$ and $(x_2, v_2)$.  Since the set $\{ (x,v) \mid x =0 \}$ is of measure zero in $(x,v)$-space, Definition \ref{Fdef} allow us to construct an absolutely continuous solutoin to \eqref{CSB1}. 
	
\subsubsection{A short summary of Filippov's theory} \label{sec:4.3.1} In this part, we present a brief summary of Filippov's generalization solutions to a differential equation on Euclidean space $\bbr^n$ with discontinuous right-hand side. First, we begin with several basic definitions. 
\begin{definition}[Filippov solution \cite{Fi-0}]\label{Fdef}
Consider a system of differential equations for $z = z(t) \in \bbr^{n}$:
\begin{equation} \label{New-D-2}
\dot{z}(t) = Z(z(t)),  \quad t >0,
\end{equation}
where a vector field $Z:\bbr^n \to \bbr^n$ is possibly \emph{discontinuous}. \newline
\begin{enumerate}
		\item Let $\mathcal{P}(\bbr^n)$ be the power set of $\bbr^n$. Then the \emph{Filippov set-valued map} $F[Z]:\bbr^n \to \mathcal{P}(\bbr^n)$ is defined by
		\[
			F[Z](z) := \bigcap_{\delta>0}\bigcap_{|S|=0} \overline{\mathrm{co}}\{ Z(B_\delta(z) - S) \},
		\]
		where $\overline{\mathrm{co}}$ is a closed convex hull and $| \, \cdot \, |$ is the Lebesgue measure on $\mathbb{R}^n$. \newline
		\item $z = z(t)$ is a \emph{Filippov solution} to the Cauchy problem \eqref{New-D-2} on $[0,T] \in \bbr$, if $z$ is absolutely continuous and satisfies the following {\it differential inclusion} for almost every $t \in [0,T]$:
		\[
			\dot{z}(t) \in F[Z](z(t)).
		\]
	\end{enumerate}
\end{definition}
	Roughly speaking, since $x$ itself can be excised while constructing a convex hull, the Filippov solution can be understood as a generalized solution that refers to the vector field at neighborhood only. Therefore, we may expect that behavior at a discontinuous point can be treated from its neighborhood as far as nearby behavior is consistent, which we clarify in the following proposition. 
	\begin{definition}
	A vector field $Z:\bbr^n \to \bbr^n$ is \emph{piecewise continuous}, if there exists a finite collection of disjoint, open, and connected sets $\mathcal{D}_1,\cdots,\mathcal{D}_m \subset \bbr^n$ such that
		\begin{enumerate}
			\item $\displaystyle \bigcup_{k=1}^m\overline{\mathcal{D}}_k=\bbr^n$, and $Z$ is continuous on each $\mathcal{D}_k$.
			\item restriction of $Z$ to each $\mathcal{D}_k$ admits a continuous extension to the closure $\overline{\mathcal{D}}_k$, which is denoted by $Z \Big|_{\overline{\mathcal{D}}_k}$.
		\end{enumerate}
	\end{definition}
	\begin{proposition}\label{P4.1}\cite{Co,Fi-0}
		Let $Z:\bbr^n \to \bbr^n$ be a piecewise continuous vector field covered by $\mathcal{D}_1$ and $\mathcal{D}_2$. Let $S_Z := \partial\mathcal{D}_1 = \partial\mathcal{D}_2$ be the set of points at which $Z$ is discontinuous. Suppose that
		\begin{enumerate}
			\item $S_Z$ is a ${\mathcal C}^2$-manifold.
			\item For $i=1,2$, $Z \Big|_{\overline{\mathcal{D}}_i}$ is continuously differentiable on $\mathcal{D}_i$ and $Z \Big|_{\overline{\mathcal{D}}_1}- Z \Big|_{\overline{\mathcal{D}}_2}$ is continuously differentiable on $S_Z$.
			\item For each $z \in S_Z$, either $Z \Big|_{\overline{\mathcal{D}}_1}$ points into $\mathcal{D}_2$ or $Z \Big|_{\overline{\mathcal{D}}_2}$ points into $\mathcal{D}_1$.
		\end{enumerate}
	Then, $\dot{z}(t)=Z(z(t))$ has a unique Filippov solution starting from each initial data.
	\end{proposition}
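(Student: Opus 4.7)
The plan is to construct the Filippov solution by piecing together classical solutions on each open region, handle the crossings of $S_Z$ via the transversality hypothesis (3), and derive uniqueness from the same condition. Off $S_Z$, the Filippov set-valued map is single-valued, $F[Z](z) = \{Z(z)\}$, and $Z|_{\overline{\mathcal{D}}_i}$ is $C^1$ hence locally Lipschitz on $\mathcal{D}_i$; the classical Cauchy--Lipschitz theorem thus yields, for any initial datum inside $\mathcal{D}_i$, a unique classical solution which remains in $\mathcal{D}_i$ up to the first hitting time $t_1$ of $S_Z$, and such a solution automatically satisfies the differential inclusion $\dot z \in F[Z](z)$.

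Next I would analyze the dynamics at $S_Z$. Working locally, choose a defining $C^2$ function $\phi$ with $\mathcal{D}_1 = \{\phi>0\}$, $\mathcal{D}_2 = \{\phi<0\}$, and set $n = \nabla\phi$. Write $Z_i := Z|_{\overline{\mathcal{D}}_i}$. Piecewise continuity of $Z$ and the definition of $F[Z]$ give
\[
F[Z](z) = [Z_1(z),\, Z_2(z)] \qquad \text{for every } z \in S_Z,
\]
i.e.\ the closed segment joining the two limiting values. Hypothesis (3) translates into: at every $z \in S_Z$, either $\langle Z_1(z), n(z)\rangle < 0$ or $\langle Z_2(z), n(z)\rangle > 0$. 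This excludes exactly the branching configuration $\langle Z_1, n\rangle > 0$ and $\langle Z_2, n\rangle < 0$ (both vectors pointing away from $S_Z$), in which two distinct solutions could be continued from a single point of $S_Z$ by independently following $Z_1$ into $\mathcal{D}_1$ or $Z_2$ into $\mathcal{D}_2$. In every remaining case, one of two scenarios occurs: (a) \emph{transverse crossing}, where $\langle Z_1, n\rangle$ and $\langle Z_2, n\rangle$ share the same sign, so every element of the convex hull has nonzero $n$-component of that sign and the trajectory leaves $S_Z$ immediately into the corresponding region; or (b) \emph{sliding}, where $\langle Z_1, n\rangle<0$ and $\langle Z_2, n\rangle>0$, in which case the segment $[Z_1, Z_2]$ contains a unique tangential element $\lambda Z_1 + (1-\lambda)Z_2$, and hypothesis (2) makes both $\lambda$ and this element $C^1$ on $S_Z$, so the sliding motion is governed by a locally Lipschitz ODE on the $C^2$ manifold $S_Z$ and again admits a unique classical solution.

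The global Filippov solution is then assembled by concatenation: after each first hitting of $S_Z$, restart Cauchy--Lipschitz in the region indicated by the analysis above, or on $S_Z$ itself in the sliding case, and iterate. Uniqueness off $S_Z$ is standard Cauchy--Lipschitz, uniqueness at crossings follows from hypothesis (3), and uniqueness of the sliding motion follows from the Lipschitz reduction on $S_Z$. The $C^2$ regularity of $S_Z$ together with the $C^1$ extensions of $Z_1$ and $Z_2$ ensures that the crossing/sliding classification is locally constant along any trajectory, ruling out Zeno-type accumulation of switches. The chief technical obstacle is the sliding step: verifying that the Filippov inclusion on $S_Z$ reduces to a well-posed Lipschitz ODE on $S_Z$ and that transitions between crossing and sliding regimes behave well, both of which rest on hypothesis (2) providing precisely the $C^1$ regularity of $Z_1-Z_2$ on $S_Z$ needed to compute $\lambda(z)$ and to linearize the transversality condition.
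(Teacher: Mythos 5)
The paper does not actually prove Proposition \ref{P4.1}: it is quoted from the references \cite{Co,Fi-0} (Cortes's tutorial and Filippov's monograph) and used as a black box in Section \ref{sec:4.3}. So there is no in-paper argument to compare against; what you have written is essentially the standard proof from those sources, and your overall architecture --- classical Cauchy--Lipschitz flow off $S_Z$, identification of $F[Z](z)$ with the segment $[Z_1(z),Z_2(z)]$ on $S_Z$, the trichotomy of transversal crossing / attractive sliding / excluded repulsive configuration, and the Lipschitz sliding vector field obtained from hypothesis (2) --- is the right one and matches how the result is proved in the cited literature.

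There is, however, one genuine gap. Hypothesis (3) only guarantees that at each $z\in S_Z$ \emph{at least one} of the strict inequalities $\langle Z_1(z),n(z)\rangle<0$, $\langle Z_2(z),n(z)\rangle>0$ holds; it does not exclude tangency points where, say, $\langle Z_1(z),n(z)\rangle=0$ while $\langle Z_2(z),n(z)\rangle>0$. At such points your case analysis breaks down: the configuration is neither a transversal crossing nor a sliding mode, a trajectory arriving from $\mathcal{D}_1$ may graze $S_Z$, and your claim that ``the crossing/sliding classification is locally constant along any trajectory'' is false in general (the crossing and sliding sets are open in $S_Z$, but their union need not be all of $S_Z$, and a trajectory can pass through their common boundary). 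This is precisely why Filippov's uniqueness theorem (Theorem 2 of \S 10 in \cite{Fi-0}) does not proceed by concatenation at all, but instead establishes a one-sided differential inequality for $\|z(t)-\tilde z(t)\|^2$ between two putative Filippov solutions, valid across $S_Z$ under condition (3), from which right-uniqueness follows by a Gronwall argument; the same estimate also disposes of any Zeno-type accumulation of switching times, which your concatenation scheme leaves unaddressed. To repair your proof you would either need to add the tangency analysis and a no-accumulation lemma, or replace the concatenation step by the distance-function estimate.
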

	
\subsubsection{A Filippov solution with convergence property} \label{sec:4.3.2}	 In this part, we construct a unique Filippov solution to \eqref{CSB1} with the following convergence property:
\[ \lim_{t \to \infty} |x_1(t) - x_2(t)| = d^{\infty}. \]
Now we return to \eqref{CSB1}. From Definition \ref{Fdef}, it is straightforward to construct a Filippov solution $(x,v)$. 

Below, we sketch the construction procedure of a Filippov solution as follows. First, we define a local classical solution, which is well defined until the sign of $x$ changes. Let $v$ be a velocity when sign of $x$ alters. If $v \neq 0$, then we shift the dynamic and define a new local solution with initial datum $(0,v)$. Then, by repeating this procedure and concatenating the solutions so far, we obtain a Filippov solution until it reaches ${\bf 0} = (0,0)$. Then the remaining issue is uniqueness. We will use Proposition \ref{P4.1} to show that the Filippov solution exists uniquely if and only if $z(t) = (x(t),v(t))\neq {\bf 0}$ for any $t \in \bbr_+$. In what follows, we briefly preview the argument. We set
	\begin{align*}
	\begin{aligned}
		\mathcal{D}_1 :=\{ (x, v) \mid x<0 \}, \quad \mathcal{D}_2 :=\{(x, v) \mid x>0 \}, \quad S_Z:=\{(0,v) \mid v \in \bbr \}, \\
		Z\Big|_{\overline{\mathcal{D}}_1}: \{ (x, v) \mid x \leq 0 \} \to \bbr^2, \quad
		(x,v) \mapsto (v, -(\kappa_0 + \kappa_1) v -\kappa_2 x - \kappa_2 d^{\infty}), \\
		Z \Big|_{\overline{\mathcal{D}}_2}: \{ (x, v) \mid x \geq 0 \} \to \bbr^2, \quad
		(x,v) \mapsto (v, -(\kappa_0 + \kappa_1) v -\kappa_2 x + \kappa_2 d^{\infty}).
	\end{aligned}
	\end{align*}
	Then we can see that assumptions in Proposition \ref{P4.1} are fulfilled except the last one; both $Z \Big|_{\overline{\mathcal{D}}_1}$ and $Z \Big|_{\overline{\mathcal{D}}_2}$ are parallel to $S_Z$ at the origin. To resolve this issue, we modify the vector field $Z$ near the origin. More precisely, for $\varepsilon > 0$, we consider a vector field $Z^\varepsilon$ satisfying
	\begin{align}
	\begin{aligned} \label{New-D-3}
		Z^\varepsilon_{{|\mathcal{D}}_1}:
		(x,v) \mapsto 
		\left(v-\mathrm{sgn}(x)\times \varepsilon \varphi(\varepsilon^{-1}(x,v)), -(\kappa_0 + \kappa_1) v -\kappa_2 x - \kappa_2 d^{\infty}\right), \\
		Z^\varepsilon_{{|\mathcal{D}}_2}:
		(x,v) \mapsto 
		\left(v-\mathrm{sgn}(x)\times \varepsilon \varphi(\varepsilon^{-1}(x,v)), -(\kappa_0 + \kappa_1) v -\kappa_2 x + \kappa_2 d^{\infty}\right),
	\end{aligned}
	\end{align}
	where $\varphi:\bbr^2 \to \bbr$ is a smooth bump function satisfying 
	\[ \varphi \in C^{\infty}(\bbr^2), \quad  \varphi({\bf 0})=1 \quad \mbox{and} \quad  \mbox{spt}(\varphi) \subset B_1{({\bf 0})}. \]
	 Then $Z^\varepsilon$ satisfies the assumptions on Proposition \ref{P4.1}, and we will utilize $Z^\varepsilon$ to show that for any initial datum $(x^0,v^0)$, we have the following dichotomy: \vspace{.3cm}
	\begin{center}
		Either $(x(t^*),v(t^*))=(0,0)$ for some finite $t^*$, \vspace{.2cm} \\
		or there exists a unique global Filippov solution satisfying $\displaystyle \lim_{t \to \infty}|x(t)|=d^\infty$. \vspace{.2cm}
	\end{center}
	Note that, if the former holds, then the Cauchy problem \eqref{CSB1} is ill-posed, and there are infinitely many Filippov solutions (see Theorem \ref{T4.3}). For a Filippov solution $(x,v)$, we set $T$ to be the first hitting at  the origin:
		\begin{equation} \label{New-D-3}
			T := \inf\{ t \in \bbr_+ \mid (x(t), v(t)) = {\bf 0} \}.
		\end{equation}
\begin{lemma}\label{L}
	Suppose that $z^0 = (x^0,v^0) \neq {\bf 0}$ is an initial datum. Then $T \in (0,\infty]$ defined in \eqref{New-D-3} is well defined, and the Cauchy problem \eqref{CSB1} has a unique Filippov solution $z = z(t)$ on $[0,T)$.
	\end{lemma}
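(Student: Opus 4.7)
The plan is to split the argument into three parts: (i) construction of a Filippov solution on a maximal interval by concatenating classical solutions across the switching line $S_Z=\{x=0\}$, (ii) definition and positivity of $T$, and (iii) uniqueness via the regularized field $Z^\varepsilon$ together with Proposition \ref{P4.1}.

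For existence, I would observe that on each open half-plane $\mathcal{D}_1$ and $\mathcal{D}_2$, the field $Z$ restricts to an affine, hence Lipschitz, vector field (see \eqref{CSB2}), so classical Cauchy--Lipschitz theory provides a unique smooth solution until the trajectory first reaches $S_Z$. Starting from $z^0=(x^0,v^0)\neq{\bf 0}$, I would consider two cases. If $x^0\neq 0$, I solve the corresponding linear ODE in the appropriate half-plane up to the first time $t_1$ at which $x(t_1)=0$ (possibly $t_1=+\infty$). If $x^0=0$, then $v^0\neq 0$, and the half-plane chosen is dictated by the sign of $v^0$. At each crossing time $t_k$ at which $v(t_k)\neq 0$, the trajectory transversally enters the opposite half-plane, and I restart the ODE with initial datum $(0,v(t_k))$; the resulting piecewise concatenation is absolutely continuous and, on each open arc in $\mathcal{D}_1\cup\mathcal{D}_2$, satisfies $\dot z= Z(z)$ hence the Filippov differential inclusion. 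The only way this procedure halts is if the trajectory arrives at ${\bf 0}$, because at any $(0,v)$ with $v\neq 0$ the flow simply transits to the other side. This already shows that the set $\{t\in\bbr_+\mid z(t)={\bf 0}\}$ is closed, and by continuity $T>0$ since $z(0)=z^0\neq {\bf 0}$; thus $T\in(0,\infty]$ is well defined and the constructed solution lives on $[0,T)$.

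For uniqueness, the hypothesis (3) of Proposition \ref{P4.1} is satisfied at every point $(0,v)\in S_Z$ with $v\neq 0$: the first component of $Z\big|_{\overline{\mathcal{D}}_i}(0,v)$ equals $v$, which points into $\mathcal{D}_2$ if $v>0$ and into $\mathcal{D}_1$ if $v<0$. The obstacle is the origin, where both $Z\big|_{\overline{\mathcal{D}}_1}(0,0)$ and $Z\big|_{\overline{\mathcal{D}}_2}(0,0)$ reduce to $(0,\pm\kappa_2 d^\infty)$ and therefore lie tangent to $S_Z$, invalidating the pointing condition. This is precisely why the regularization $Z^\varepsilon$ introduced in \eqref{New-D-3} is needed: the added term $-\mathrm{sgn}(x)\,\varepsilon\varphi(\varepsilon^{-1}(x,v))$ in the first component produces a transverse crossing on $S_Z$ inside the $\varepsilon$-ball around the origin, so that $Z^\varepsilon$ satisfies all the assumptions of Proposition \ref{P4.1} and admits a unique Filippov solution $z^\varepsilon$ from $z^0$.

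To transfer uniqueness from $Z^\varepsilon$ to $Z$ on $[0,T)$, I would exploit the defining property of $T$. Fix an arbitrary $T'\in(0,T)$; by continuity of any Filippov solution $z$ with $z(0)=z^0$, the infimum $\delta_0:=\inf_{t\in[0,T']}\|z(t)\|$ is strictly positive. Choosing $\varepsilon<\delta_0$, the perturbation $Z^\varepsilon$ coincides with $Z$ along the trajectory $z$ on $[0,T']$, so $z$ is also a Filippov solution of $\dot z=Z^\varepsilon(z)$; uniqueness for $Z^\varepsilon$ from Proposition \ref{P4.1} then forces $z$ to coincide with $z^\varepsilon$ on $[0,T']$. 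Since $T'$ was arbitrary, uniqueness holds on the whole of $[0,T)$. I expect the main technical care to lie in verifying the pointing condition for $Z^\varepsilon$ and in ruling out trajectories that would reach ${\bf 0}$ tangentially along $S_Z$, but both issues are contained within the $\varepsilon$-ball and are dispatched exactly by the bump-function construction.
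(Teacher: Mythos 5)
Your proposal is correct and takes essentially the same route as the paper: both arguments hinge on the regularized field $Z^\varepsilon$ satisfying the transversality hypothesis of Proposition \ref{P4.1}, the coincidence of $Z$ and $Z^\varepsilon$ outside $B_\varepsilon({\bf 0})$, and exhausting $[0,T)$ by the time intervals on which the trajectory stays outside the $\varepsilon$-ball. The only cosmetic difference is that you build the solution first by concatenating classical arcs and then transfer uniqueness, whereas the paper extracts existence and uniqueness simultaneously from the nested family $z^\varepsilon$ and characterizes $T$ as the limit of the hitting times $T^\varepsilon$.
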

	\begin{proof} Consider the Cauchy problem to the modified system:
	\begin{equation}  \label{New-D-4}
	\begin{cases} 
	\displaystyle \dot{z}^\varepsilon(t) =Z^\varepsilon(z^\varepsilon(t)), \quad t > 0, \\
	\displaystyle z^\varepsilon(0) = z^0.
	\end{cases}
	\end{equation}
	Then, by Proposition \ref{P4.1}, the ODE system $\dot{z}(t)=Z^\varepsilon(z(t))$ has a unique Filippov solution $z^{\varepsilon}$. We take a sufficiently small $\varepsilon$ satisfying 
	\[ 0<\varepsilon<|z^0|, \] and let $T_\varepsilon$ be the first time hitting the $B_\varepsilon({\bf 0})$:
	\[
		T^\varepsilon := \inf \Big \{ t \in \bbr_+ \mid |z^\varepsilon(t)| = \varepsilon \Big \}.
	\]
	Then, since $Z$ and $Z^\varepsilon$ coincide on $\bbr^2 - B_\varepsilon(0,0)$, $\dot{z}(t)=Z(z(t))$ inherits this solution on $[0,T^\varepsilon]$ and this is a unique solution, because otherwise it contradicts the uniqueness of $z^\varepsilon$. As $T_\varepsilon$ is a decreasing function of $\varepsilon$, we can define its limit
	\[
		T:= \lim_{\varepsilon \searrow 0}T^\epsilon, \quad T \in (0,\infty].
	\]
	Note that, for $\varepsilon_1 > \varepsilon_2$, $Z^{\varepsilon_1}$ and $Z^{\varepsilon_2}$ coincide on $\bbr^2-B_{\varepsilon_1}({\bf 0})$, we have
	\[
		z^{\varepsilon_1}(t) = z^{\varepsilon_2}(t), \quad t \in [0, T^{\varepsilon_1}].
	\]
	Thus we may regard $z^{\varepsilon_2}([0,T^{\varepsilon_2}])$ as an extension of $z^{\varepsilon_1}([0,T^{\varepsilon_1}])$, and a solution $z(t)$ of \eqref{CSB1} exists uniquely on $[0,T)=\cup_{\epsilon}[0,T^\varepsilon]$. 
	Now for all $0 < \varepsilon \ll 1$, we have
	\[
		\inf_{t \in [0,T)}\mathrm{dist}(z(t),{\bf 0}) < \inf_{t \in [0,T^\varepsilon]}\mathrm{dist}(z(t),{\bf 0}) = \varepsilon.
	\]
	On the other hand, for any $t^* \in [0,T)$, there exists $0<\delta$ such that $t^* < T^\delta < T$ and therefore
	\[ 
		\inf_{t \in [0,t^*)}\mathrm{dist}(z(t), {\bf 0}) \geq \delta > 0.
	\]
	We combine the results altogether to get 
	\[
		\lim_{t \nearrow T}\mathrm{dist}(z(t),{\bf 0})
		= \inf_{t \in [0,T)}\mathrm{dist}(z(t), {\bf 0}) = 0.
	\]
	Therefore $T$ is the first hitting time of the origin. 
	\end{proof}	
Note that the concatenation of smooth solution mentioned above is in fact a unique Filippov solution. Now, are ready to show that desired distance will be achieved asymptotically despite collisions.
\begin{theorem}\label{T4.3}
	Let ${\bf 0} \neq (x^0,v^0) \in \bbr^2$ be a given initial data of \eqref{CSB1}. Then the following assertions hold.
	\begin{enumerate}
	\item System \eqref{CSB1} admits a unique global Filippov solution $(x,v)$ if and only if $(x(t),v(t))\neq {\bf 0}$ for arbitrary $t \in \bbr_+$. Otherwise, there are infinitely many Filippov solutions.
	\vspace{0.1cm}
	\item If $(x(t),v(t))\neq {\bf 0}$ for arbitrary $t \in \bbr_+$, then $\lim_{t \to \infty}|x(t)|=d^\infty$.
	\end{enumerate}
\end{theorem}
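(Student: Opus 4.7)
The plan is to handle the dichotomy in (1) via a direct computation of the Filippov set-valued map at the origin together with Lemma \ref{L}, and to prove (2) by an energy/Barbalat argument followed by a phase-plane step that forces the trajectory to settle on a single side of $\{x=0\}$.

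For (1), the forward direction is immediate from Lemma \ref{L}: if $z(t) := (x(t), v(t)) \neq {\bf 0}$ for all $t \in \bbr_+$, then the first hitting time $T$ equals $+\infty$, and Lemma \ref{L} supplies a unique Filippov solution on $[0, \infty)$. For the converse, suppose $z(t^*) = {\bf 0}$ at some finite $t^*$. The closed convex hull of $Z|_{\overline{\mathcal{D}}_1}({\bf 0}) = (0, -\kappa_2 d^\infty)$ and $Z|_{\overline{\mathcal{D}}_2}({\bf 0}) = (0, \kappa_2 d^\infty)$ gives $F[Z]({\bf 0}) = \{0\} \times [-\kappa_2 d^\infty, \kappa_2 d^\infty]$, so the stationary extension $z(t) \equiv {\bf 0}$ on $[t^*, \infty)$ is one admissible Filippov solution (taking $\dot v \equiv 0$). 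More generally, for any $\tau \geq 0$ and $\sigma \in \{+, -\}$, the linear ODE on $\mathcal{D}_\sigma$ with datum $(0, 0)$ at time $t^* + \tau$ has a classical solution that immediately enters $\mathcal{D}_\sigma$ (since $\ddot x(t^* + \tau)^+ = \sigma \kappa_2 d^\infty \neq 0$); concatenating stationarity on $[t^*, t^*+\tau]$ with this one-sided solution produces a family of mutually distinct Filippov solutions indexed by $(\tau, \sigma)$, hence uncountably many.

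For (2), assume $z(t) \neq {\bf 0}$ for all $t$ and introduce the energy $E(t) := \tfrac{1}{2} v(t)^2 + \tfrac{\kappa_2}{2}(|x(t)| - d^\infty)^2$. A direct calculation on each open half-line $\{x > 0\}$ and $\{x < 0\}$, mirroring Proposition \ref{CSEE}, gives $\dot E = -(\kappa_0 + \kappa_1) v^2 \leq 0$, so $E \searrow E_\infty \geq 0$ and $\int_0^\infty v(t)^2\, dt < \infty$. Uniform boundedness of $x, v, \dot v$ from the ODE renders $v^2$ uniformly continuous, and Barbalat's lemma (Lemma \ref{Barbalat}) yields $v(t) \to 0$. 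I next claim $x$ crosses $\{0\}$ only finitely many times. If $t_1 < t_2 < \cdots \to \infty$ were an infinite sequence of crossings, then $v_k := v(t_k) \to 0$, and on each interval $(t_k, t_{k+1})$ the trajectory solves the shifted linear ODE $\ddot y + (\kappa_0+\kappa_1) \dot y + \kappa_2 y = 0$ (with $y = x - \sigma_k d^\infty$) from data $(-\sigma_k d^\infty, v_k)$. Explicit analysis of the $v_k = 0$ limit trajectory shows, in all damping regimes, that $|y(t)| < d^\infty$ for every $t > t_k$: in the underdamped regime $(\kappa_0+\kappa_1)^2 < 4\kappa_2$ this follows from the amplitude-envelope estimate $|y(t)| \leq d^\infty (\omega_0/\omega) e^{-(\kappa_0+\kappa_1)(t-t_k)/2}$ with $\omega_0 = \sqrt{\kappa_2}$, $\omega = \sqrt{\kappa_2 - (\kappa_0+\kappa_1)^2/4}$, combined with the inequality $r \ln(1/r) < \sqrt{1-r^2}\arccos r$ valid for $r = \omega/\omega_0 \in (0, 1)$; in the over/critically damped regime it follows from monotonicity of the exponential decomposition. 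By continuity in initial data, whenever $|v_k|$ is sufficiently small the trajectory remains in $\{\mathrm{sgn}(x) = \sigma_k\}$ forever, contradicting the existence of $t_{k+1}$. Since $v_k \to 0$, this forces only finitely many crossings. After the last crossing, $x$ retains a fixed sign $\sigma$, and the linear damped ODE $\ddot x + (\kappa_0+\kappa_1) \dot x + \kappa_2 (x - \sigma d^\infty) = 0$ drives $(x, v) \to (\sigma d^\infty, 0)$ exponentially, hence $|x(t)| \to d^\infty$. The main obstacle is the quantitative finite-crossing claim—specifically, the underdamped amplitude-envelope estimate ensuring that no recrossing occurs when $|v_k|$ is small.
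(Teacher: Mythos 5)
Your proposal is essentially correct but proceeds quite differently from the paper at the two key points. For part (1), the paper splits on the damping regime: when $(\kappa_0+\kappa_1)^2\ge \kappa_2$ it uses the explicit solution \eqref{New-D-4.1} to show the origin is never reached, and only in the underdamped case does it exhibit the non-uniqueness family $f_\tau^{\pm}$ (a rest phase at the origin of duration $\tau$ followed by departure). Your route --- computing $F[Z]({\bf 0})=\{0\}\times[-\kappa_2 d^\infty,\kappa_2 d^\infty]$ directly, observing that the stationary state is admissible, and concatenating it with the one-sided classical solutions (which leave the origin since $\ddot x = \pm\kappa_2 d^\infty\neq 0$ there) --- establishes the same dichotomy uniformly in all regimes and is arguably cleaner; it is the same mechanism as the paper's $f^\pm_\tau$, just not tied to the explicit underdamped formula. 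For part (2), Step A (finitely many crossings implies exponential convergence via the damped oscillator) coincides with the paper's. Step B is where you genuinely diverge: the paper rules out infinitely many collisions \emph{softly}, by first showing a uniform gap $t_{n+1}-t_n>\delta$ (between consecutive collisions the trajectory must pass through $|x|=d^\infty$, so the mean value theorem would force $|v|$ to blow up if gaps shrank), and then noting that Barbalat gives $E_p\to E^\infty$ while $E_p$ takes the values $\tfrac{\kappa_2}{4}(d^\infty)^2$ at each $t_n$ and $0$ at each $t_n^*$, a contradiction. Your quantitative phase-plane argument reaches the same conclusion but buys nothing extra here and carries the burden of the envelope inequality.

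The one step you should shore up is the sentence ``by continuity in initial data, whenever $|v_k|$ is sufficiently small the trajectory remains in $\{\mathrm{sgn}(x)=\sigma_k\}$ forever.'' Continuous dependence on data is only uniform on compact time intervals, whereas you need $x>0$ (say) on all of $(t_k,\infty)$, and the limit trajectory satisfies only $\inf_{t>t_k}x_0(t)=0$ (the infimum is approached as $t\downarrow t_k$), so a naive perturbation bound fails near $t_k$. The fix is available from linearity: writing $x_{v_k}(t)=x_0(t)+v_k\,g(t)$ with $g(t)=\omega^{-1}e^{-\gamma t}\sin\omega t$ bounded, one has $x_{v_k}(t)\approx \sigma_k|v_k|t+\tfrac12\kappa_2 d^\infty t^2>0$ near $t_k$, while away from $t_k$ one uses $\inf_{t\ge t_k+\epsilon}x_0(t)>0$ together with $|v_k|\,\|g\|_\infty$ small. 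With that repair (or by simply adopting the paper's energy-oscillation contradiction, which avoids the issue entirely), your argument is complete.
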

\begin{proof} 
	(1)~Depending on the coupling strengths, we consider two cases:
\[ \mbox{either}~(\kappa_0+\kappa_1)^2 \geq \kappa_2 \quad \mbox{or} \quad (\kappa_0+\kappa_1)^2 < \kappa_2. \]
\noindent $\bullet$~Case A:  Assume that  
	\[ (\kappa_0+\kappa_1)^2 \geq \kappa_2. \]
	If $x_0 \neq 0$, then the global solution is of the form
	\begin{align}\label{New-D-4.1}
		x=k_1\exp\left(-\frac{1}{2}t\left( (\kappa_0+\kappa_1) + K \right)\right)
		+k_2\exp\left(-\frac{1}{2}t\left( (\kappa_0+\kappa_1) - K \right)\right)
		+d_{12}^\infty\mathrm{sgn}(x^0),
	\end{align}
	where $k_1,k_2$ are determined from the initial data and $K:=\sqrt{(\kappa_0+\kappa_1)^2-4\kappa_2}$. Therefore, $|x-\mathrm{sgn}(x^0)d^\infty|$ decreases in time and $|x(t)|$ is always positive. If $x_0 = 0$, we replace $\mathrm{sgn}(x^0)$ by $\mathrm{sgn}(v^0)$ in \eqref{New-D-4.1} and we apply the same argument to see that $|x-\mathrm{sgn}(v^0)d^\infty|$ is decreasing and $|x(t)|$ is positive for all $t>0$
	\vspace{0.2cm}
	
\noindent $\bullet$~Case B: Assume that 
	\[ (\kappa_0+\kappa_1)^2 < \kappa_2. \]
	Suppose that $(x(\tilde{t}),v(\tilde{t}))=0$. We may set $\tilde{t}=0$, and we define functions $f^{\pm}$ as 
		\begin{align}\label{sol}
		\begin{aligned}
		f^{\pm}(x) &:= \mp e^{-\frac{1}{2}t(\kappa_0+\kappa_1)}
		d^\infty \sec \Big[ \arctan\left(-\frac{\kappa_0+\kappa_1}{\sqrt{\kappa_2 - (\kappa_0+\kappa_1)^2}}\right) \Big] \\
		&\hspace{1cm} \times \cos\Big[ \omega{t}+\arctan\left(-\frac{\kappa_0+\kappa_1}{\sqrt{\kappa_2 - (\kappa_0+\kappa_1)^2}}\right) \Big] \pm d^\infty.
		\end{aligned}
	\end{align}
	 Then for each $\tau \geq 0$,
	\[
		f_\tau^{\pm}(x):=
		\begin{cases}
			0 \quad &\text{if}  ~ x \leq \tau \\
			f^{\pm}(x) \quad &\text{if} ~ x > \tau
		\end{cases}
	\] 
	are solutions of \eqref{CSB1} on $[0,\tau+\varepsilon_\tau]$ for some $\varepsilon_\tau>0$. The remaining part is a direct consequence of Lemma \ref{L}. \newline
	
	\noindent (2)~We split its proof into two steps. \newline
	
	\noindent $\bullet$~Step A (finite number of collisions implies convergence): Suppose there exists only a finite number of collisions. If $x(t^*)=0$ for some finite $T$, we have \[ v(t^*) \neq 0. \]
	Consider a sequence of collision times $(t_n)$:
	\[
		(x(t_n),v(t_n))=(0,v(t_n))\neq(0,0), \quad t_1 < t_2 < t_3 < \cdots.
	\]
	Suppose that collision happens only finite times, and let $t_N$ be the last collision time. Then since $t_N$ is the last collision time, $x(t)$ satisfies
	\begin{align}\label{coll}
		\ddot{x}(t)=-(\kappa_0 + \kappa_1) \dot{x}(t) -\kappa_2 x(t) + \kappa_2 \mathrm{sgn}(v(t_N)) d^{\infty},
		\quad x(t)>0,
		\quad t>t_N.
	\end{align}
	Equation $\eqref{coll}_1$ represents a damped harmonic oscillator and it is well known that
	\[
		|x(t)-d^\infty(t)| \lesssim
		\begin{cases}
			\exp\left[ -\frac{1}{2}t\left( (\kappa_0+\kappa_1) - \sqrt{(\kappa_0+\kappa_1)^2-4\kappa_2} \right) \right ]  \quad &\text{if} \quad (\kappa_0+\kappa_1)^2 \geq 4\kappa_2, \\
			\exp\left[ -\frac{1}{2}t(\kappa_0+\kappa_1)\right]  \quad &\text{if} \quad (\kappa_0+\kappa_1)^2 < 4\kappa_2,
		\end{cases}
	\]
	for $t \geq t_N$. Therefore if collisions occur finitely many times, we have a desired convergence. \newline
	
	\noindent $\bullet$~Step B (the number of collisions is finite):~we prove that the number of collisions is finite. Suppose on the contrary that the collisions happen infinitely many times. Now, we claim that there exists $\delta>0$ such that for all $n$,
	\begin{equation} \label{New-D-5}
	t_{n+1}-t_n > \delta.
	\end{equation}
	If not, for all $\varepsilon > 0$, there exists $m$ such that 
	\[ t_{m+1}-t_m < \varepsilon. \]
	From Lemma \ref{L}, the Filippov solution is the continuous concatenation of solutions of either $\eqref{CSB2}_1$ or $\eqref{CSB2}_2$. From the dynamics of a damped harmonic oscillator, whether $(x((t_m,t_{m+1})),v((t_m,t_{m+1})))$ is in $\mathcal{D}_1$ or $\mathcal{D}_2$, we have 
	\[ |x(t_m+\varepsilon^*)|=d^\infty \quad \mbox{for some $0<\varepsilon^*<\varepsilon$}. \]
	Then since $x$ is differentiable in each time interval $(t_m,t_{m+1})$, the mean value theorem implies
	\begin{align*}
		d^\infty=|x(t_{m}+\varepsilon^*)-x(t_{m})|=\varepsilon^* \times |v(t_m^*)| < \varepsilon \times |v(t_m^*)|,
		\quad t_m^* \in (t_m,t_m+\varepsilon^*).
	\end{align*}
	Above, $\varepsilon^*$ can be taken arbitrarily small. This implies that $v$ can be arbitrary large:
	\begin{align}\label{New-D-6}
		\text{for all } 0<\varepsilon, \text{ there exists } t_m^* \in \bbr_+ \text{ such that }  ~ \frac{d^\infty}{\varepsilon} < |v(t_m^*)|.
	\end{align}
	On the other hand, since the continuity of solution yields 
	\[ \lim_{t \searrow t_n}E(t) = \lim_{t \nearrow t_n}E(t) \]
	and energy dissipates in each time interval, kinetic energy is bounded by $E(0)$, and so $v$ is bounded; this is contradictory to \eqref{New-D-6}, and the claim is proved.
	
	Note that total energy is decreasing and continuous on $\bbr_+$, and possibility of non-differentiability occurs only at each $t_n$. Thus $E$ admits a weak derivative and there exists 
	\[ \exists~~\lim_{t \to \infty}E(t) =: E^\infty. \]
	Therefore, we have
	\begin{align*}
		E^\infty-E(t_1)=\lim_{n \to \infty}E(t_n)-E(t_1)
		= -\lim_{n \to \infty}\frac{\kappa_0+\kappa_1}{2}\int_{t_1}^{t_{n}} v^2(t) dt,
	\end{align*}
	where we used the result of claim for the first equality, and Proposition \ref{CSEE} for the second equality. Since energy dissipation bounds both $x$ and $v$ on $\bbr_+$ and non-differentiability occurs only at each $t_n$, which is of measure zero, the result of Lemma \ref{L4.1} is still valid. As continuity of $v^2$ is guaranteed from Lemma \ref{L}, $v^2$ is uniformly continuous on $\bbr_+$. Hence we can apply BarBalat's Lemma to derive $v \to 0$. This implies $E_k \to 0$, and therefore 
\begin{align}\label{New-D-7}
	 \frac{\kappa_2}{4}(|x(t)|-d^\infty)^2=E_p \to E^\infty.
\end{align}
On the other hand, there exists $t_n^*\in (t_n,t_{n+1})$ satisfying
\[
	|x(t_n^*)|=d^\infty, \quad x(t_n)=0, \quad n \in \mathbb{N}, \quad \lim_{n \to \infty} t_n = \lim_{n \to \infty} t_n^* = \infty.
\]
This contradicts \eqref{New-D-7}, and the number of collisions is finite.
\end{proof}
	\section{Numerical simulations} \label{sec:5}
	\setcounter{equation}{0}
	In this section, we provide several numerical simulations for the second-order models in previous sections, and compare them with analytical results. Moreover, we also present several numerical simulations in relation with the convergence of relative distances toward the desired relative distances. 
	\subsection{Kuramoto ensemble} \label{sec:5.1}
	In this subsection, we present several numerical simulations for the Kuramoto model with the bonding force (KMBF) \eqref{Ku-SB} and compare them with those of the original Kuramoto (KM) \eqref{Ku-second} and the version with no Kuramoto term. We also check the consistency with the analytic results in Section \ref{sec:3} with simulation results. For all simulations, we choose $N = 10$ and use the 4th-order Runge-Kutta scheme. Initial data and system parameters are designed to satisfy the sufficient condition \eqref{C-1-3} for complete synchronization in Section \ref{sec:3}. Throughout this subsection, we set
	\[\Delta t=10^{-2},\quad t\in[0,5],\quad \nu_i=0,\quad\forall~i\in[10]. \]
	Recall the forcing terms in \eqref{Ku-SB}:
\begin{align*}
		&\underbrace{\frac{\kappa_0}{10} \sum_{j=1}^{10} \cos(\theta_j - \theta_i)(\omega_j - \omega_i)}_{\mbox{synchronizing force}}+ \underbrace{\frac{\kappa_1}{20}\sum_{j=1}^{10}(\dot{\theta}_j - \dot{\theta}_i)  + \frac{\kappa_2}{20}\sum_{j=1}^{10}(|\theta_j - \theta_i|- \theta^{\infty}_{ij})\textup{sgn}(\theta_j - \theta_i)}_{\mbox{bonding force}}.
\end{align*}
	Let $\Theta^{0}$ and $\Theta^*$ be the initial and target phase configurations:
	\begin{align*}
		\{\theta_i^0\}_{i=1}^{10} = \{&0.1979,\hspace{0.1cm} 0.2580, \hspace{0.1cm}0.2601, \hspace{0.1cm}0.4231,\hspace{0.1cm} 0.4635,\\
		&0.5011,\hspace{0.1cm} 0.5947,\hspace{0.1cm} 0.8710,\hspace{0.1cm} 0.9262,\hspace{0.1cm} 0.9722\},
	\end{align*}
	\begin{align*}
		\theta_i^*=
		\begin{cases}
			\displaystyle 3.5(i-1)^{\circ}\quad&\mbox{if}\quad1\le i\le3,\\
			\displaystyle 12.5^{\circ}+4(i-4)^{\circ}\quad&\mbox{if}\quad4\le i\le7,\\
			\displaystyle 40^{\circ}+5(i-8)^{\circ}\quad&\mbox{if}\quad8\le i\le10,
		\end{cases}
	\end{align*}
	Then, the matrix $ [\theta_{ij}^{\infty}]$ is determined by the target configuration $\Theta^*$ using the following relations:
	\[ |\theta_i^* - \theta_j^*| = \theta_{ij}^{\infty}, \quad i, j \in [10]. \]
	Lastly, $\{\omega_i\}$ is determined to make zero momentum when $(\kappa_0,\kappa_1,\kappa_2)=(1,5,10)$.
	In all simulations, we fix the initial configuration $\Theta^0$ and $[\theta_{ij}^{\infty}]$.  The first three sets of figures are concerned with the second-order Kuramoto model. \newline
In Figure 1, we compare the temporal evolution of phases and decay rates in the complete synchronization process. In Figure \ref{KMKMBF}(A), we can see that the trajectories of KM converge to the common phase, whereas the trajectories of KMBF tends to the preassigned target configuration $\Theta^*$. Of course, the rigorous justification for this convergence has not been verified. In Figure \ref{KMKMBF}(B), we can see that the decay rates for complete synchronization are at least exponential, and complete synchronization for KMBF seems to occur faster than that of KM. This is due to the bonding control so that aggregated configuration tends to the target configuration $\Theta^*$ much faster than the original KM ensemble. Note that the analytical result in Theorem \ref{T3.2} provide a zero convergence of relative frequencies without any decay rate. 
	\begin{figure}
		\begin{subfigure}[t]{3 in}
			\epsfig{file=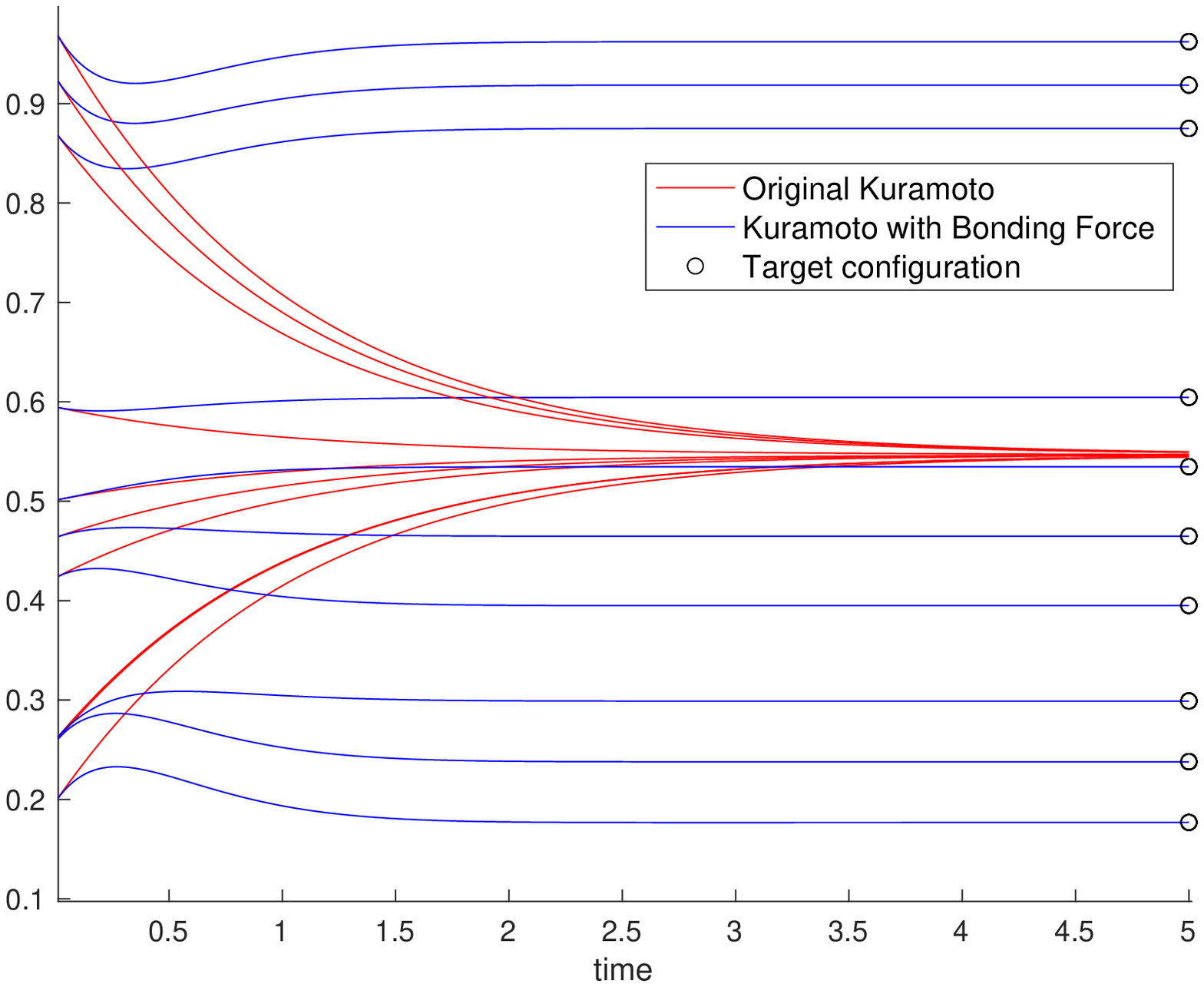, angle=0, width=6.8cm}
			\caption{Evolution of phases}
			\label{KMKMBFA}
		\end{subfigure}
		\begin{subfigure}[t]{3 in}
			\epsfig{file=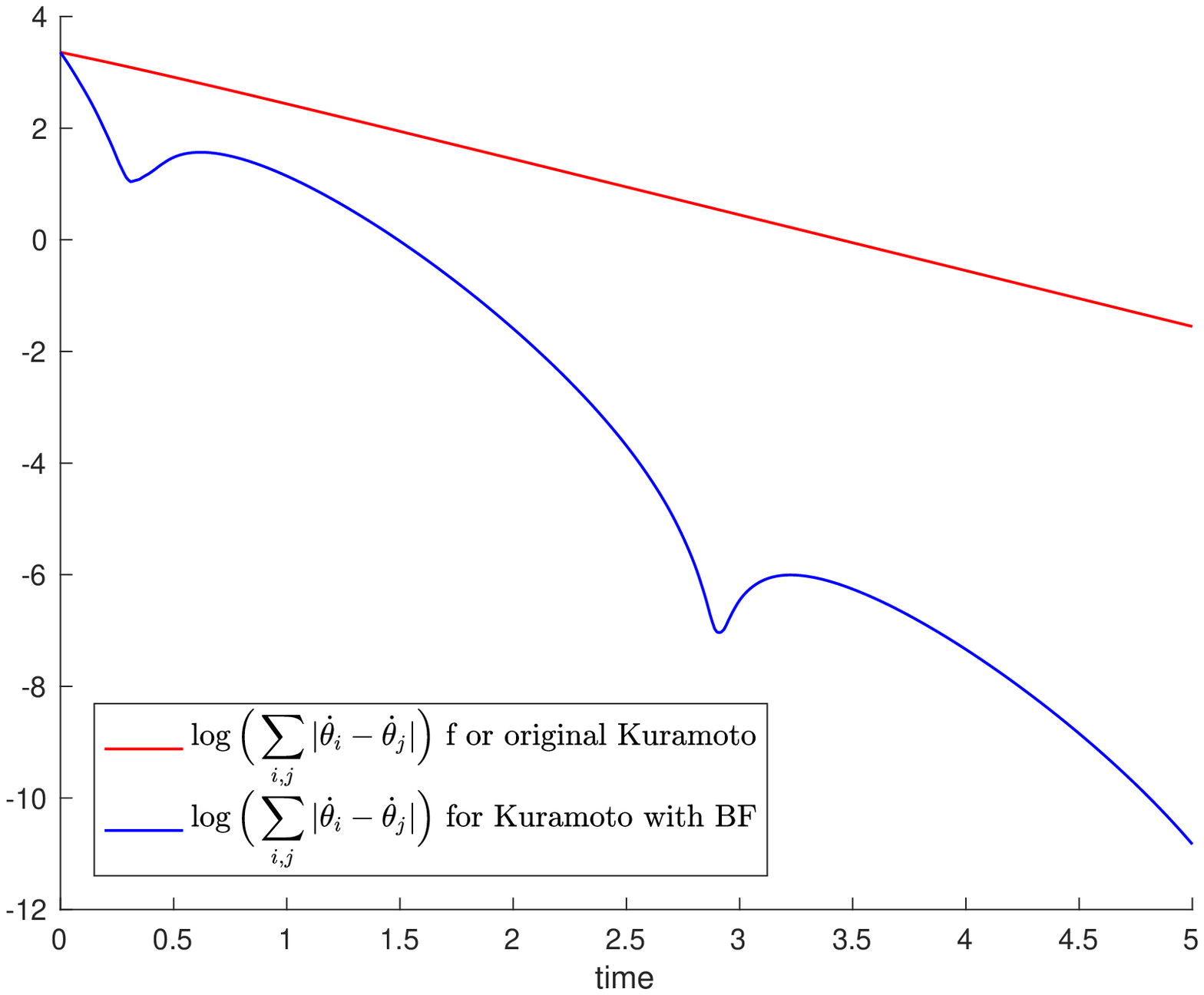, angle=0, width=6.8cm}
			\caption{Convergence rate}
			\label{KMKMBFB}
		\end{subfigure}
		\caption{Comparison of relaxation processes for KM and KMBF}
		\label{KMKMBF}
	\end{figure}
\vspace{0.2cm}	

 In Figure 2, we compare the trajectories of the KMBF with two sets of coupling strengths to observe the impact of Kuramoto term:
	\[ (\kappa_0, \kappa_1, \kappa_2) = 
	\begin{cases}
		(1, 5, 10) \quad & \mbox{Left figure}, \\
		(0, 5, 10) \quad & \mbox{Right figure}.
	\end{cases}
	\]
	In both cases, phase trajectories tend to the target phase configuration $\Theta^*$ as time goes on. The synchronization force $(\kappa_0 > 0$) can affect the trajectories in initial layer, in this case it seems the Kuramoto term makes some attraction force between particles at the beginning, but it does not affect to the resulting target phase configuration. Of course, this obvious fact is not yet proved. 
	\begin{figure}
	\begin{subfigure}[t]{3 in}
			\epsfig{file=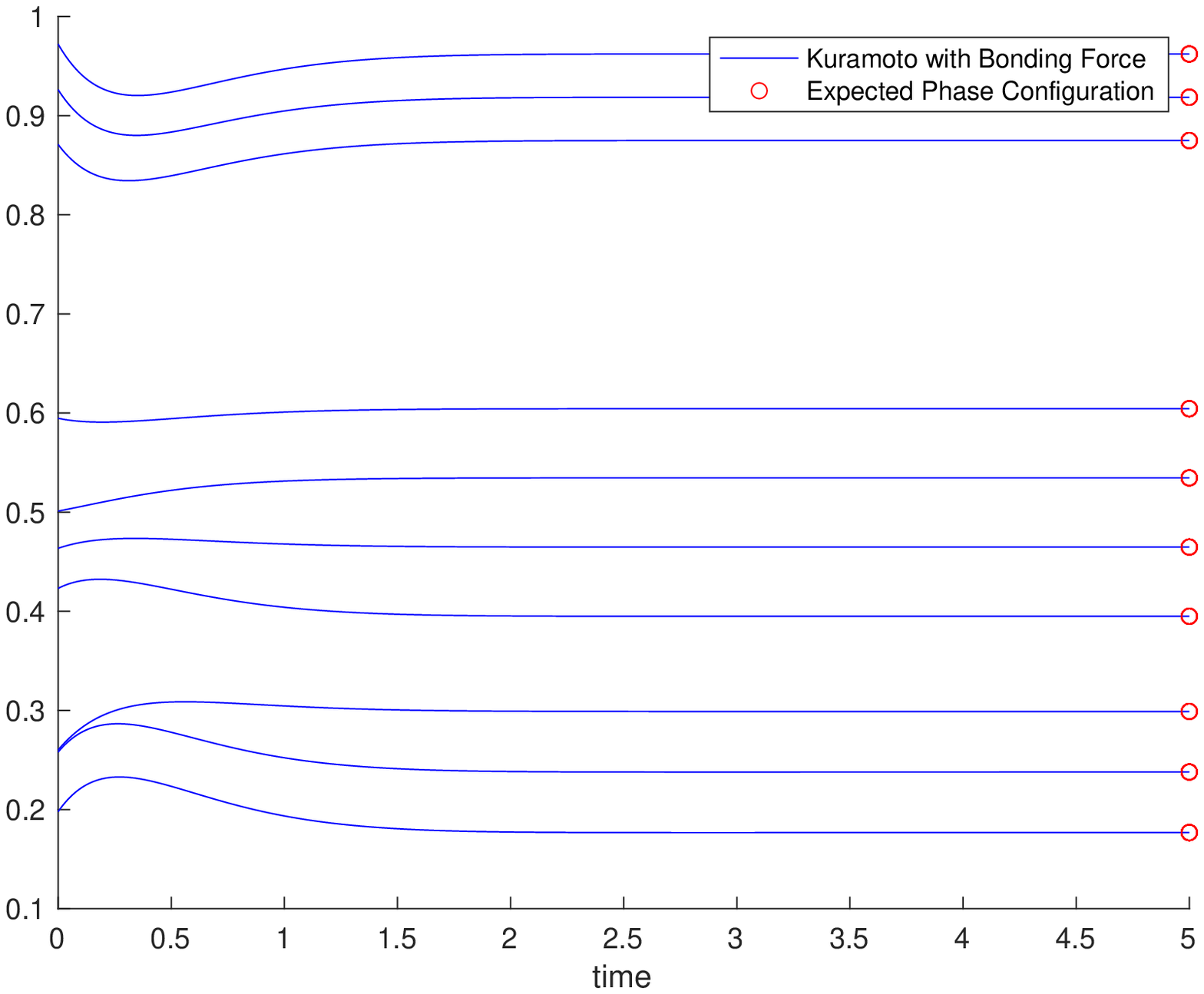, angle=0, width=6.8cm}
			\caption{$(\kappa_0, \kappa_1, \kappa_2) = (1,5,10)$}
		\end{subfigure}
		\begin{subfigure}[t]{3 in}
			\epsfig{file=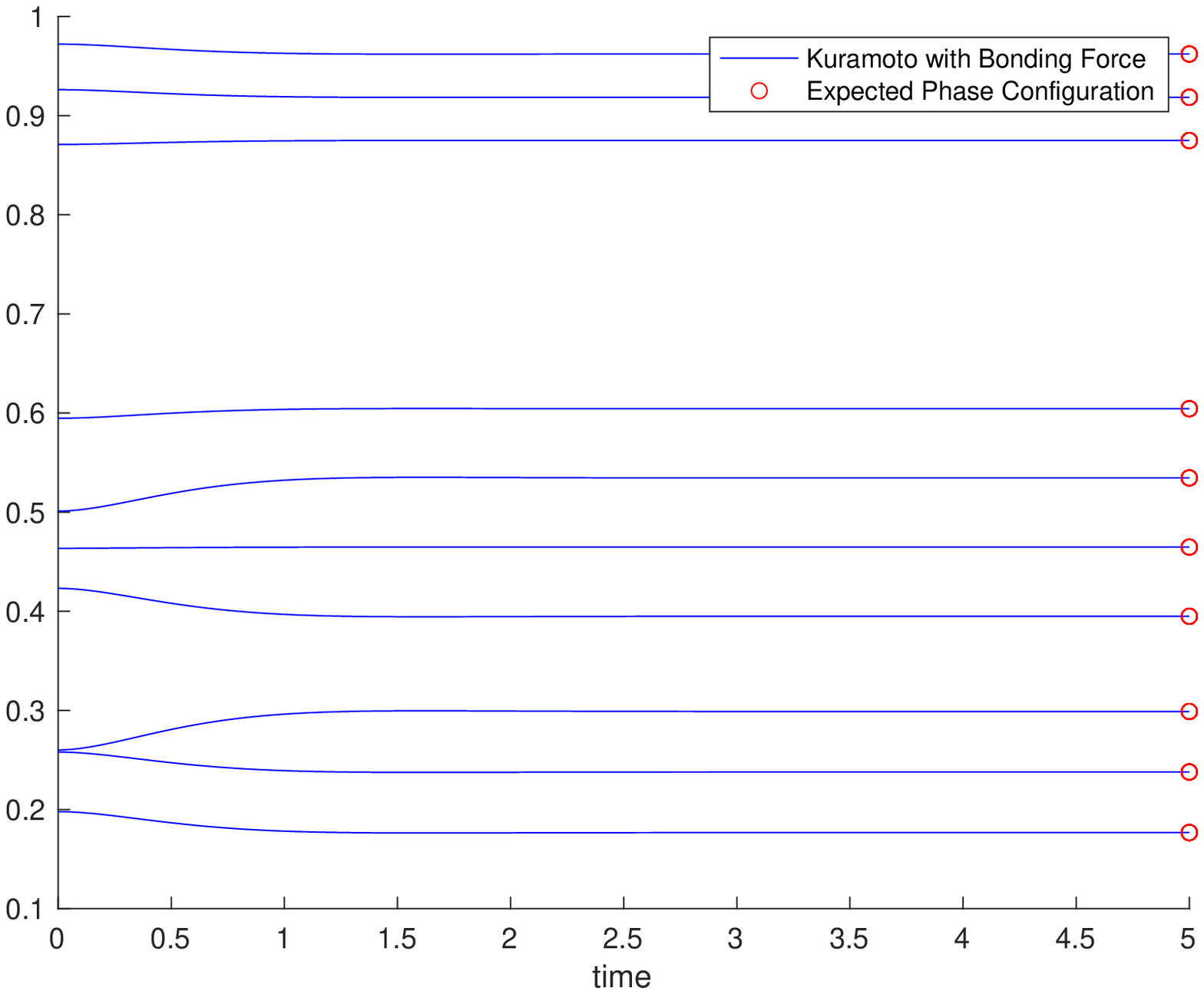, angle=0, width=6.8cm}
			\caption{$(\kappa_0, \kappa_1, \kappa_2) = (0,5,10)$}
			\label{PDW}
		\end{subfigure}
		\caption{Convergence to target configuration}\label{KCONF}
	\end{figure}	
	\vspace{0.2cm}
	
In Figure 3, we see the temporal evolutions of kinetic, potential and total energies for Kuramoto ensemble in a bonding force field. As analytically shown in Proposition \ref{KEE}, total energy monotonically decreases to zero for well-prepared initial data and system parameters, but kinetic and potential energies decay to zero without a monotonicity.  	
	\begin{figure}
	\begin{subfigure}[t]{3 in}
			\epsfig{file=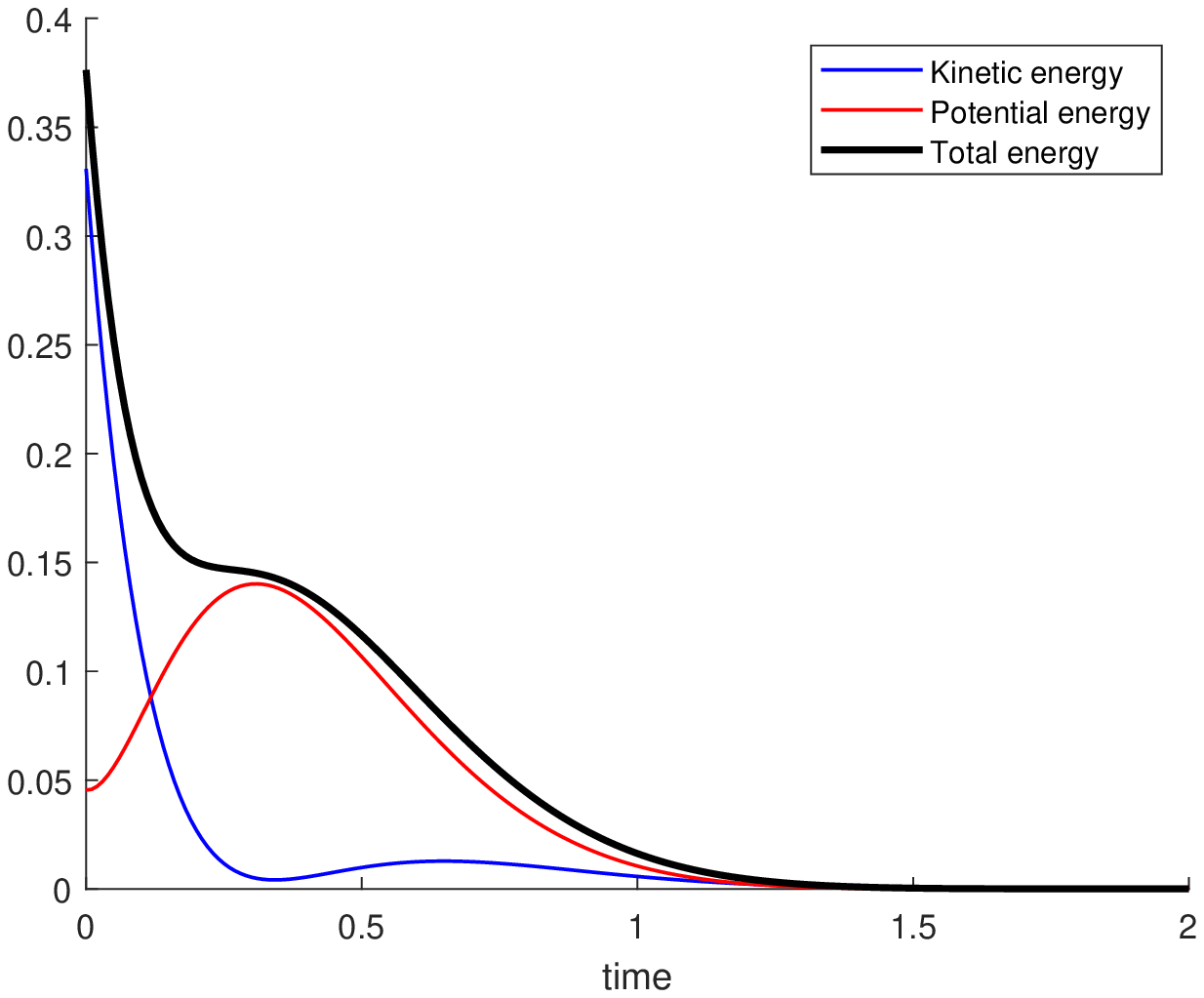, angle=0, width=6.8cm}
			\caption{$(\kappa_0, \kappa_1, \kappa_2) = (0,5,10)$}
		\end{subfigure}
		\begin{subfigure}[t]{3 in}
			\epsfig{file=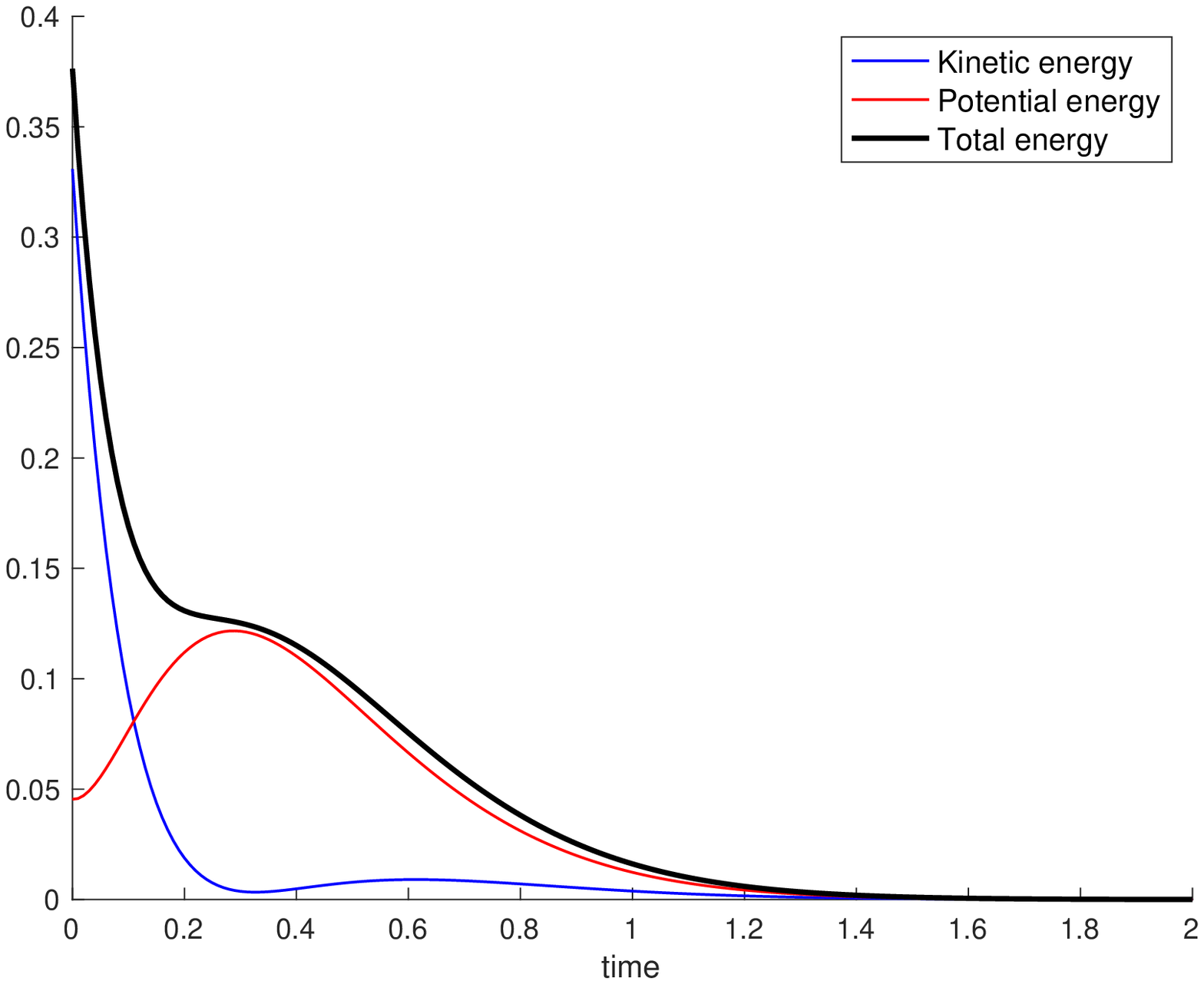, angle=0, width=6.8cm}
			\caption{$(\kappa_0, \kappa_1, \kappa_2) = (1,5,10)$}
			\label{PDW}
		\end{subfigure}
		\caption{Temporal evolution of energies} \label{1510energy}
	\end{figure}	

	\subsection{Cucker-Smale ensemble}
	\hspace{.1cm} In this subsection, we provide various numerical results for one and two dimensional Cucker-Smale system with the bonding force (CSBF) \eqref{CSB}. For all simulations, we use $N = 10$ and the 4th-order Runge-Kutta scheme. Initial data and system parameters are designed to satisfy the sufficient condition \eqref{D-3-1} for a global flocking in Section \ref{sec:4}. We also set
	\begin{align}\label{setting_CS_numeric}
		\Delta t=10^{-2},\quad t\in[0,10],\quad\psi({\bx})=\frac{1}{1+\|{\bx}\|}.
	\end{align}
	Recall the forcing terms \eqref{CSB}:
	\begin{align*}
		&\frac{\kappa_0}{10}\sum_{j=1}^{10}\psi(\|\bx_i-\bx_j\|)\left(\bv_j- \bv_i\right)+\frac{1}{10}\sum_{i\ne j}^{10}
		\left[ \kappa_1 \frac{\langle \bv_j-\bv_i,\bx_j-\bx_i\rangle}{\|\bx_i-\bx_j\|^2}+ \kappa_2\frac{(\|\bx_i-\bx_j\|- d^{\infty}_{ij})}{\|\bx_i-\bx_j\|} \right] (\bx_j-\bx_i).
	\end{align*}
	\hspace{.1cm} We set initial configuration as 
	\begin{table}[H]
		\begin{center}
			\begin{tabular}{c||c|c}
				\hline
				$(\bx_1^0,\bv_1^0)$ & (2.9415, 1.0133) & (0.0100, -0.1275) \\
				\hline
				$(\bx_2^0,\bv_2^0)$ & (-0.1868, 3.0893) & (0.0874, 0.2318) \\
				\hline
				$(\bx_3^0,\bv_3^0)$ & (-2.8378, 0.6900) & (0.0192, 0.1613) \\
				\hline
				$(\bx_4^0,\bv_4^0)$ & (-1.8895, -2.4844) & (0.0450, 0.0151) \\
				\hline
				$(\bx_5^0,\bv_5^0)$ & (1.9088, -2.3172) & (0.0099, -0.0733) \\
				\hline
				$(\bx_6^0,\bv_6^0)$ & (0.4133, 0.9212) & (0.0301, -0.1290) \\
				\hline
				$(\bx_7^0,\bv_7^0)$ & (-0.4425, 0.7271) & (-0.1415, -0.1233) \\
				\hline
				$(\bx_8^0,\bv_8^0)$ & (-0.8685, -0.5283) & (-0.2134, 0.1876) \\
				\hline
				$(\bx_9^0,\bv_9^0)$ & (-0.0589, -0.9098) & (0.0256, -0.0149) \\
				\hline
				$(\bx_{10}^0,\bv_{10}^0)$ & (1.0304, -0.2013) & (0.1278, -0.1280) \\
				\hline
			\end{tabular}
		\end{center}
		\label{ID}
	\end{table}
	Note that the initial data in the table are chosen to satisfy zero sum conditions:
	\[\sum_{i=1}^{10}{\bx}_i^0=0,\quad\mbox{and}\quad\sum_{i=1}^{10}{\bv}_i^0=0.\]
	The matrix $[d_{ij}^\infty]$ is determined by the relative distances among given 10 points $\{{\bf s}_i\}_{i=1}^{10}$ which are called the target configuration:
	\begin{align*}
		{\bf s}_i =
		\begin{cases}
			\frac{3}{2}\left(\cos(18+72(i-1))^{\circ},\sin(18+72(i-1))^{\circ}\right),\quad\mbox{if}\quad1 \leq i \leq 5, \\
			\frac{1}{2}\left(\cos(54+72(i-1))^{\circ},\sin(54+72(i-1))^{\circ}\right),\quad\mbox{if}\quad \leq i \leq 10.
		\end{cases}
	\end{align*}
In all the simulations, we fix the initial configuration $(X^0, V^0)$ and the matrix  $[d_{ij}^\infty]$. For the spatial pattern configuration, if  $[d_{ij}^\infty]$ is randomly given, the existence of particles satisfying the distances  $[d_{ij}^\infty]$ is not guaranteed.

In Figure \ref{1510CSenergy}, we can see the temporal evolutions of kinetic, potential and total energies for two different set of coupling strengths:
\[ (\kappa_0,\kappa_1,\kappa_2):~(0,5,10) \quad (1,5,10). \]
Note that in (A), the kinetic energy decays to zero asymptotically for a solution with zero total momentum which is consistent with Theorem \ref{T4.2} (ii). In addition, the potential energy also decrease to zero asymptotically and this means that all particles maintain the expected distances. In (B), the story of potential energy is the same with that of (A). However, the kinetic energy converges to nonzero implying that it does not exhibit asymptotic flocking. From this numeric simulations, we can derive that the condition of strictly positive $\kappa_0$ in \eqref{D-0-0} is tightened.\newline
	\begin{figure}
	\begin{subfigure}[t]{3 in}
			\epsfig{file=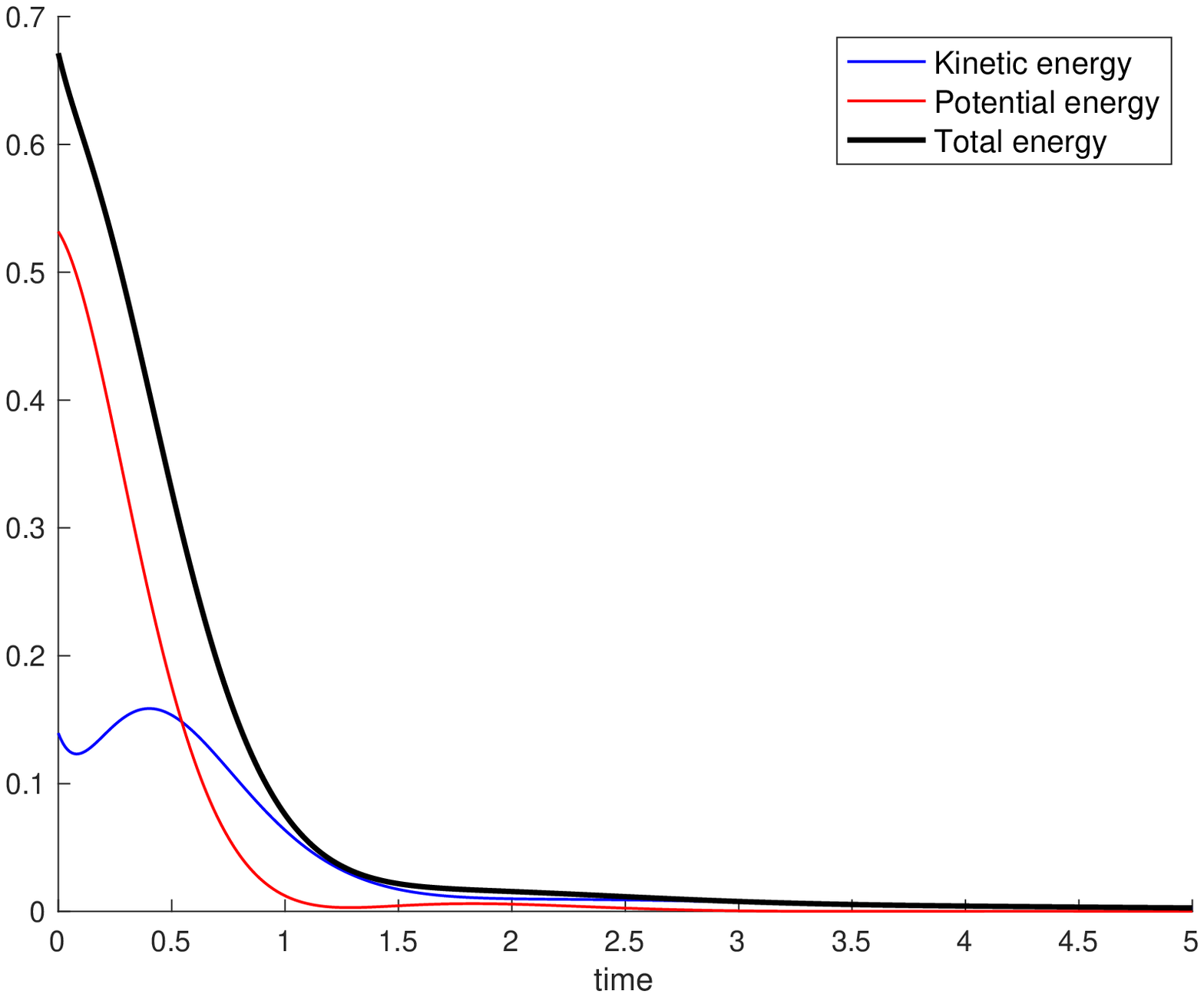, angle=0, width=6.8cm}
			\caption{$(\kappa_0, \kappa_1, \kappa_2) = (1,5,10)$}
		\end{subfigure}
		\begin{subfigure}[t]{3 in}
			\epsfig{file=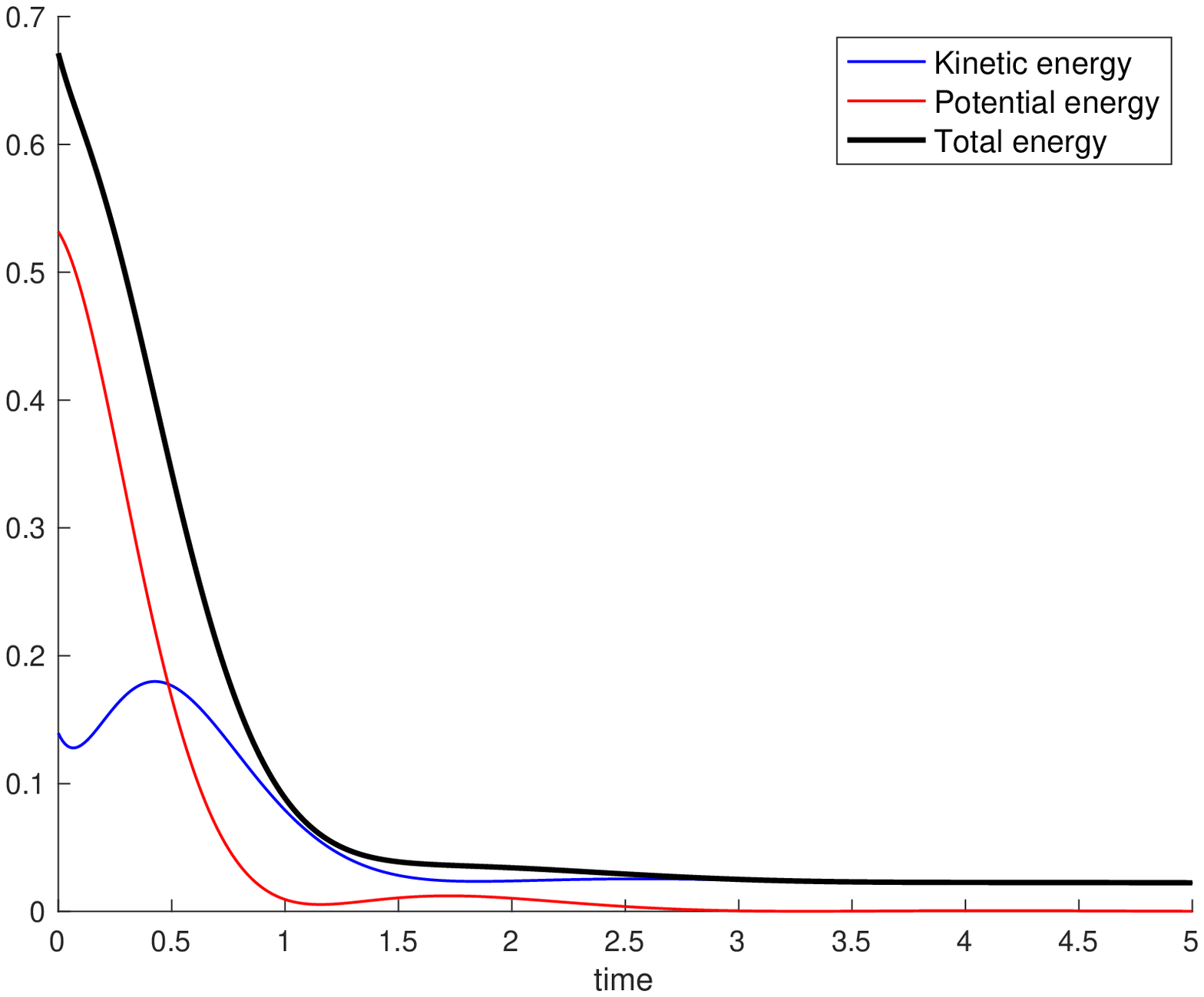, angle=0, width=6.8cm}
			\caption{$(\kappa_0, \kappa_1, \kappa_2) = (0,5,10)$}
			\label{PDW}
		\end{subfigure}
		\caption{Temporal evolution of energies for $d = 2$} \label{1510CSenergy}
	\end{figure}
 
Finally, we consider the convergence of relative distances for the Cucker-Smale system with bonding force(CSBF) on the line $d=1$. In this situation, the convergence of relative distances to the preassinged set $[d_{ij}^{\infty}]$ can be observed numerically.
	
In Figure \ref{CS_1d_energy}, we focus on the rate of reduction of kinetic, potential and total energies, respectively. We maintain numerical settings \eqref{setting_CS_numeric} and assume zero total momentum as well. Likewise in the Kuramoto system with a bonding force which is 1-dimensional system, 1-dimensional CSBF also exhibits precise configuration of particles compared to expected distances. For simulations, we adopted $(\kappa_0,\kappa_1,\kappa_2)=(1,1,40)$ and the initial and target position configurations as follows:
	\begin{align*}
		\{x_i^0\}_{i=1}^{10}=\{&-29.5926,\hspace{0.1cm} -16.5471, \hspace{0.1cm}-8.9365, \hspace{0.1cm}-3.5433,\hspace{0.1cm} -0.6838,\\
		&1.0488,\hspace{0.1cm} 4.1392,\hspace{0.1cm} 9.2734,\hspace{0.1cm} 17.4788,\hspace{0.1cm} 30.4824\},
	\end{align*}
		\[\{x_i^*\}_{i=1}^{10}=\{-30,\hspace{0.1cm} -17, \hspace{0.1cm}-9, \hspace{0.1cm}-4,\hspace{0.1cm} -1,\hspace{0.1cm} 1,\hspace{0.1cm} 4,\hspace{0.1cm} 9,\hspace{0.1cm} 17,\hspace{0.1cm} 30\}.\]
This simulation has the energy configuration as in Figure \ref{CS_1d_energy}.
	\begin{figure}
		\centering
		\begin{subfigure}[c]{3 in}
			\epsfig{file=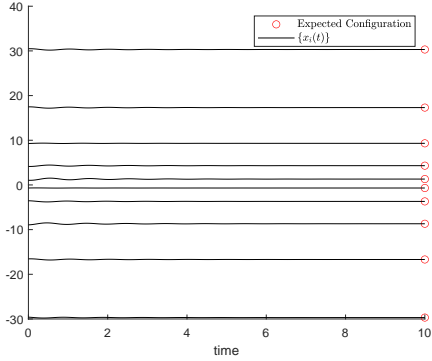, angle=0, width=6.8cm}
			\caption{ Temporal evolution of $\{x_i\}_{i=1}^{10}$}
		\end{subfigure}
		\begin{subfigure}[c]{3 in}
			\epsfig{file=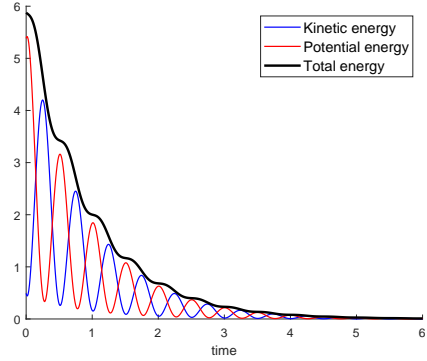, angle=0, width=6.8cm}
			\caption{Temporal evolution of energies}
		\end{subfigure}	
		\begin{subfigure}[c]{3 in}
			\epsfig{file=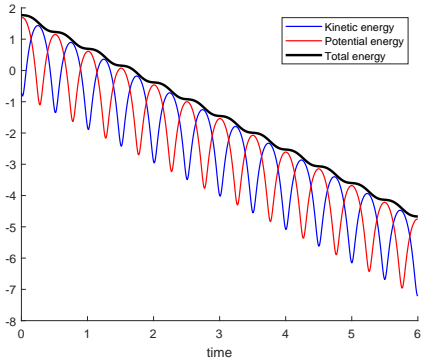, angle=0, width=6.8cm}
			\caption{Decay rates of energeis}
		\end{subfigure}	
		\begin{subfigure}[c]{3 in}
			\epsfig{file=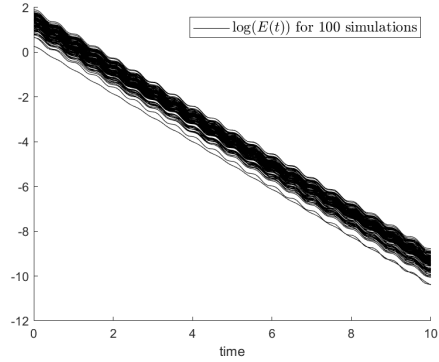, width=6.8cm}
			\caption{Decay of total energy for 100 simulations}      
		\end{subfigure}
		\caption{Convergence of relative distances for $d = 1$}
		\label{CS_1d_energy}
	\end{figure}
Note that the total energy monotonically decrease in (B) whereas kinetic and potential decay to zero with infinite number of oscillations. To investigate the flows of decresing rate and oscillation amplitude, we measure them in a logarithm scale as in (C). With this same context, we conducted 100 simulations where each has only difference in initial data of position and velocity. The simulation result (D) illustrates that the amplitude could be different depending on initial data. However, the exponential decay rate seems to be the same irrespective of initial data.

		\section{Conclusion} \label{sec:6}
	In this paper, we have introduced two second-order nonlinear consensus models with an inter-particle bonding force, namely the ``{\it Kuramoto model with a bonding force}" and the ``{\it Cucker-Smale model with a bonding force}". These proposed models contain singular terms in the bonding force which are singular at the instant in which some state variables coincide with. We simply call these situations as collisions. Thus, if we work in a classical framework of well-posedness given by the Cauchy-Lipschitz theory, we can show the nonexistence of finite collisions and exhibit asymptotic consensus estimates under some conditions on system parameters and initial data. In this direction, we provided several explicit analytical frameworks for collision avoidance and collective dynamics of the proposed models. The proposed frameworks are formulated in terms of system parameters and initial data.  Of course, there any many interesting issues that we did not touch in the current work. To name a few, we first consider the convergence issue of relative states. When the coupling strengths are sufficiently small so that the proposed models can be regarded as the perturbations of the corresponding linear flows, finite-time collisions can emerge. In this case, we may work with a Filippov framework which is beyond the scope of the current work. However, for a two-particle system on the real line, we show that the relative states for Filippov solution tend to the preassigned values even if finite-time collisions are present with the help of Filippov theory. Thus, the generalization of this special case to a one-dimensional setting for a many-body system will be an interesting open problem. The  extension of current work to the relativistic and manifold setting as in \cite{H-K-Rug-1,H-K-S} will be another interesting direction for a futrue work.	
	\newpage
	
	\appendix
	
	\section{Bonding feedback control for the Cucker-Smale model} \label{App-A}
	\setcounter{equation}{0}
	In this appendix, we present a formal heuristic derivation of \eqref{CSB} on the Euclidean space $\mathbb{R}^d$ following the arguments in \cite{P-K-H}. First, we begin with the Cucker-Smale model with all-to-all coupling and forcing term:
	\begin{equation}  \label{AB-0}
		\begin{cases}
			\displaystyle {\dot{\bx}_i} =\bv_i,\quad t >0,~ i \in [N],\\
			\displaystyle {\dot{\bv}_i} =\frac{\kappa_0}{N}\sum_{j=1}^{N} \psi(\|\bx_j - \bx_i\|) \left(\bv_j- \bv_i\right)  + \bbf_{i}.
		\end{cases}
	\end{equation}
	Here, $\bbf_{ji}$ is a bonding force exerted on the test $i$ particle by field particle $j$ and bonding force $\bbf_i = \frac{1}{N} \sum_{j= 1}^{N}  \bbf_{ji}$ is the bonding force exerted on the test $i$ particle by all the  field particles.  To focus on the design of a bonding force $\bbf_{ji}$, we set the unit tangent vector $\bn_{ij}$ in the direction of $\bx_i - \bx_j$: 
	\[ \bn_{ij} := \frac{\bx_i-\bx_j}{\|\bx_i-\bx_j\|}=\frac{\bx_i-\bx_j}{r_{ij}}. \] 
	Recall that our goal is to design a force:
	\begin{equation} \label{AB-1-1}
		\bbf_{ji} = f_{ji} \bn_{ij}
	\end{equation}
	such that in the absence of other particles except $i$-th and $j$-th particle, the relative velocity is exactly zero
	\[  \|\bv_j-\bv_i\| = 0, \]
	at the instant in which $r_{ij} =d_{ij}^{\infty}$. \newline
	
	Next, we define the deviation functional $e_{ij}$ and velocity component $v_{i,j}$ of $\bv_i$ along the direction of $\bn_{ij}$:
	\begin{equation} \label{AB-2}
		e_{ij}:=r_{ij} - d^{\infty}_{ij}, \quad v_{i,j} := \langle \bv_i, \bn_{ij} \rangle, \quad v_{j,i} := \langle \bv_j, \bn_{ji} \rangle. 
	\end{equation}
	Moreover, we design the pairwise inter-particle bonding force magnitude $f_{ij}$ to satisfy
	\begin{align}\label{AB-1}
		\dot{v}_{i,j}:=f_{ji},\quad \dot{v}_{j,i}:=f_{ij},\quad f_{ji}=f_{ij}.
	\end{align}
	It follows from \eqref{AB-2} and \eqref{AB-1} that 
	\begin{align}
	\begin{aligned} \label{AB-3}
			\dot{e}_{ij} &= \dot{r}_{ij}= \langle \bv_i - \bv_j, \bn_{ij} \rangle  =  \langle \bv_i, \bn_{ij} \rangle - \langle  \bv_j, \bn_{ij} \rangle =  \langle \bv_i, \bn_{ij} \rangle  +  \langle  \bv_j, \bn_{ji} \rangle 
			= v_{i,j} + v_{j,i},  \\
			\ddot{e}_{ij} &= \dot{v}_{i,j} + \dot{v}_{j,i}=  f_{ji}  + f_{ij} = 
			2f_{ji},
	\end{aligned}
	\end{align}	
	where we used the relations:
	\[ f_{ij} = f_{ji}, \quad -\bn_{ij} = \bn_{ji}. \]
	Now, we choose $f_{ji}$ in \eqref{AB-3} as follows:
	\begin{equation} \label{AB-4}
	  f_{ji} = -\kappa_1 {\dot e}_{ij} -\kappa_2 e_{ij},
	\end{equation}
	where $\kappa_1$ and $\kappa_2$ are positive constants. Thus, $e_{ij}$ satisfies 
	\[
	\ddot{e}_{ij}+ 2\kappa_1\dot{e}_{ij}+ 2\kappa_2{e}_{ij}=0.
	\]
	This yields that  $e_{ij}$ and ${\dot e}_{ij}$ tend to zero asymptotically:
	\[ \lim_{t \to \infty} e_{ij}(t) = 0 \quad \mbox{and} \quad  \lim_{t \to \infty} {\dot e}_{ij}(t) = 0. \]
	Finally, it follows from \eqref{AB-2}, \eqref{AB-3} and \eqref{AB-4} that 
	\begin{align}
	\begin{aligned} \label{AB-4-0}
	 f_{ji}  &= - \kappa_1 {\dot e}_{ij} - \kappa_2 e_{ij} = -\kappa_1 \langle \bv_i - \bv_j, \bn_{ij} \rangle  - \kappa_2 (r_{ij} - d^{\infty}_{ij})  \\
	 &=  -\kappa_1 \langle \bv_j - \bv_i, \bn_{ji} \rangle  - \kappa_2 (r_{ij} - d^{\infty}_{ij}) \\
	 &=  -\kappa_1 \Big \langle \bv_j -\bv_i, \frac{\bx_j-\bx_i}{r_{ji}} \Big \rangle - \kappa_2 (r_{ij} - d^{\infty}_{ij}).
	\end{aligned}
	 \end{align}
	Again, by \eqref{AB-1-1}, one has 
	\begin{equation} \label{AB-5}
		\bbf_{ji} = f_{ji} \bn_{ij} =  - f_{ji} \bn_{ji} = \kappa_1 \Big \langle \bv_j-\bv_i, \frac{\bx_j-\bx_i}{r_{ji}} \Big \rangle \frac{(\bx_j - \bx_i)}{r_{ji}} +  \kappa_2 (r_{ij} - d_{ij}^{\infty} ) \frac{(\bx_j - \bx_i)}{r_{ji}}.
	\end{equation}	
	Hence, we combine \eqref{AB-0} and \eqref{AB-5} to find 
	\begin{equation}
		\begin{cases} \label{CSB'}
			\displaystyle {\dot{\bx}_i} =\bv_i,\quad t>0,\quad  i \in [N],\\
			\displaystyle {\dot{\bv}_i} =\frac{\kappa_0}{N}\sum_{j=1}^{N}\psi(r_{ij})\left(\bv_j- \bv_i \right)\\
			\hspace{0.5cm}+\displaystyle\frac{\kappa_1}{N}\sum_{j \neq i}^N \Big \langle \bv_j-\bv_i, \frac{\bx_j-\bx_i}{r_{ji}} \Big \rangle \frac{(\bx_j-\bx_i)}{r_{ji}}  + \frac{\kappa_2}{N}\sum_{j \neq i}^N 
			(r_{ij} - d^{\infty}_{ij}) \frac{(\bx_j-\bx_i)}{r_{ji}}.
		\end{cases}
	\end{equation}

	\section{Bonding feedback control for the Kuramoto model} \label{App-B}
	\setcounter{equation}{0}
	In this appendix, we discuss a heuristic derivation of inter-particle bonding force for the second-order Kuramoto model following the same strategy in Appendix A:
	\begin{equation} \label{AA-0-0}
\begin{cases}
\displaystyle {\dot \theta}_i = \omega_i, \quad t > 0, \quad  i \in [N], \\
\displaystyle {\dot \omega}_i =  \frac{\kappa_0}{N} \sum_{j=1}^{N} \cos(\theta_j - \theta_i)(\omega_j - \omega_i) + \frac{1}{N} \sum_{j \neq i}  \bbf_{ji}.
\end{cases}
\end{equation}

  \subsection{Differential geometry for the unit circle} In this part, we briefly discuss minimum materials regarding the differential geometry of the unit circle ${\mathbb S}^1$ which can be regarded  as  the one-dimensional Riemannian manifold embedded in ${\mathbb R}^2$. Let $\bx \in {\mathbb S}^1$.  Then, the exponential map $\exp_{\bx} : T_{\bx} {\mathbb S}^1 \to {\mathbb S}^1$ is defined by $\exp_{\bx} \bv = \gamma(1),$
	where $\bv \in T_{\bx} {\mathbb S}^1$ and $\gamma(t) : [0, 1] \to {\mathbb S}^1$ is a geodesic on $\bbs^1$ satisfying 
	\[ \gamma(0) = \bx \quad \mbox{and} \quad  {\dot \gamma}(0) = \bv. \]
	Let ${\tilde {\mathcal I}}_{\bx} \subset  T_{\bx} {\mathbb S}^1$ be the maximal open set on which $\exp_{\bx}$ is a diffeomorphism
	and define the interior set as ${\mathcal I}_{\bx} := \exp_{\bx} ({\tilde {\mathcal I}}_{\bx}) \subset {\mathbb S}^1$. The exponential map is invertible on ${\tilde{\mathcal I}}_{\bx}$, hence we define its inverse as the logarithm map $\log_{\bx} := \exp_{\bx}^{-1} : {\mathcal I}_{\bx} \to T_{\bx} {\mathbb S}^1$:
	\[ \log_{ \bx} { \by} = {\dot {\gamma}}(0), \quad  { \by} \in {\mathcal I}_{  \bx}. \]
	Here, $\gamma: [0, 1] \to {\mathbb S}^1$ is the length minimizing geodesic satisfying $\gamma(0) = \bx$ and $\gamma(1) = \by.$
	The bonding control term $f_{ij}$ acts along the unit tangent vector 
	\[ \frac{\log_{\bx_i}\bx_j}{d(\bx_i,\bx_j)} =  \frac{\log_{\bx_i}\bx_j}{d_{ij}},\quad \bx_i\neq \pm\bx_j,\] where $d_{ij} := d(\bx_i,\bx_j)$ is a length-minimizing geodesic distance between $\bx_i$ and $\bx_j$. Note that 
	the term $ \frac{\log_{\bx_j}\bx_i}{d(\bx_j,\bx_i)}$ will play the same role of $\bn_{ij}$ in Appendix A. \newline
	
	Recall that the explicit forms for the \textit{exponential} mapping for $(\bx,\bv)\in T\mathbb{S}^1$ and the length-minimizing geodesic curve $\gamma$ on $t\in [0,1]$ are given as 
	\[
	\exp_{\bx}\bv =\cos(\|\bv\|)\bx+\frac{\sin(\|\bv\|)}{\|\bv\|}\bv, \quad \gamma(t):=\cos(\|\bv\|t)\bx+\frac{\sin(\|\bv\|t)}{\|\bv\|}\bv.  
	\]
	Thus, one has
	\begin{align}
	\begin{aligned} \label{AA-0}
		&\log_{\bx}\hat{\bx}:=\bv\neq 0 \iff \exp_{\bx}\bv=\hat{\bx} \iff \hat{\bx}=\cos(\|\bv\|)\bx+\frac{\sin(\|\bv\|)}{\|\bv\|}\bv \\
		& \hspace{1cm} \iff \bv=\|\bv\|\cdot\frac{\hat{\bx}-\cos(\|\bv\|)\bx}{\sin(\|\bv\|)} \iff \bv=d(\bx,{\hat{\bx}})\cdot\frac{\hat{\bx}-\langle \bx,\hat{\bx}\rangle \bx}{\sqrt{1-\langle \bx,\hat{\bx}\rangle^2}}.
	\end{aligned}
	\end{align} 
	
\subsection{A formal derivation of a bonding force} First, note that exponential and lograithmn maps can be defined only when an injectivity radius is well-defined, and the injectivity radius of the unit-circle is $\pi$. We set 
	\begin{equation} \label{AA-1}
	 \bx_i:=(\cos\theta_i,\sin\theta_i), \quad \bv_i:=\dot{\theta}_i(-\sin\theta_i,\cos\theta_i).
	 \end{equation} 
	 \begin{lemma} \label{LB-1}
	Suppose that the time-dependent ensemble $\{\theta_i(t) \}$ satisfies
	\begin{equation}  \label{AA-1-1}
	|\theta_j-\theta_i|<\pi, \quad t \geq 0.
	\end{equation}
	Then, the following estimates hold.
	\begin{align*}
	\begin{aligned}
	& (i)~d_{ij}=|\theta_j-\theta_i|, \quad \frac{\log_{\bx_i}\bx_j}{d_{ji}} =\mbox{sgn}(\theta_j-\theta_i) (-\sin\theta_i,\cos\theta_i). \\
	& (ii)~\Big \langle \bv_i,  \frac{\log_{\bx_i}\bx_j}{d_{ji}}   \Big \rangle +  \Big \langle \bv_j,  \frac{\log_{\bx_j}\bx_i}{d_{ji}}   \Big \rangle = -( \dot{\theta}_j  -\dot{\theta}_i)  \mbox{sgn}(\theta_j-\theta_i).
	\end{aligned}
	\end{align*}
	  \end{lemma}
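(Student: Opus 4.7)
The plan is to verify (i) by direct substitution into the explicit formula \eqref{AA-0} for the logarithm map on $\mathbb{S}^1$, and then to deduce (ii) from (i) together with the coordinate expressions \eqref{AA-1} for $\bv_i$ and $\bv_j$.

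For part (i), I would first observe that since the injectivity radius of $\mathbb{S}^1$ is $\pi$ and \eqref{AA-1-1} gives $|\theta_j-\theta_i|<\pi$, the length-minimizing geodesic between $\bx_i$ and $\bx_j$ is the shorter arc, whose arclength equals $|\theta_j-\theta_i|$; hence $d_{ij}=|\theta_j-\theta_i|$. For the direction vector, I would plug $\bx_i=(\cos\theta_i,\sin\theta_i)$ and $\bx_j=(\cos\theta_j,\sin\theta_j)$ into the last expression of \eqref{AA-0}, noting that $\langle \bx_i,\bx_j\rangle=\cos(\theta_j-\theta_i)$ and $\sqrt{1-\langle \bx_i,\bx_j\rangle^2}=|\sin(\theta_j-\theta_i)|$. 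Using the angle-addition identities $\cos\theta_j=\cos(\theta_j-\theta_i)\cos\theta_i-\sin(\theta_j-\theta_i)\sin\theta_i$ and $\sin\theta_j=\sin(\theta_j-\theta_i)\cos\theta_i+\cos(\theta_j-\theta_i)\sin\theta_i$, the numerator simplifies to $\sin(\theta_j-\theta_i)(-\sin\theta_i,\cos\theta_i)$, and dividing by $|\sin(\theta_j-\theta_i)|$ yields the factor $\mathrm{sgn}(\sin(\theta_j-\theta_i))$. Under \eqref{AA-1-1} this sign coincides with $\mathrm{sgn}(\theta_j-\theta_i)$, giving the claimed formula for $\log_{\bx_i}\bx_j/d_{ji}$.

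For part (ii), I would simply exploit the orthonormality structure already present in \eqref{AA-1}: the vector $(-\sin\theta_i,\cos\theta_i)$ is the unit tangent along which $\bv_i$ is aligned. Using part (i),
\[
\Big\langle \bv_i,\tfrac{\log_{\bx_i}\bx_j}{d_{ji}}\Big\rangle=\dot\theta_i\,\mathrm{sgn}(\theta_j-\theta_i)\,\big\langle(-\sin\theta_i,\cos\theta_i),(-\sin\theta_i,\cos\theta_i)\big\rangle=\dot\theta_i\,\mathrm{sgn}(\theta_j-\theta_i),
\]
and swapping indices (which turns $\mathrm{sgn}(\theta_j-\theta_i)$ into $\mathrm{sgn}(\theta_i-\theta_j)=-\mathrm{sgn}(\theta_j-\theta_i)$) gives $\langle \bv_j,\log_{\bx_j}\bx_i/d_{ji}\rangle=-\dot\theta_j\,\mathrm{sgn}(\theta_j-\theta_i)$. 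Summing these two expressions yields the identity in (ii).

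There is essentially no technical obstacle; the argument is a bookkeeping exercise. The only subtle point is making sure that the sign identity $\mathrm{sgn}(\sin(\theta_j-\theta_i))=\mathrm{sgn}(\theta_j-\theta_i)$ is used only in the regime $|\theta_j-\theta_i|<\pi$, which is precisely guaranteed by the hypothesis \eqref{AA-1-1}; outside that regime the log map is not single-valued, so this hypothesis is both necessary and sufficient for the formulae to make sense. I would explicitly note this to justify why the same framework \eqref{C-1-3} used in Section~\ref{sec:3} (which keeps inter-particle phase spacing strictly below $\pi$) is what makes the intrinsic geometric derivation of the Kuramoto bonding force admissible.
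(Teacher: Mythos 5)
Your proof is correct and follows essentially the same route as the paper: both verify (i) by substituting the coordinate expressions into the explicit formula \eqref{AA-0} for the logarithm map (you merely make the angle-addition simplification of the numerator explicit, which the paper leaves implicit) and both deduce (ii) from (i) together with $\eqref{AA-1}_2$ and the index swap $i \leftrightarrow j$. No gaps.
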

	 \begin{proof} (i)~We apply \eqref{AA-0} with $\bx = \bx_i, \quad \hat{\bx} = \bx_j$ and use \eqref{AA-1} to find
	 \begin{align*}
	 \begin{aligned}
	\frac{\log_{\bx_i} {\bx}_j}{d_{ji}} &= \frac{{\bx}_j-\langle \bx_i,{\bx}_j\rangle \bx_i}{\sqrt{1-\langle \bx_i, {\bx}_j \rangle^2}} =  \frac{\sin(\theta_j - \theta_i)}{|\sin(\theta_j - \theta_i)|} (-\sin \theta_i, \cos \theta_i) \\
	 & = \mbox{sgn}(\theta_j - \theta_i)  (-\sin \theta_i, \cos \theta_i). 
	  \end{aligned}
	  \end{align*}
	  where we used \eqref{AA-1-1} in the last equality.  
	  
	  \vspace{0.2cm}
	  
	  \noindent (ii)~We use the result of (i) and $\eqref{AA-1}_2$ to find
	  \begin{align}
	  \begin{aligned} \label{AA-1-2}
	 \Big \langle \bv_i,  \frac{\log_{\bx_i}\bx_j}{d_{ji}}   \Big \rangle &= \Big \langle \dot{\theta}_i(-\sin\theta_i,\cos\theta_i),   \mbox{sgn}(\theta_j-\theta_i) (-\sin\theta_i,\cos\theta_i)   \Big   \rangle \\
	   &=  \dot{\theta}_i  \mbox{sgn}(\theta_j-\theta_i)  \Big \langle (-\sin\theta_i,\cos\theta_i),  (-\sin\theta_i,\cos\theta_i)    \Big  \rangle \\
	   &= \dot{\theta}_i  \mbox{sgn}(\theta_j-\theta_i).
	   \end{aligned}
	  \end{align}
	  By symmetry, one has 
	  \begin{equation} \label{AA-1-3}
	   \Big \langle \bv_j,  \frac{\log_{\bx_j}\bx_i}{d_{ji}}   \Big \rangle = \dot{\theta}_j  \mbox{sgn}(\theta_i-\theta_j) = -\dot{\theta}_j  \mbox{sgn}(\theta_j-\theta_i). 
	   \end{equation}
	  Finally, we combine \eqref{AA-1-2} and \eqref{AA-1-3} to get the desired estimate. 
	 \end{proof}
	As in the previous appendix, we set 
	 \[ e_{ij} = d_{ij} - \theta_{ij}^{\infty}, \quad v_{i,j} := - \Big \langle \bv_i,    \frac{\log_{\bx_i}\bx_j}{d_{ji}} \Big \rangle, \quad v_{j,i} := - \Big \langle \bv_j,  \frac{\log_{\bx_j}\bx_i}{d_{ji}} \Big  \rangle.  \]
	Then, one has 
	\[ {\dot e}_{ij} =  v_{i,j} + v_{j,i} =  - \Big \langle \bv_i,    \frac{\log_{\bx_i}\bx_j}{d_{ji}} \Big \rangle- \Big \langle \bv_j,  \frac{\log_{\bx_j}\bx_i}{d_{ji}} \Big  \rangle.   \]
	 Now, we set 
	 \[  f_{ji}  := - \kappa_1 {\dot e}_{ij} - \kappa_2 e_{ij} = \kappa_1 \Big( \Big \langle \bv_i,    \frac{\log_{\bx_i}\bx_j}{d_{ji}} \Big \rangle + \Big \langle \bv_j,  \frac{\log_{\bx_j}\bx_i}{d_{ji}} \Big  \rangle  \Big)  -\kappa_2 ( d_{ij} - \theta_{ij}^{\infty}).   \]
	 Now, we use Lemma \ref{LB-1}  to see
	 \[ f_{ji} = -\kappa_1 ( \dot{\theta}_j  -\dot{\theta}_i)  \mbox{sgn}(\theta_j-\theta_i)  -\kappa_2 ( d_{ij} - \theta_{ij}^{\infty}), \quad   \bn_{ij} = \mbox{sgn}(\theta_i - \theta_j),  \]
	These yield
	 \begin{equation} \label{AA-1-4}
	 \bbf_{ji} = f_{ji} \bn_{ij} = \kappa_1 ( \omega_j  -\omega_i)  + \kappa_2 ( d_{ij} - \theta_{ij}^{\infty})  \mbox{sgn}(\theta_j - \theta_i).  
	 \end{equation}
	 Finally, we combine \eqref{AA-0-0} and \eqref{AA-1-4} to derive  \eqref{Ku-SB}.


\end{document}